\documentclass[a4paper]{article}

\usepackage[english]{babel}

\usepackage[utf8]{inputenc}
\setlength{\parindent}{2em}
\setlength{\parskip}{0.5em}

\usepackage[utf8]{inputenc}
\usepackage{amsmath}
\usepackage{graphicx}
\usepackage{amssymb}
\usepackage{amsthm}
\usepackage{tikz-cd}
\usepackage{mathrsfs}
\usepackage[colorinlistoftodos]{todonotes}
\usepackage{enumitem}
\usepackage{yfonts}
\usepackage{ dsfont }
\usepackage{MnSymbol}
\usepackage{slashed}
\usepackage{bbm}

\newcommand{\Cov}{{\rm Cov}}

\title{Gradient flow in the kernel learning problem}

\author{Yang Li\footnote{Cambridge University, Department of Pure Mathematics and Mathematical Statistics},~~~Feng Ruan\footnote{Northwestern University, Department of Statistics and Data Science}}

\date{\today}
\newtheorem{thm}{Theorem}[section]
\newtheorem{lem}[thm]{Lemma}

\theoremstyle{definition}
\newtheorem{eg}[thm]{Example}

\newtheorem{cor}[thm]{Corollary}

\newtheorem{rmk}{Remark}
\newtheorem{prop}[thm]{Proposition}
\newtheorem{Def}[thm]{Definition}
\newtheorem{Question}{Question}

\newtheorem*{Acknowledgement}{Acknowledgement}

\newcommand{\ie}{\emph{i.e.} }
\newcommand{\cf}{\emph{cf.} }

\newcommand{\R}{\mathbb{R}}
\newcommand{\C}{\mathbb{C}}

\newcommand{\norm}[1]{\left\lVert#1\right\rVert}

\newcommand{\E}{\mathbb{E}}

\DeclareMathOperator{\Hom}{Hom}
\DeclareMathOperator{\End}{End}

\DeclareMathOperator{\Tr}{Tr}

\begin{document}
	\maketitle

	\begin{abstract}
		This is a sequel to our paper `On the kernel learning problem'. We identify a canonical choice of Riemannian gradient flow, to find the stationary points in the kernel learning problem. In the presence of Gaussian noise variables, this flow enjoys the remarkable property of having a continuous family of Lyapunov functionals, and the interpretation is the automatic reduction of noise. 
		
		PS. We include an extensive discussion in the postcript explaining the comparison with the 2-layer neural networks. Readers looking for additional motivations are encouraged to read the postscript immediately following the introduction.
	\end{abstract}

	\section{Introduction}

	\subsection{The kernel ridge regression problem}

	Let $V$ be a real $d$-dimensional  vector space with an inner product $|\cdot|_V^2$, and let $V^*$ denote its dual space. Given a function $k_V: V^*\to \R^+$ with $\int_{V^*} k_V(\omega)d\omega=1$, we can define a Hilbert space of functions,
	\[
	\mathcal{H}=\{  f: V\to \C| \norm{f}_{\mathcal{H}}^2= \int_{V^*} \frac{ |\hat{f}|^2(\omega)}{ k_V(\omega) }d\omega<+\infty\}, 
	\]
	which embeds continuously into $C^0$, and in fact $\norm{f}_{C^0}\leq \norm{f}_{\mathcal{H}}$. Following \cite{Smale}\cite{Aronszajn},
	we say $\mathcal{H}$ is a \emph{reproducing kernel Hilbert space} (RKHS).

	Let $X$ be a random variable taking value in $V\simeq \R^d$, and $Y$ be a complex valued random variable with $\E[|Y|^2]<+\infty$. 
	We consider the following minimization problem, which we will refer to as \emph{kernel ridge regression} \cite{RuanLi}. Let $\lambda>0$ be a small fixed parameter. We define the \emph{loss function}
	\begin{equation}
		I(f, U, \lambda)= \frac{1}{2} \E[ |Y-f(UX)|^2] + \frac{\lambda}{2} \norm{f}_{\mathcal{H}}^2,
	\end{equation}
	depending on the parameter $U\in \End(V)$, and we seek the minimizer for
	\[
	J(U, \lambda)=  \inf_{f\in \mathcal{H}} I(f,U,\lambda).
	\]
	The existence of a unique minimizer among all $f\in \mathcal{H}$ follows from the Riesz representation theorem. By definition, the kernel ridge regression problem is a \emph{linear} problem in infinite dimensions. The minimizer $f_U$ for $I(f,U,\lambda)$ can be viewed as the optimal fit for $Y$ as a function of $X$, given the regularization effect imposed by the Hilbert norm term. By plugging in the zero test function, there is a `trivial upper bound' $J(U,\lambda)\leq \frac{1}{2}\E[|Y|^2]$.

	\subsection{Kernel learning problem}

	When $k_V$ is a \emph{rotationally invariant} function, namely $k_V(\omega)= k(|\omega|_V^2)$ for some positive valued function $k$, the kernel ridge regression problem has the following equivalent formulation. Given any (non-degenerate) inner product $\Sigma$ on $V$, the dual inner product\footnote{In index notation, if the inner product on $V$ is represented by $(g_{ij})$, then the dual inner product is represented by the inverse matrix $(g^{ij})$.} on $V^*$ is denoted as $\Sigma^{-1}$. We then introduce a Hilbert space of functions 
	\[
	\mathcal{H}_\Sigma= \{  F: V\to \C|   \norm{F}_{\mathcal{H}_\Sigma}^2
	=\sqrt{\det\Sigma}\int_{V^*} \frac{|\hat{F}|^2}{ k(|\omega|_{\Sigma^{-1}}^2) } d\omega \},
	\]
	and consider the following minimization problem with respect to $F$,
	\begin{equation}\label{eqn:functionalI2}
		\mathcal{I}(F, \Sigma, \lambda)=  \frac{1}{2} \E[|Y-F(X)|^2] + \frac{\lambda}{2} \norm{F}_{\mathcal{H}_\Sigma}^2,
	\end{equation}
	\[
	\mathcal{J}(\Sigma; \lambda):=\min_{F\in \mathcal{H}_\Sigma }  \frac{1}{2} \E[|Y-F(X)|^2] + \frac{\lambda}{2} \norm{F}_{\mathcal{H}_\Sigma}^2. 
	\]
	In this formulation, the functionals depend only on $\Sigma$, but not on the ambient inner product $|\cdot|_V^2$.

	Now for each $U\in GL(V)$, using the ambient inner product $|\cdot|_V^2$, we can define the inner product $\Sigma$ on $V$ and the dual inner product $\Sigma^{-1}$ by the formula
	\[
	|x|_\Sigma^2= |Ux|^2_V=(x, U^TUx)_V, \quad |\omega|_{\Sigma^{-1}}= \max_{|x|_\Sigma=1} \langle \omega, x\rangle,
	\]
	where $U^T$ denotes the adjoint operator of $U$.
	By a change of variables \cite[Lemma 4.1]{RuanLi}, the two formulations are related by
	\begin{equation}\label{eqn:twominimizationproblems}
		F= f\circ U,\quad   I(f, U, \lambda)= \mathcal{I}(F, \Sigma, \lambda),\quad J(U,\lambda)= \mathcal{J}(\Sigma; \lambda). 
	\end{equation}
	The minimizer $F_\Sigma=f_U\circ U$ gives the optimal fit for $Y$ as a function of a suitable linear transform of $X$, given the regularization effect imposed by the Hilbert norm term.

	We denote $Sym^2_+$ as the space of all non-degenerate inner products on $V$, which is an open submanifold inside the vector space $Sym^2 V^*$. In the \emph{kernel learning problem}, we view $\mathcal{J}(\Sigma;\lambda)$ as a functional on $Sym^2_+$, and we seek the local minimizers with respect to $\Sigma\in Sym^2_+$, referred to as \emph{vacua} in \cite{RuanLi}. We emphasize that unlike the kernel ridge regression problem, the dependence of $\mathcal{J}$ on $\Sigma$ is \emph{nonlinear}.

	Our motivation is that the vacua  reflect two kinds of information: variable selection (which components of $X$ does $Y$ depend on essentially), and scale detection (what are the natural scale parameters in the distribution of $X$).  Instead of simply learning $Y$ as a function of $X$ in an a priori fixed Hilbert space as in the kernel ridge regression problem, kernel learning searches among a family of Hilbert space norms parametrized by $Sym^2_+$, and aims to 
	find the landscape of vacua, and thereby find the most efficient Hilbert space norms adapted to the various regimes of the learning task. More substantial discussions can be found in the companion paper \cite{RuanLi}.

	\begin{rmk}\label{rmk:partialcompactification}
		In \cite{RuanLi}, it was explained that there is a partial compactification of $Sym^2_+$, where the vacua(=local minimizer of $\mathcal{J}(\Sigma; \lambda)$) naturally lives. Morally speaking, this reflects the fact that $\Sigma$ can limit to a degenerate inner product, or it can diverge to infinity while remaining bounded on a subspace. The first situation is relevant for variable selection tasks, and the second situation is sometimes relevant if some marginal distribution of $X$ contains discrete atoms.

		In this paper, for simplicity we only worry about the first possibility. 
		The degenerate inner products are stratified according to the dimension of the null subspace. The open strata are
		\[
		\mathcal{S}_l= \{  \Sigma \text{ is a positive semi-definite inner product on $V$ with rank $l$} \},
		\]
		for $l=0,1,\ldots d-1.$
		Each $\mathcal{S}_l$ is a smooth manifold. The space of positive semi-definite forms on $V$ is
		\begin{equation*}
			Sym^2_{\geq 0}= Sym^2_+\cup \bigcup_{l\leq d-1} \mathcal{S}_l ,
		\end{equation*}
		which is a closed subset of $Sym^2 V^*$. 
	\end{rmk}

	\subsection{The gradient flow}

	This paper is motivated by the \emph{dynamical question} of how to find the vacua of $\mathcal{J}(\Sigma;\lambda)$ algorithmically,  in terms of a \emph{gradient flow on the Riemannian manifold} $Sym^2_+$. We emphasize that the first variation of $\mathcal{J}$ is naturally a 1-form on $Sym^2_+$, and to define the \emph{gradient vector field}, one has to \emph{choose some Riemannian metric} on $Sym_+^2$; this choice is part of the design of the gradient flow. The readers interested in the relevance of this paper to 2-layer neural networks, are encouraged to refer to the postscript for some extra motivations and an informal survey of results.

	The main contribution of this paper is to identify a \emph{canonical choice of Riemannian metric}  designed so that when the random variable $X$ contains many noise Gaussian variable components, the Riemannian gradient flow turns out to possess \emph{a continuous family of Lyapunov functionals}, whose monotonicity reflects the decrease of the influence of the noise. The definition of our gradient flow only requires the input of the covariance matrix of $X$, involves no further additional choice, and is manifestly coordinate free.

	In the rest of the paper, we assume $X$ is a random variable valued in the vector space $V\simeq \R^d$, and $Y$ is a random variable valued in the complex numbers, with $\E[|X|^2]<+\infty$, and $\E[|Y|^2]<+\infty$. Thus we can define the \emph{covariance matrix} of $X$, which in coordinate free language is an inner product on $V^*$,
	\[
	\Cov(X)(\omega,\omega)= \E[\langle \omega, X- \E[X]\rangle^2].
	\]
	The dual inner product on $V$ is the inverse covariance matrix, which canonically gives an inner product on $V$, and we shall identify it with $|\cdot|_V^2$. In index notation, $\Cov(X)$ is represented by the matrix
	\[
	C^{ij}= \E[(X^i-\E X^i)( X^j-\E X^j) ] ,
	\]
	and the inverse covariance matrix is represented by the inverse matrix $C_{ij}$.

	We start with some basic geometry on $Sym^2_+$. At any point $\Sigma\in Sym^2_+$, the tangent space $T_\Sigma Sym^2_+$ is identified as $Sym^2 V^*$ (in index notation, this means a symmetric tensor with two lower indices), and the cotangent space $T_\Sigma^* Sym^2_+$ is identified as $Sym^2 V$ (in index notation, this means a symmetric tensor with two upper indices). We shall prescribe the Riemannian metric on the cotangent spaces first, which would induce a metric on the tangent spaces. Given any cotangent vectors $A, A'\in T_\Sigma^* Sym^2_+$, we define the Riemannian metric $\mathfrak{g}$ by the following contraction of tensors:
	\begin{equation}\label{eqn:Riemannianmetric}
		\mathfrak{g}(A, A')=\frac{1}{2} \Sigma_{ab} C_{cd} A^{ac} A'^{bd} + \frac{1}{2} \Sigma_{ab} C_{cd} A^{bd} A'^{ac}.
	\end{equation}

	The Riemannian metric allows us to identify the differential $D\mathcal{J}(\Sigma;\lambda)$ of the functional $\mathcal{J}$ (which is naturally a \emph{cotangent vector}) as a \emph{tangent vector} on the manifold $Sym^2_+$.
	In the index notation, we can write the gradient as
	\[
	(\nabla^{\mathfrak{g}} \mathcal{J}(\Sigma;\lambda) )_{ab} = \frac{1}{2}\Sigma_{ac} C_{bd} D\mathcal{J}(\Sigma;\lambda)^{cd} + \frac{1}{2}\Sigma_{bd} C_{ac} D\mathcal{J}(\Sigma;\lambda)^{cd} \in Sym^2 V^*.
	\]
	The \emph{downward gradient flow} on $Sym^2_+$ is  a flow of $\Sigma(t)$ according to the dynamics
	\begin{equation}\label{eqn:gradientflow}
		\frac{d}{dt} \Sigma(t)= - \nabla^{\mathfrak{g}} \mathcal{J}(\Sigma;\lambda).
	\end{equation}
	Tautologically the gradient flow will decrease the functional $\mathcal{J}$, namely
	\begin{equation}\label{eqn:Jdecreases}
		\frac{d}{dt} \mathcal{J}(\Sigma) (t)= - |\nabla^{\mathfrak{g}} \mathcal{J}(\Sigma;\lambda)|^2_{\mathfrak{g}}=  -  |D\mathcal{J}|^2_{\mathfrak{g}}.
	\end{equation}

	\begin{rmk}
		The manifold $Sym^2_+ $ carries several well known  Riemannian metrics. First, the open inclusion $Sym^2_+\subset Sym^2V^*$ induces the restriction of the Euclidean metric on $Sym^2V^*$. Second, the manifold $Sym^2_+$ can be identified as the symmetric space $GL_+(d)/SO(d)$, which carries a metric homogeneous under the $GL_+(d)$ action. Our choice of the Riemannian metric $\mathfrak{g}$ is \emph{not} one of these two metrics; we are not aware of its previous appearance in the literature.
	\end{rmk}

	This Riemannian gradient flow has an alternative characterization:

	\begin{thm}
		(See Proposition \ref{prop:Euclideanflow})  Under the map 
		\[
		\pi: \End(V)\to Sym^2_{\geq 0}, \quad U\mapsto \Sigma=U^T U,
		\]
		the \emph{Euclidean gradient flow} on $\End(V)$ for the functional $J(U,\lambda)$
		\begin{equation}\label{eqn:Euclideanflow}
			\frac{dU_a^i}{dt}= - \frac{1}{4} C_{ac} C^{ij} (DJ(U,\lambda))_j^c  =  - \frac{1}{4} C_{ac} C^{ij}  \frac{\partial J(U,\lambda)}{\partial U_j^c}  ,
		\end{equation}
		is a lift of the Riemannian gradient flow.
	\end{thm}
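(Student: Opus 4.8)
The plan is to verify, by a direct chain-rule computation, that the map $\pi$ intertwines the two flows: if $U(t)$ solves the Euclidean flow \eqref{eqn:Euclideanflow}, then $\Sigma(t):=\pi(U(t))=U(t)^TU(t)$ solves the Riemannian gradient flow \eqref{eqn:gradientflow}. Equivalently and pointwise, it is enough to show that $d\pi_U$ carries the Euclidean gradient of $J$ at $U$ to the $\mathfrak{g}$-gradient of $\mathcal{J}$ at $\pi(U)$. Since $\pi$ sends $GL(V)$ into $Sym^2_+$, the curve $\Sigma(t)$ stays in $Sym^2_+$ as long as $U(t)$ is invertible; and since the downstairs equation \eqref{eqn:gradientflow} has unique solutions (differentiability of $\mathcal{J}$ is already implicit in the statement, $D\mathcal{J}$ occurring in \eqref{eqn:gradientflow}; cf. \cite{RuanLi}), the projected curve does not depend on which preimage $U$ of $\Sigma$ one chooses, which is precisely what ``lift'' means here.

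The computation is cleanest in coordinates orthonormal for the inner product $|\cdot|_V^2$ (the inverse covariance), where $C_{ij}=C^{ij}=\delta_{ij}$: then $\pi(U)=U^TU$ in the usual matrix sense, \eqref{eqn:Euclideanflow} becomes $\dot U=-\tfrac14\,\nabla_U J$ with $\nabla_U J$ the matrix of partial derivatives, and the cotangent vector $D\mathcal{J}(\Sigma;\lambda)$ becomes a symmetric matrix which I abbreviate $S$. Using the identity $J(U,\lambda)=\mathcal{J}(\Sigma;\lambda)$ of \eqref{eqn:twominimizationproblems} and differentiating $\Sigma=U^TU$, the chain rule together with $S^T=S$ gives $\nabla_U J=2\,U S$; substituting into \eqref{eqn:Euclideanflow} collapses the factor $\tfrac14$ against this factor $2$ and produces the compact form $\dot U=-\tfrac12\,U S$.

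Differentiating $\Sigma=U^TU$ along this flow, and using $S^T=S$ and $U^TU=\Sigma$,
\[
\frac{d\Sigma}{dt}=\dot U^TU+U^T\dot U=-\tfrac12\,S\,U^TU-\tfrac12\,U^TU\,S=-\tfrac12\,(S\Sigma+\Sigma S),
\]
which in these coordinates is exactly $-(\nabla^{\mathfrak{g}}\mathcal{J}(\Sigma;\lambda))$: the formula for $\nabla^{\mathfrak{g}}\mathcal{J}$ displayed just before \eqref{eqn:gradientflow} reads $(\nabla^{\mathfrak{g}}\mathcal{J})_{ab}=\tfrac12(\Sigma S+S\Sigma)_{ab}$ here, its two terms being the two symmetrization terms of that formula. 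Reinstating the tensors $C,C^{-1}$ by undoing the orthonormal normalization recovers the coordinate-free identity, and since every object in \eqref{eqn:Riemannianmetric}--\eqref{eqn:gradientflow} and in \eqref{eqn:Euclideanflow} is manifestly natural this is legitimate; alternatively one simply repeats the same steps keeping the $C$'s throughout, using $D\mathcal{J}^{ab}=D\mathcal{J}^{ba}$ and $C^{ij}C_{jk}=\delta^i_k$.

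I do not expect a genuine obstacle: the theorem is a bookkeeping verification. The one delicate point is getting the numerical constant and the index placements right — the $\tfrac14$ in \eqref{eqn:Euclideanflow} is calibrated so that, once the chain rule contributes a factor $2$ and the product rule for $\tfrac{d}{dt}(U^TU)$ splits the result into two pieces, each piece lands on the $\tfrac12$-symmetrization built into the definition \eqref{eqn:Riemannianmetric} of $\mathfrak{g}$ (hence into $\nabla^{\mathfrak{g}}$). It is also worth stating explicitly, for the assertion to make sense, that the ``Euclidean metric'' on $\End(V)$ underlying \eqref{eqn:Euclideanflow} is the flat metric determined by $|\cdot|_V^2$ on $V$ and its dual on $V^*$ under $\End(V)\simeq V\otimes V^*$, which is what makes $\tfrac14\,C_{ac}C^{ij}$ a constant multiple of the corresponding inverse metric tensor, and hence \eqref{eqn:Euclideanflow} a bona fide gradient flow.
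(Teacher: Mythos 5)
Your proposal is correct and follows essentially the same route as the paper: differentiate $\Sigma=U^TU$, use the chain rule $DJ=\pi^*D\mathcal{J}$ to convert the Euclidean gradient of $J$ into $2UD\mathcal{J}$ (whence the factor $\tfrac14$ collapses to $\tfrac12$), and match the resulting $-\tfrac12(\Sigma\,D\mathcal{J}+D\mathcal{J}\,\Sigma)$ with the Riemannian gradient. The only difference is presentational — you work in matrix form in an orthonormal basis where the paper keeps the tensors $C_{ij}$, $C^{ij}$ explicit throughout (and the paper itself invites the matrix-notation variant).
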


	This perspective exhibits closer analogy with more popular composition models such as neural networks, as we will discuss in the postscript.

	\subsection{Main theorem}

	We shall prove several foundational properties of the gradient flow (\ref{eqn:gradientflow}).

	\textbf{Assumptions}. The random variables satisfy $\E[|Y|^2]+\E[|X|^8]<+\infty$, and the kernel function  is rotationally invariant, $K(x)=\mathcal{K}(|x|_V^2)$, such that the derivative $\mathcal{K}', \mathcal{K}'': \R_{\geq 0}\to \R$ are bounded continuous functions, and $r\mathcal{K}'(r)$ is bounded for large $r$.

	These conditions guarantee  the first variation formula for $D\mathcal{J}$ (\cf Theorem \ref{thm:firstvariation} below), and that $\mathcal{J}$ extends as a $C^1$ function on $Sym^2_{\geq 0}$, and its first variation $D\mathcal{J}$ extends as a Lipschitz continuous tensor-valued function on $Sym^2 V^*$. 	These conditions are not intended to be optimal.

	\begin{thm}\label{thm:longtimeexistence}
		(See Sections \ref{sect:extensionflow},  \ref{sect:longtimeexistence}) Under the assumptions, 
		\begin{enumerate}
			\item   The Riemannian gradient flow  extends to a flow on  $Sym^2_{\geq 0}$, depending continuously on the initial data, and exists for all time $t\geq 0$.

			\item   If the initial data is contained in some open stratum $\mathcal{S}_l$ for $0\leq l\leq d-1$, then the flow stays inside $\mathcal{S}_l$.

		\end{enumerate}

	\end{thm}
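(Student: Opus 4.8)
The plan is to realize \eqref{eqn:gradientflow} as the flow of a vector field which, thanks to the regularity granted by the Assumptions (via Theorem~\ref{thm:firstvariation}), is locally Lipschitz on the whole ambient vector space $Sym^2 V^*$, and then to run soft a priori estimates. It is convenient to fix a linear identification $V\simeq\R^d$ under which $\Cov(X)=I$; since the construction is coordinate free this loses nothing, and in these coordinates $\Sigma$ is a positive semi-definite symmetric matrix, $G:=D\mathcal{J}(\Sigma;\lambda)\in Sym^2 V$ a symmetric matrix, the cotangent metric \eqref{eqn:Riemannianmetric} reads $\mathfrak{g}(A,A')=\tfrac12\Tr\big(\Sigma(AA'+A'A)\big)$, and the gradient flow becomes
\[
\frac{d}{dt}\Sigma=-\tfrac12\big(\Sigma G+G\Sigma\big)=:\Phi(\Sigma),\qquad G=D\mathcal{J}(\Sigma;\lambda).
\]
Because $D\mathcal{J}$ extends to a Lipschitz tensor-valued function on $Sym^2 V^*$, the field $\Phi$ extends to all of $Sym^2 V^*$ and is Lipschitz on bounded sets; Picard--Lindel\"of then yields a unique local solution through any initial datum, with locally uniform continuous dependence on the initial datum. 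It remains to produce a priori bounds showing the solution never leaves $Sym^2_{\geq0}$, preserves the rank, and exists for all $t\ge0$.

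I would first prove a coercivity (no-blow-up) estimate. Along the flow $\mathcal{J}(\Sigma(t))$ is non-increasing and $\ge 0$, so $\int_0^{T}\big(-\tfrac{d}{dt}\mathcal{J}\big)\,dt\le\mathcal{J}(\Sigma(0))=:E_0$ on the existence interval, and by \eqref{eqn:Jdecreases} one has $-\tfrac{d}{dt}\mathcal{J}=|D\mathcal{J}|_{\mathfrak g}^2=\Tr(\Sigma G^2)$. When $\Sigma(t)\succeq0$, Cauchy--Schwarz for the Frobenius inner product applied to $\Sigma^{1/2}$ and $\Sigma^{1/2}G$ gives
\[
\frac{d}{dt}\Tr\Sigma=-\Tr(\Sigma G)\le\sqrt{\Tr\Sigma}\;\sqrt{\Tr(\Sigma G^2)}=\sqrt{\Tr\Sigma}\;\sqrt{-\tfrac{d}{dt}\mathcal{J}},
\]
hence $\tfrac{d}{dt}\sqrt{\Tr\Sigma}\le\tfrac12\sqrt{-\tfrac{d}{dt}\mathcal{J}}$ and, integrating, $\sqrt{\Tr\Sigma(t)}\le\sqrt{\Tr\Sigma(0)}+\tfrac12\sqrt{E_0\,t}$. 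Thus, as long as $\Sigma(t)\succeq0$, the trajectory remains bounded on every finite time interval, and therefore $\|G(t)\|=\|D\mathcal{J}(\Sigma(t))\|$ remains bounded there too.

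Next I would control, for an initial datum in $\mathcal{S}_l$ (with the convention $\mathcal{S}_d:=Sym^2_+$), the $l$-th eigenvalue $\mu_l(\Sigma(t))$, i.e.\ the smallest nonzero one. For a unit eigenvector $v$ with $\Sigma v=\mu_l v$, first-order eigenvalue perturbation gives $\tfrac{d}{dt}\mu_l=v^\top\dot\Sigma v=-\mu_l\,v^\top G v\ge-\|G\|_{\mathrm{op}}\,\mu_l$ (read via lower Dini derivatives at eigenvalue crossings; the bound is uniform over the eigenspace), whence $\mu_l(\Sigma(t))\ge\mu_l(\Sigma(0))\exp\!\big(-\!\int_0^t\|G\|_{\mathrm{op}}\big)$. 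At the same time, the algebraic core of part (2) is that $\Phi$ is tangent to $\mathcal{S}_l$ along $\mathcal{S}_l$: for $\Sigma\in\mathcal{S}_l$ with $N:=\ker\Sigma$, the tangent space $T_\Sigma\mathcal{S}_l$ consists exactly of the symmetric forms vanishing on $N\times N$ (a Schur-complement parametrization, checked by the matching dimension count $\dim\mathcal{S}_l=l(d-l)+\tfrac12 l(l+1)$), and writing $\Sigma$ in block form for $V=N^\perp\oplus N$ one reads off that the $(N,N)$-block of $\Sigma G+G\Sigma$ vanishes for every symmetric $G$. Hence $\Phi|_{\mathcal{S}_l}$ is a well-defined locally Lipschitz vector field on the embedded manifold $\mathcal{S}_l$; the solution of the ODE on $\mathcal{S}_l$ with this field solves the ambient ODE, so by uniqueness the ambient flow from $\Sigma_0\in\mathcal{S}_l$ coincides with it and stays in $\mathcal{S}_l$ for as long as it exists there. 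I would then close with a maximal-interval bootstrap: on the maximal interval on which the solution stays in $\mathcal{S}_l$ one has $\Sigma(t)\succeq0$, so the coercivity estimate applies and $\Sigma$ is bounded on finite times, hence $\|G\|$ is bounded there, hence $\mu_l(\Sigma(t))$ stays bounded away from $0$ on finite times; consequently on each finite time interval $\Sigma(t)$ lies in a compact subset of $\mathcal{S}_l$ (rank $=l$, eigenvalues bounded and bounded below), so the solution cannot cease to exist in $\mathcal{S}_l$ in finite time, and therefore exists in $\mathcal{S}_l$ for all $t\ge0$. Taking $l=d$ gives that a flow starting in $Sym^2_+$ stays in $Sym^2_+$ and exists for all time; taking $l\le d-1$ gives part (2), and since $\mathcal{S}_l\subset Sym^2_{\geq0}$ this also yields the remaining case of part (1) and shows the flow descends to a flow on $Sym^2_{\geq0}$; continuous dependence on the initial datum is Gr\"onwall for the Lipschitz field $\Phi$.

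The genuine difficulty, I expect, lies entirely in the input I am allowed to assume --- that $D\mathcal{J}$ really extends \emph{Lipschitz-continuously across the degenerate locus} (and to all of $Sym^2 V^*$), which is what the moment hypothesis on $X$ and the regularity of $\mathcal{K}$ buy through the first-variation formula of Theorem~\ref{thm:firstvariation}. Granting it, the arguments above are soft; within this proof the only mildly delicate points are the coupling of the two a priori bounds --- the trace bound needs $\Sigma\succeq0$ while the eigenvalue bound produces $\mu_l>0$, handled by running both on the maximal subinterval on which the solution stays in $\mathcal{S}_l$ --- and the non-smoothness of $\mu_l$, handled by the standard Dini/Courant--Fischer calculus for eigenvalues of a smoothly varying symmetric matrix.
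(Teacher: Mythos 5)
Your proposal is correct, and its skeleton matches the paper's: both arguments rest on the Lipschitz extension of $D\mathcal{J}$ to all of $Sym^2V^*$ (Corollary \ref{cor:Lipextension}), so that the field $\Sigma\mapsto-\tfrac12(\Sigma\, D\mathcal{J}+D\mathcal{J}\,\Sigma)$ is locally Lipschitz, Picard--Lindel\"of for local existence and continuous dependence, tangency of this field to each stratum $\mathcal{S}_l$ for rank preservation, and an a priori bound ruling out finite-time breakdown. Where you genuinely differ is in how the a priori bound is obtained. The paper (Proposition \ref{prop:eigenvalueboundevolution}) controls the logarithmic derivative of $\Sigma(t)(w,w)$ pointwise, using the first variation formula, the boundedness of $\mathcal{K}'$, H\"older/Minkowski, and the trivial bound $\E[|r_\Sigma|^2]\le\E[|Y|^2]$, and then applies a matrix maximum principle to get two-sided exponential eigenvalue control with an explicit rate $\beta_0$ in terms of moments of $(X,Y)$; you instead use only the dissipation identity $-\tfrac{d}{dt}\mathcal{J}=\Tr(D\mathcal{J}\,\Sigma\,D\mathcal{J})$ together with a Frobenius Cauchy--Schwarz inequality to get a linear-in-$t$ (hence stronger, though rate-inexplicit) bound on $\Tr\Sigma$, supplemented by a Gr\"onwall lower bound on the smallest nonzero eigenvalue via the operator norm of $D\mathcal{J}$. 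Your version also makes fully explicit two points the paper's stratum-preservation lemma in Section \ref{sect:extensionflow} treats tersely: that tangency must be upgraded to invariance via intrinsic-versus-ambient ODE uniqueness on the embedded manifold $\mathcal{S}_l$, and that one must separately exclude the rank dropping (exit through the boundary of the stratum) in finite time, which your eigenvalue estimate and maximal-interval bootstrap handle cleanly; your block computation of $T_\Sigma\mathcal{S}_l$ and of the vanishing kernel-kernel block of $\Sigma\,D\mathcal{J}+D\mathcal{J}\,\Sigma$ is the same tangency fact the paper asserts. What the paper's heavier route buys is an explicit exponential growth/decay rate and the eigenvalue-evolution and maximum-principle machinery reused elsewhere (Section \ref{sect:Diagonalpicture} and the de-noising analysis); what your route buys is a softer argument that does not need the quantitative first-variation estimate at all for existence. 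The inputs you assume but do not prove --- the Lipschitz extension of $D\mathcal{J}$ across the degenerate locus and the $C^1$ extension of $\mathcal{J}$ making the energy identity valid on the boundary strata --- are exactly what the paper imports from \cite{RuanLi} via Proposition \ref{prop:continuousextensionJ} and Corollary \ref{cor:Lipextension}, so no gap there.
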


	The next Theorem concerns the convergence behavior as $t\to +\infty$. We are interested in finding stationary points of $\mathcal{J}(\Sigma;\lambda)$. The main subtlety is that the infinite time limit may lie in some boundary stratum $\mathcal{S}_l\subset Sym^2_{\geq 0}$ (See Remark \ref{rmk:partialcompactification}). We shall say that a point $\Sigma_0\in \mathcal{S}_l$ is a \emph{stationary point}, if $D\mathcal{J}(\Sigma_0)$ vanishes on the tangent space of the stratum manifold $\mathcal{S}_l$ at $\Sigma_0$.

	\begin{thm}\label{thm:convergence}(See Section \ref{sect:convergence})
		In the setting of Theorem \ref{thm:longtimeexistence}, suppose further that $X$ is a continuous random variable. Then there is a subsequence of time $t_i\to +\infty$, such that $\Sigma(t_i)$ converge to some $\Sigma_0\in Sym^2_{\geq 0}$, which is a \emph{stationary point}.
	\end{thm}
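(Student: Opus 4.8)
The plan is to get the subsequential limit from a compactness argument powered by the monotonicity of $\mathcal{J}$, and then to recognise the limit as stationary by passing to the limit in the relation $|D\mathcal{J}|_\mathfrak{g}\to 0$. First I would record the easy half: since $\mathcal{I}(\cdot,\Sigma,\lambda)\ge 0$ the functional $\mathcal{J}$ is non-negative, so (\ref{eqn:Jdecreases}) integrates to $\int_0^\infty|D\mathcal{J}(\Sigma(t))|_\mathfrak{g}^2\,dt\le\mathcal{J}(\Sigma(0))<\infty$, and hence there is a sequence $t_i\to\infty$ with $|D\mathcal{J}(\Sigma(t_i))|_\mathfrak{g}^2\to 0$. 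The substantive point is to confine the trajectory to a compact subset of $Sym^2_{\ge 0}$, so that a further subsequence of $\{\Sigma(t_i)\}$ converges.

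For that a priori bound I would prove a coercivity statement: $\mathcal{J}(\Sigma;\lambda)\to\tfrac12\E[|Y|^2]$ (the trivial upper bound) as $\lambda_{\max}(\Sigma)\to\infty$. Assuming first $\mathcal{J}(\Sigma(0))<\tfrac12\E[|Y|^2]$ — otherwise monotonicity forces $\mathcal{J}(\Sigma(t))$ constant, $|D\mathcal{J}(\Sigma(t))|_\mathfrak{g}\equiv 0$, and $\Sigma(0)$ is already stationary — this coercivity together with $\mathcal{J}(\Sigma(t))\le\mathcal{J}(\Sigma(0))$ yields a uniform bound $\lambda_{\max}(\Sigma(t))\le R$, and $\{\Sigma\in Sym^2_{\ge 0}:\lambda_{\max}(\Sigma)\le R\}$ is compact (eigenvalues may still degenerate to $0$, which is allowed), so the trajectory is precompact. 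To prove coercivity I would write $\Sigma=U^TU$; by (\ref{eqn:twominimizationproblems}) the minimizer $F_\Sigma=f_U\circ U$ obeys $\tfrac\lambda2\norm{f_U}_\mathcal{H}^2\le\mathcal{J}\le\tfrac12\E[|Y|^2]$, so $\norm{f_U}_\mathcal{H}\le M:=(\E[|Y|^2]/\lambda)^{1/2}$, and
\[
\mathcal{J}(\Sigma;\lambda)\ \ge\ \tfrac12\E\big[|Y-f_U(UX)|^2\big]\ \ge\ \tfrac12\E[|Y|^2]-\big|\E[\bar Y\,f_U(UX)]\big|.
\]
Expanding $f_U$ through its Fourier transform (which lies in $L^1$, since $\int|\hat f_U|\le\norm{f_U}_\mathcal{H}(\int k_V)^{1/2}=\norm{f_U}_\mathcal{H}$), Fubini gives $\E[\bar Y f_U(UX)]=\int\hat f_U(\omega)\,\phi(U^T\omega)\,d\omega$, where $\phi(\xi)=\E[\bar Y e^{2\pi i\langle\xi,X\rangle}]$ is the Fourier transform of the finite complex measure $\mu(dx)=\E[\bar Y\mid X=x]\,\rho_X(x)\,dx$, which is absolutely continuous with $L^1$ density precisely because $X$ is a continuous random variable — so $\phi(\xi)\to 0$ as $|\xi|\to\infty$ by Riemann–Lebesgue. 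Using $\int|\hat f_U||\phi(U^T\cdot)|\le\norm{f_U}_\mathcal{H}\big(\int|\phi(U^T\omega)|^2 k_V(\omega)\,d\omega\big)^{1/2}$ and dominated convergence — for a.e.\ $\omega$, $|U^T\omega|^2=\omega^TUU^T\omega\ge\lambda_{\max}(\Sigma)\langle\omega,u\rangle^2\to\infty$, with $u$ a leading eigenvector of $UU^T$ (pass to a subsequence so that $u$ converges) — the last integral tends to $0$ uniformly over the $\mathcal{H}$-ball of radius $M$, giving $\mathcal{J}(\Sigma;\lambda)\to\tfrac12\E[|Y|^2]$. A standard subsequence argument upgrades this to a limit over all sequences with $\lambda_{\max}(\Sigma)\to\infty$; the same argument on each $\mathcal{S}_l$ (the corresponding $l$-dimensional problem) covers a flow that runs in a degenerate stratum.

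Granting precompactness, I would pass to a further subsequence with $\Sigma(t_i)\to\Sigma_0\in\mathcal{S}_l$ for some $0\le l\le d$ (writing $\mathcal{S}_d:=Sym^2_+$). Since $D\mathcal{J}$ extends to a continuous tensor field on $Sym^2 V^*$ by the Assumptions and $(\Sigma,A)\mapsto\Sigma_{ab}C_{cd}A^{ac}A^{bd}$ is polynomial, continuity gives $|D\mathcal{J}(\Sigma_0)|^2_{\mathfrak{g}|_{\Sigma_0}}=\lim_i|D\mathcal{J}(\Sigma(t_i))|^2_{\mathfrak{g}|_{\Sigma(t_i)}}=0$. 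It then remains to check a linear-algebra identity: in a basis adapted to $N_0:=\ker\Sigma_0$, if $w^a$ denotes the $a$-th row of $A\in Sym^2 V$ then $\mathfrak{g}|_{\Sigma_0}(A,A)=\sum_{a,b}\Sigma_{0,ab}\langle w^a,w^b\rangle_{C}$, where $\langle\cdot,\cdot\rangle_{C}$ is the positive-definite form $C_{cd}$; because $\Sigma_0$ is positive definite on a complement of $N_0$, this vanishes exactly when $A\in Sym^2 N_0$, which is precisely the annihilator of $T_{\Sigma_0}\mathcal{S}_l=\{h\in Sym^2 V^*:h|_{N_0\times N_0}=0\}$. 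Hence $D\mathcal{J}(\Sigma_0)$ vanishes on $T_{\Sigma_0}\mathcal{S}_l$, i.e.\ $\Sigma_0$ is a stationary point.

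The step I expect to be the main obstacle is the a priori bound. Theorem \ref{thm:longtimeexistence} only rules out blow-up in finite time, and the $\mathfrak{g}$-length of the trajectory is a priori merely $O(\sqrt t)$, so neither forbids escape to infinity; some genuinely global input about the shape of $\mathcal{J}$ near infinity is needed, and the coercivity above supplies it. The hypothesis that $X$ is continuous is used at exactly one place — to make $\phi$ decay at infinity; if $X$ carried atoms, $\phi$ would not decay, which is consistent with the second kind of degeneration flagged in Remark \ref{rmk:partialcompactification}. The remaining points — uniformity over the $\mathcal{H}$-ball, the subsequence extraction of the leading eigenvector, and the identity for $\ker\mathfrak{g}|_{\Sigma_0}$ — are routine.
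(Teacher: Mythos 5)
Your argument has the same skeleton as the paper's: integrate \eqref{eqn:Jdecreases} to get $\int_0^\infty|D\mathcal{J}|_{\mathfrak{g}}^2\,dt<\infty$, confine the trajectory to a compact subset of $Sym^2_{\geq 0}$ via the monotonicity of $\mathcal{J}$ together with the fact that $\mathcal{J}(\Sigma;\lambda)\to\frac12\E[|Y|^2]$ as $\Sigma\to\infty$, extract a convergent subsequence along which $|D\mathcal{J}|_{\mathfrak{g}}^2\to0$, and then show that vanishing of the degenerate quadratic form $A\mapsto\Tr(A\Sigma_0A)$ at $D\mathcal{J}(\Sigma_0)$ forces $D\mathcal{J}(\Sigma_0)$ to annihilate $T_{\Sigma_0}\mathcal{S}_l$. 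The one substantive difference is that the paper simply cites the coercivity statement from \cite[Corollary 6.2]{RuanLi}, whereas you prove it from scratch: the chain ``uniform $\mathcal{H}$-bound on $f_U$, Fourier/Fubini reduction of $\E[\bar Y f_U(UX)]$ to $\int\hat f_U(\omega)\phi(U^T\omega)\,d\omega$, Riemann--Lebesgue decay of $\phi$ (this is exactly where continuity of $X$ enters, in both your proof and the cited one), Cauchy--Schwarz against $k_V$, dominated convergence via the leading eigenvector'' is correct and makes the argument self-contained; it buys an explicit explanation of why atoms in the law of $X$ would break the theorem, consistent with Remark \ref{rmk:partialcompactification}. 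Your final linear-algebra step (kernel of $\mathfrak{g}|_{\Sigma_0}$ equals $Sym^2 N_0$, the annihilator of $T_{\Sigma_0}\mathcal{S}_l$) is equivalent to the paper's route via $\Sigma_0=U_0^TU_0$ and $U_0D\mathcal{J}(\Sigma_0)=0$. One small imprecision: in the edge case $\mathcal{J}(\Sigma(0))=\frac12\E[|Y|^2]$, monotonicity alone does not force $\mathcal{J}(\Sigma(t))$ to be constant; the correct reason is that $\Sigma(0)$ is then a global maximum of $\mathcal{J}$ on the stratum manifold containing it, so $D\mathcal{J}(\Sigma(0))$ vanishes on that stratum's tangent space, the flow velocity is zero, and the flow is constant by uniqueness — the conclusion you draw is still right.
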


	Furthermore, the gradient flow has a remarkable \emph{de-noising effect}. We suppose the vector space $V$ is given a direct sum decomposition \[
	V=W\oplus W',
	\]
	so the random variable $X$ has two components $(X_W, X_{W'})\in W\oplus W'$. We assume that $X_W$ is a \emph{Gaussian vector} $X_W\sim N(\E[X_W], \Cov(X_W)) $  independent of $X_{W'}$, and the random variable $\E[Y|X]$ depends only on $X_{W'}$. The distribution law of $X_{W'}$ is arbitrary, subject to the standing assumption that  $\E[|X|^2]<+\infty$.

	Given any unit vector $w\in W$ with respect to the fixed inner product $|\cdot|_V^2$,  we can compute the norm $|w|_{\Sigma(t)}$ with respect to the evolving inner products $\Sigma(t)$ on $V$, which measures the effect of the Gaussian noise.

	\begin{thm}\label{thm:montonicity}(See Theorem \ref{thm:Gaussiannoisemonotonicity}) In the setting of Theorem \ref{thm:longtimeexistence}, 
		for any unit vector $w\in W$, the functional $|w|_{\Sigma(t)}$ decreases in time along the gradient flow.
	\end{thm}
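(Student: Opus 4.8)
The plan is to differentiate $|w|_{\Sigma(t)}^2=\Sigma(t)_{ab}w^aw^b$ along the flow \eqref{eqn:gradientflow} and reduce the statement to a sign condition on the first variation. Using the formula for $\nabla^{\mathfrak{g}}\mathcal{J}$ and the symmetry $D\mathcal{J}^{cd}=D\mathcal{J}^{dc}$, one computes $\frac{d}{dt}|w|_{\Sigma(t)}^2=-\Sigma_{ac}C_{bd}\,D\mathcal{J}(\Sigma)^{cd}\,w^aw^b$, so it suffices to show $\Sigma_{ac}C_{bd}\,D\mathcal{J}(\Sigma)^{cd}\,w^aw^b\ge 0$ for every $w\in W$ (the unit normalization is irrelevant by scaling, and a flow lying in a boundary stratum $\mathcal{S}_l$ is covered by the $C^1$-extension of $D\mathcal{J}$ to $Sym^2V^*$ guaranteed by the standing assumptions). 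Geometrically this says $\mathcal{J}$ is non-decreasing at $\Sigma$ in the tangent direction $\xi\odot\eta$ with $\xi=\Sigma(w,\cdot)$ and $\eta=w^\flat=C^{-1}(w,\cdot)$; once proven, $t\mapsto|w|_{\Sigma(t)}$ is non-increasing, and these functionals indexed by $w\in W$ are the continuous family of Lyapunov functionals advertised in the introduction.

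Next I would substitute the first variation formula (Theorem \ref{thm:firstvariation}). Since the fitting term $\tfrac12\E[|Y-F(X)|^2]$ does not depend on $\Sigma$, the envelope principle gives $D\mathcal{J}(\Sigma)=\tfrac{\lambda}{2}\,\partial_\Sigma\norm{F_\Sigma}_{\mathcal{H}_\Sigma}^2$, and differentiating the two $\Sigma$-dependences of $\norm{F}_{\mathcal{H}_\Sigma}^2=\sqrt{\det\Sigma}\int_{V^*}|\hat F|^2/k(|\omega|^2_{\Sigma^{-1}})\,d\omega$ produces a term proportional to $(\Sigma^{-1})^{cd}$ (from $\sqrt{\det\Sigma}$) and a frequency integral with weight $k'/k^2$ (from the argument $q(\omega)=|\omega|^2_{\Sigma^{-1}}$). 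Contracting with $\Sigma_{ac}w^a$ collapses $\Sigma^{-1}$ against $w$, and after also contracting with $C_{bd}w^b$ the target inequality becomes
\[
\int_{V^*}\frac{|\hat F_\Sigma|^2(\omega)}{k(q(\omega))}\Big(\tfrac12|w|_V^2\;+\;\tfrac{k'(q(\omega))}{k(q(\omega))}\,\langle\omega,w\rangle\,\langle\omega,v\rangle\Big)\,d\omega\;\ge\;0,\qquad v:=\Sigma^{-1}C^{-1}w .
\]
For the Gaussian kernel $k'/k\equiv-\tfrac12$, and since $\widehat{D_uF}=i\langle\omega,u\rangle\hat F$, this is equivalent to $\langle D_wF_\Sigma,D_vF_\Sigma\rangle_{\mathcal{H}_\Sigma}\le |w|_V^2\,\norm{F_\Sigma}_{\mathcal{H}_\Sigma}^2$, whose content is that the optimal fit $F_\Sigma$ is nearly flat along the noise subspace $W$, so $D_wF_\Sigma$ is small.

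To prove the displayed inequality I would start from the Euler–Lagrange (representer) identity $\lambda F_\Sigma(x)=\E[(Y-F_\Sigma(X))\,\kappa_\Sigma(X,x)]$, where $\kappa_\Sigma$ is the translation-invariant, Gaussian-type reproducing kernel of $\mathcal{H}_\Sigma$. Because $\E[Y\mid X]$ is a function of $X_{W'}$ only and $X_W\sim N(\E[X_W],\Cov(X_W))$ is independent of $X_{W'}$, integrating the $X_W$-variable against $\kappa_\Sigma$ is a Gaussian convolution under which the noise part of $Y$ disappears and the target contributes only its $X_W$-constant component. Expanding the $X_W$-dependence in the Hermite basis adapted to the law of $X_W$, one expects the associated Gaussian integral operator to be diagonalized with the degree-zero mode having the largest eigenvalue, so that the $W$-directional frequency content of $F_\Sigma$ — exactly the quantities $\langle\omega,w\rangle$ reweighted by $|\hat F_\Sigma|^2$ above — is dominated by $|w|_V^2$ times the total $\mathcal{H}_\Sigma$-energy. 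Integrating the resulting differential inequality over $[0,t]$ then gives the monotonicity.

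I expect the main obstacle to be precisely this last step: making rigorous and quantitatively sharp the assertion that the Gaussian noise flattens $F_\Sigma$ in the $W$-directions. No block-diagonal structure of $\Sigma$ is assumed, so $W$ and $W'$ are coupled inside the kernel norm and one cannot separate variables; the argument must be run on the joint representer equation, tracking how the Gaussian convolution in $X_W$ interacts with the arbitrary law of $X_{W'}$ and with the anisotropic kernel, and it must deliver the estimate in the exact bilinear form $\langle\omega,w\rangle\langle\omega,v\rangle$ with $v=\Sigma^{-1}C^{-1}w$ dictated by the metric $\mathfrak{g}$ — it is the alignment of this particular $v$ with $w$, forced by the choice of $\mathfrak{g}$, that makes the correction term absorbable by $\tfrac12|w|_V^2$. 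The remaining points — validity of differentiation under the integral and of the envelope identity under the stated moment and kernel-regularity conditions, and continuity of $D\mathcal{J}$ up to $Sym^2_{\ge0}$ for flows in the strata $\mathcal{S}_l$ — are routine given Theorems \ref{thm:firstvariation} and \ref{thm:longtimeexistence}.
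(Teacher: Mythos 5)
Your reduction of the monotonicity statement to the sign condition $\Sigma_{ac}C_{bd}\,D\mathcal{J}^{cd}w^aw^b\ge 0$ is correct and matches the paper's starting point (there phrased via the time-evolution formula for $\Tr_{V_1}\Sigma$, Proposition \ref{lem:timeevolution}). But the heart of the theorem is exactly the step you defer to the end, and the route you sketch for it does not go through as stated. After substituting the envelope form of $D\mathcal{J}$ you arrive at a frequency-space inequality in which the signed term $\tfrac{k'}{k}\langle\omega,w\rangle\langle\omega,v\rangle$ must be absorbed by $\tfrac12|w|_V^2$; nothing in the optimality of $F_\Sigma$ alone forces this, and your proposed mechanism (Hermite diagonalization of a Gaussian integral operator attached to the representer equation) is not carried out, would at best be adapted to the Gaussian kernel even though the theorem covers general rotationally invariant kernels such as the Sobolev ones, and does not explain how the arbitrary law of $X_{W'}$ and the off-diagonal coupling inside $\Sigma$ are handled. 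So the proposal has a genuine gap at its central step.

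The paper's argument avoids any pointwise analysis of $\hat F_\Sigma$ by a variational comparison. One first uses the $O(V)$ freedom in the factorization $\Sigma=U^TU$ to take $U$ upper triangular (Gram--Schmidt) with respect to the splitting $V=\R w\oplus(W_1\oplus W')$, so that $|w|^2_\Sigma=U_{11}^2$ and, by Proposition \ref{lem:timeevolution}, $\frac{d}{dt}|w|^2_\Sigma$ is the directional derivative of $J(U,\lambda)$ along $\tfrac12 U_{11}\partial_{U_{11}}$. Then, for the perturbed matrix $U_s$ with $U_{11}\mapsto\sqrt{1-s}\,U_{11}$, one exhibits an explicit competitor $H_sf_U$, a Gaussian smoothing of the minimizer in the noise coordinate: Lemma \ref{lem:convolution1} shows the fitting term does not increase --- this is where the infinite divisibility of the Gaussian $X_1$, its independence from $X'$, and $\E[Y|X]=\E[Y|X']$ enter, via $X_1=Z_1+Z_2$ in law and Jensen's inequality --- while Lemma \ref{lem:convolution2} shows the RKHS norm strictly contracts by an explicit Fourier multiplier. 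This yields $J(U_s,\lambda)\le J(U,\lambda)-\tfrac{\lambda}{2}\int_{V^*}(1-e^{-4\pi^2\alpha^2U_{11}^2\omega_1^2 s})|\hat f_U|^2/k_V\,d\omega$, and letting $s\to0^+$ gives the quantitative differential inequality (\ref{eqn:monotonicity}). If you want to complete your proposal, this competitor-function argument is the missing idea; the frequency-space inequality you wrote down then comes out as a consequence rather than something to be proven directly.
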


	In other words, the flow possesses a continuous family of Lyapunov functionals, which is a very strong property on an ODE system. This holds without any parametric assumptions on the distribution of $Y$ or the signal variables $X_{W'}$, beyond the moment bounds which guarantee the existence of the flows. 
	In this sense we view the choice of the Riemannian metric as optimal.

	\section{Background: RKHS and gradient formula}

	\subsection{Reproducing kernel function}

	We recall some basics of reproducing kernel Hilbert spaces (See \cite{RuanLi} for more details). 
	We define the kernel function
	\begin{equation}
		K(x)= \int_{V^*} e^{2\pi i \langle x, \omega\rangle} k_V(\omega) d\omega,
	\end{equation}
	and let $K(x, y)= K(x-y)$, so in particular $K(x,y)=\overline{K(y,x)}$. The functions $K(x,\cdot)$ lie in $\mathcal{H}$, and have Fourier transform $k_V(\omega) e^{2\pi i \langle \omega, x\rangle}$. These functions are known as reproducing kernels. 
	Each $f\in \mathcal{H}$ can be represented as 
	\begin{equation}\label{reproducingformula}
		f(x)= \langle f, K(x,\cdot)\rangle_{\mathcal{H}}.
	\end{equation}

	\begin{eg}\label{eg:Gaussiankernel}
		The Gaussian kernel corresponds to the case that $K(x)=e^{-\beta |x|_V^2}$ for some $\beta>0$. By taking the Fourier transform,
		\[
		k_V(\omega)= (\pi \beta^{-1})^{d/2} e^{- \pi^2 
			\beta^{-1} |
			\omega|^2}.
		\]
	\end{eg}
	
	\begin{eg}\label{eg:Sobolevkernel}
		(Sobolev kernel \cite[Section 4.5]{RuanLi}) The choice
		\[
		k_V(\omega)=k_{d,\gamma}(|\omega|^2)=\frac{(4\pi)^{d/2}\Gamma(\gamma+d/2 )}{  \Gamma(\gamma) } (1+|2\pi \omega|^2)^{-\gamma-d/2},
		\]
		which defines the Sobolev spaces $H^{d/2+\gamma}(\R^d)$. The $L^1$-integrability on $k_V$ is equivalent to $\gamma>0$. The Gamma function factors here appear as normalization constants. The reproducing kernel function is
		\[
		K(x)=  K_\gamma(|x|^2)=   \frac{  1  }{\Gamma(\gamma)} \int_0^{+\infty} y^{\gamma-1} e^{-y} e^{- \frac{ |x|^2}{4y} }dy .
		\]
		This depends only on $\gamma>0$, but not on $d$.
	\end{eg}

	\subsection{First variation formula}

	We are interested in how $\mathcal{J}(\Sigma;\lambda)$ varies with $\Sigma\in Sym^2_+$. The following first variation formula is proved in \cite[Section 4.2]{RuanLi}, and applies to a wide range of rotational invariant kernels, such as the Sobolev kernel and the Gaussian kernel.

	\begin{thm}(First variation)\label{thm:firstvariation}
		We denote $r_\Sigma(X,Y)=Y-F_\Sigma(X)= Y-f_U(UX)$, and let $(X',Y')$ be an independent copy of $(X,Y)$. 
		We suppose $\E[|Y|^2]<+\infty$, and the kernel function is $K(x)=\mathcal{K}(|x|^2_V)$ for some radial function
		$\mathcal{K}:\R_{\geq 0}\to \R_+$, such that {(i)} $\mathcal{K}'(r)$ is continuous on $\R_+$, and $r\mathcal{K}'(r)\to 0$ as $r\to 0$, (ii) either $|\mathcal{K}'(r) r|$ is bounded as $r \to \infty$ or $|\mathcal{K}'(r)|$ is monotone decreasing for large enough $r$, and (iii) the integrability hypothesis holds:
		\[
		\E[ |\mathcal{K}'(|X-X'|^2_V)|^2 |X-X'|_V^4   ] <+\infty.
		\]
		Then at any $\Sigma\in Sym^2_+$, 
		\begin{equation*}\label{eqn:Jfirstvariation3}
			\begin{split}
				D_\Sigma \mathcal{J}(\Sigma, \lambda)= &   -\frac{1}{2\lambda}   \E[  r_\Sigma(X,Y) \overline{ r_\Sigma(X',Y')}  D_\Sigma \mathcal{K}(|X-X'|_{\Sigma}^2) ] 
				\\
				= &   -\frac{1}{2\lambda}   \E[  r_\Sigma(X,Y) \overline{r_\Sigma(X',Y')}  \mathcal{K}'(|X-X'|_{\Sigma}^2) (X-X')\otimes (X-X') ] .
			\end{split}
		\end{equation*}
		
	\end{thm}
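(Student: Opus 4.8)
The plan is to collapse $\mathcal{J}(\Sigma;\lambda)$ to a closed form involving the resolvent of a kernel integral operator, and then differentiate that resolvent; the hypotheses on $\mathcal{K}$ will be used only in the second step, to legitimize differentiating under the expectation.

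First I would record the \emph{population representer theorem}. Since $F\mapsto\mathcal{I}(F,\Sigma,\lambda)$ is strictly convex, the minimizer $F_\Sigma$ is unique and characterized by its Euler--Lagrange equation; combining this with the reproducing identity $G(x)=\langle G,K_\Sigma(x,\cdot)\rangle_{\mathcal{H}_\Sigma}$ and the fact that the reproducing kernel of $\mathcal{H}_\Sigma$ is $K_\Sigma(x,y)=\mathcal{K}(|x-y|_\Sigma^2)$ (its Fourier transform is $\sqrt{\det\Sigma}^{\,-1}k(|\omega|_{\Sigma^{-1}}^2)$, and inverting recovers $\mathcal{K}(|\cdot|_\Sigma^2)$), one obtains
\[
\lambda F_\Sigma(x)=\E\bigl[r_\Sigma(X,Y)\,\mathcal{K}(|X-x|_\Sigma^2)\bigr],\qquad r_\Sigma(X,Y)=Y-F_\Sigma(X).
\]
Let $T_\Sigma$ be the integral operator on $L^2(\mathrm{law}(X,Y))$ acting through the $X$-variable, $(T_\Sigma u)(x)=\E_{(X',Y')}[u(X',Y')\,\mathcal{K}(|x-X'|_\Sigma^2)]$; it is self-adjoint, positive semidefinite (the kernel $\mathcal{K}(|x-x'|_\Sigma^2)$ is positive definite), and bounded with $\|T_\Sigma\|\le\mathcal{K}(0)$. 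The representer formula says precisely that $u:=r_\Sigma$ solves $(\lambda I+T_\Sigma)u=\lambda Y$. Substituting $F_\Sigma(X)=Y-u$ back into $\mathcal{I}(F_\Sigma,\Sigma,\lambda)$ and using the formula once more to rewrite $\|F_\Sigma\|_{\mathcal{H}_\Sigma}^2$ collapses the loss to
\[
\mathcal{J}(\Sigma;\lambda)=\tfrac12\bigl\langle Y,\,(I+\tfrac1\lambda T_\Sigma)^{-1}Y\bigr\rangle_{L^2},
\]
which is well defined because $\E[|Y|^2]<\infty$ and $I+\tfrac1\lambda T_\Sigma\ge I$.

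Next I would show $\Sigma\mapsto T_\Sigma$ is $C^1$ from $Sym^2_+$ into the bounded operators on $L^2$, with derivative $D_\Sigma T_\Sigma[\delta\Sigma]$ the integral operator with kernel $\mathcal{K}'(|x-x'|_\Sigma^2)\,\langle\delta\Sigma,(x-x')\otimes(x-x')\rangle$, obtained by differentiating $\mathcal{K}(|x-x'|_\Sigma^2)$ in $\Sigma$ under the expectation. The hypotheses enter here: continuity and boundedness of $\mathcal{K}',\mathcal{K}''$ control the first- and second-order increments, hypothesis (ii) ($r\mathcal{K}'(r)$ bounded for large $r$, or $|\mathcal{K}'|$ eventually monotone) tames the factor $\mathcal{K}'(\cdot)\,|x-x'|^2$ produced when a derivative passes through $\mathcal{K}(|x-x'|_\Sigma^2)$, and the integrability hypothesis (iii) makes $D_\Sigma T_\Sigma[\delta\Sigma]$ Hilbert--Schmidt --- in particular bounded --- with norm locally uniform in $\Sigma$. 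Implicit differentiation of $(\lambda I+T_\Sigma)u_\Sigma=\lambda Y$ then gives $\partial_\Sigma u_\Sigma[\delta\Sigma]=-(\lambda I+T_\Sigma)^{-1}\bigl(D_\Sigma T_\Sigma[\delta\Sigma]\bigr)u_\Sigma$ as a map into $L^2$, and in particular $\mathcal{J}(\,\cdot\,;\lambda)$ is $C^1$ on $Sym^2_+$. Differentiating the closed form, using $\frac{d}{ds}(I+\tfrac1\lambda T_{\Sigma+s\delta\Sigma})^{-1}\big|_{s=0}=-(I+\tfrac1\lambda T_\Sigma)^{-1}\cdot\tfrac1\lambda D_\Sigma T_\Sigma[\delta\Sigma]\cdot(I+\tfrac1\lambda T_\Sigma)^{-1}$ together with self-adjointness of the resolvent (which moves each factor $(I+\tfrac1\lambda T_\Sigma)^{-1}$ onto $Y$, producing $u=r_\Sigma$), one gets
\[
D_\Sigma\mathcal{J}(\Sigma;\lambda)[\delta\Sigma]=-\tfrac1{2\lambda}\bigl\langle u,\,(D_\Sigma T_\Sigma[\delta\Sigma])u\bigr\rangle_{L^2},
\]
which unfolds, after relabeling the independent copy (legitimate since $(X,Y),(X',Y')$ are i.i.d.), to
\[
D_\Sigma\mathcal{J}(\Sigma;\lambda)[\delta\Sigma]=-\tfrac1{2\lambda}\,\E\bigl[\,r_\Sigma(X,Y)\,\overline{r_\Sigma(X',Y')}\,\mathcal{K}'(|X-X'|_\Sigma^2)\,\langle\delta\Sigma,(X-X')\otimes(X-X')\rangle\,\bigr].
\]
Reading off the cotangent vector in $Sym^2V$ paired against $\delta\Sigma$ gives the second displayed line of the theorem (the integrand is real, being invariant under $(X,Y)\leftrightarrow(X',Y')$ up to complex conjugation); the first line is the same statement, since $D_\Sigma\bigl[\mathcal{K}(|X-X'|_\Sigma^2)\bigr]=\mathcal{K}'(|X-X'|_\Sigma^2)\,(X-X')\otimes(X-X')$, the differential of the quadratic form $\Sigma\mapsto\Sigma(v,v)$ being $v\otimes v$.

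The main obstacle is the second step: the genuine $C^1$-dependence $\Sigma\mapsto T_\Sigma$ and, equivalently, the interchange of $\partial_\Sigma$ with the expectation in $\E[\,\cdot\,\mathcal{K}(|X-X'|_\Sigma^2)]$. This is a uniform-integrability problem, and it is precisely what hypotheses (i)--(iii) on the smoothness and decay of $\mathcal{K}$ are designed to resolve; granting it, the representer theorem and closed form are convex-analytic bookkeeping and the final differentiation is elementary resolvent calculus.
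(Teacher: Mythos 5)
Your reduction is sound as far as it goes: the population representer identity $\lambda F_\Sigma(x)=\E[r_\Sigma(X,Y)\mathcal{K}(|X-x|_\Sigma^2)]$, the identification of the reproducing kernel of $\mathcal{H}_\Sigma$ with $\mathcal{K}(|x-y|_\Sigma^2)$, the closed form $\mathcal{J}(\Sigma;\lambda)=\tfrac12\langle Y,(I+\tfrac1\lambda T_\Sigma)^{-1}Y\rangle_{L^2}$, and the resolvent differentiation all check out, and the formula you land on is the one in the theorem. Note that the present paper does not prove this statement; it quotes it from the companion paper, whose argument likewise rests on the population representer theorem, so in spirit your route is the expected one, with the added virtue that working on $L^2(\mathrm{law}(X,Y))$ sidesteps the $\Sigma$-dependence of the Hilbert space $\mathcal{H}_\Sigma$ and any envelope-theorem bookkeeping.

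The genuine gap is exactly where you flag ``the main obstacle'': the $C^1$-dependence of $\Sigma\mapsto T_\Sigma$ in operator norm, i.e.\ the justification of differentiating $\E[\,\cdot\,\mathcal{K}(|X-X'|_\Sigma^2)]$ under the expectation, is the entire analytic content of the theorem, and you do not carry it out. Worse, the tools you invoke for it are not available under the stated hypotheses: the theorem never assumes $\mathcal{K}''$ exists or is bounded, and it does not assume $\mathcal{K}'$ is bounded --- hypothesis (i) only demands continuity on $\R_+$ with $r\mathcal{K}'(r)\to0$ as $r\to0$, which deliberately admits kernels (e.g.\ Sobolev kernels with small $\gamma$) whose $\mathcal{K}'$ blows up at the origin. (Boundedness of $\mathcal{K}'$ and $\mathcal{K}''$ belongs to the standing Assumptions of this paper and to Corollary \ref{cor:Lipextension}, which serve the later Lipschitz extension, not to Theorem \ref{thm:firstvariation}.) So the remainder estimate for $T_{\Sigma+\delta\Sigma}-T_\Sigma-D_\Sigma T_\Sigma[\delta\Sigma]$ must be run with a mean-value/dominated-convergence argument using only (i)--(iii), including the branch of (ii) where $|\mathcal{K}'|$ is merely eventually monotone. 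A related unaddressed point: hypothesis (iii) is stated in the fixed norm $|\cdot|_V$, while your Hilbert--Schmidt bound and the dominating functions need the analogous integrability at $|\cdot|_\Sigma$ (and at the intermediate metrics $\Sigma+s\,\delta\Sigma$), so one must transfer (iii) across equivalent norms via (i)--(ii); under the monotonicity branch this is a genuine comparison argument, not a relabeling. Until that step is supplied under the actual hypotheses, the proposal is an outline of the easy algebraic half of the proof rather than a proof.
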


	\begin{Def}\label{def:C1ext}
		We say $\mathcal{J}(\Sigma;\lambda)$ extends to a $C^1$-function on $Sym^2_{\geq 0}$, if there is a differential $D\mathcal{J}$ on $Sym^2_{\geq 0}$ which is a $Sym^2 V$-valued  continuous function, such that at any $\Sigma,\Sigma'\in Sym^2_{\geq 0}$, we have
		\[
		\mathcal{J}(\Sigma',\lambda)= \mathcal{J}(\Sigma, \lambda)+ \langle D\mathcal{J}(\Sigma;\lambda),\Sigma'-\Sigma\rangle +o(|\Sigma'-\Sigma|).
		\]
	\end{Def}

	The following criterion for continuous extension is a special case of \cite[Proposition 4.7]{RuanLi} (See also \cite[Example 4.8]{RuanLi}). It applies to Sobolev kernels with $\gamma>1$, and to all Gaussian kernels.

	\begin{prop}\label{prop:continuousextensionJ}
		Suppose that $\mathcal{K}':\R_{\geq 0}\to \R$ is a bounded continuous function, $r\mathcal{K}'(r)$ is bounded for large $r$, and $\E[|X|^4]+ \E[|Y|^2]<+\infty$. 
		Then
		$\mathcal{J}(\Sigma;\lambda)$ extends to a $C^1$-function on  $Sym^2_{\geq 0}$.
	\end{prop}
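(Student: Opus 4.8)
The plan is to produce the extension explicitly through the dual, operator-theoretic form of kernel ridge regression and then reduce everything to resolvent perturbation theory. Writing the candidate minimiser as $F(x)=\E_{X'}[\alpha(X')K_\Sigma(x,X')]$ with $K_\Sigma(x,x')=\mathcal{K}(|x-x'|_\Sigma^2)$, the stationarity equation becomes $T_\Sigma[(T_\Sigma+\lambda I)\alpha-Y]=0$, where $T_\Sigma$ is the integral operator on the complex $L^2$-space $L^2(X)$ of the law of $X$ with kernel $K_\Sigma$; solving on $\overline{\mathrm{ran}}\,T_\Sigma$ gives $\alpha_\Sigma=(T_\Sigma+\lambda I)^{-1}Y$, residual $r_\Sigma=\lambda(T_\Sigma+\lambda I)^{-1}Y$, and value
\[
\mathcal{J}(\Sigma;\lambda)=\frac{\lambda}{2}\,\big\langle Y,\,(T_\Sigma+\lambda I)^{-1}Y\big\rangle_{L^2(X)},\qquad \Sigma\in Sym^2_+ .
\]
(This is the population version of the representer theorem and is implicit in \cite{RuanLi}.) I would then take the right-hand side as the \emph{definition} of an extension $\widetilde{\mathcal{J}}$ to a neighbourhood of $Sym^2_{\geq 0}$ and prove it is $C^1$; since $\widetilde{\mathcal{J}}=\mathcal{J}$ on $Sym^2_+$ this gives the claim. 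Two preliminary facts: $\mathcal{K}$ is bounded, because $|K(x)|\le\int_{V^*}k_V=1$ forces $\sup_r|\mathcal{K}(r)|\le 1$; and $\norm{F_\Sigma}_{L^\infty}\le\norm{\mathcal{K}}_\infty\norm{Y}_{L^2}/\lambda$ uniformly in $\Sigma$, by Cauchy--Schwarz in $F_\Sigma(x)=\E_{X'}[K_\Sigma(x,X')\alpha_\Sigma(X')]$ together with $\norm{\alpha_\Sigma}_{L^2}\le\norm{Y}_{L^2}/\lambda$, so $|r_\Sigma|\le|Y|+C$ with $C$ uniform.

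The analytic heart is that $\Sigma\mapsto T_\Sigma$ extends to a $C^1$ map from the whole vector space $Sym^2V^*$ into the Hilbert--Schmidt operators on $L^2(X)$. The inputs are: boundedness of $\mathcal{K}$; the Lipschitz bound $|\mathcal{K}(a)-\mathcal{K}(b)|\le\norm{\mathcal{K}'}_\infty|a-b|$; the bilinear identity $|x-x'|^2_{\Sigma_1}-|x-x'|^2_{\Sigma_2}=\langle\Sigma_1-\Sigma_2,(x-x')\otimes(x-x')\rangle$, which yields
\[
\norm{T_{\Sigma_1}-T_{\Sigma_2}}_{HS}^2\le\norm{\mathcal{K}'}_\infty^2\,|\Sigma_1-\Sigma_2|^2\,\E\big[\,|X-X'|^4\,\big];
\]
and the moment hypothesis $\E[|X|^4]<\infty$ to make this finite. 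The directional derivative of $T_\Sigma$ in a direction $H\in Sym^2V^*$ is the Hilbert--Schmidt operator $\dot T_\Sigma[H]$ with kernel $\mathcal{K}'(|x-x'|_\Sigma^2)\langle H,(x-x')\otimes(x-x')\rangle$, and dominated convergence (dominating function $\norm{\mathcal{K}'}_\infty|X-X'|^2\in L^2$) shows $\Sigma\mapsto\dot T_\Sigma$ is continuous; thus $\Sigma\mapsto T_\Sigma$ is $C^1$. The hypotheses that $\mathcal{K}'$ and $r\mathcal{K}'(r)$ are bounded enter exactly here, to supply these uniform-in-$\Sigma$ Hilbert--Schmidt bounds.

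Finally I would use positivity to control the resolvent up to and across the degenerate locus. For $\Sigma\in Sym^2_{\geq 0}$ the seminorm $|\cdot|_\Sigma$ factors through a norm on $V/\Ker\Sigma$, so $K_\Sigma$ is a positive-semidefinite kernel (a positive-definite function precomposed with a linear map); hence $T_\Sigma\ge 0$ and $\norm{(T_\Sigma+\lambda I)^{-1}}\le 1/\lambda$. By the Lipschitz bound above, $\norm{T_\Sigma-T_{\Sigma_0}}<\lambda/2$ on a ball around any $\Sigma_0\in Sym^2_{\geq 0}$, so $(T_\Sigma+\lambda I)^{-1}$ stays bounded on a neighbourhood of $Sym^2_{\geq 0}$ inside $Sym^2V^*$; resolvent calculus then makes $\Sigma\mapsto(T_\Sigma+\lambda I)^{-1}$ a $C^1$ map there, with derivative $H\mapsto-(T_\Sigma+\lambda I)^{-1}\dot T_\Sigma[H](T_\Sigma+\lambda I)^{-1}$. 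Composing with the smooth quadratic $A\mapsto\frac{\lambda}{2}\langle Y,AY\rangle$ shows $\widetilde{\mathcal{J}}$ is $C^1$ on that neighbourhood, with
\[
D\mathcal{J}(\Sigma)[H]=-\tfrac{\lambda}{2}\big\langle(T_\Sigma+\lambda I)^{-1}Y,\ \dot T_\Sigma[H]\,(T_\Sigma+\lambda I)^{-1}Y\big\rangle=-\tfrac{1}{2\lambda}\,\E\big[\,r_\Sigma(X,Y)\overline{r_\Sigma(X',Y')}\,\mathcal{K}'(|X-X'|_\Sigma^2)\langle H,(X-X')\otimes(X-X')\rangle\,\big],
\]
after substituting $r_\Sigma=\lambda(T_\Sigma+\lambda I)^{-1}Y$; this agrees with the first variation of Theorem \ref{thm:firstvariation} on $Sym^2_+$, exhibits $D\mathcal{J}$ as a continuous $Sym^2V$-valued function on $Sym^2_{\geq 0}$, and the expansion of Definition \ref{def:C1ext} is immediate since $\widetilde{\mathcal{J}}$ is genuinely $C^1$ on an open set containing $Sym^2_{\geq 0}$.

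I expect the main obstacle to be the uniformity at the boundary $\partial Sym^2_{\geq 0}$: one must ensure $(T_\Sigma+\lambda I)^{-1}$ does not blow up as $\Sigma$ degenerates --- this is precisely where positive-definiteness of the limiting kernel is used --- and that the Hilbert--Schmidt estimates for $T_\Sigma$ and $\dot T_\Sigma$ are uniform near points of $\partial Sym^2_{\geq 0}$, which is the role of $\E[|X|^4]<\infty$ together with the boundedness of $\mathcal{K}'$ and $r\mathcal{K}'(r)$. A more hands-on alternative, closer to \cite[Proposition 4.7]{RuanLi} and avoiding the operator identity, is to prove directly that $\Sigma\mapsto r_\Sigma$ is $L^2$-continuous up to the boundary (a stability estimate for the strictly convex functional $\mathcal{I}(\cdot,\Sigma,\lambda)$), deduce continuity of $D\mathcal{J}$ from its first-variation formula using the uniform $L^\infty$ bound on $F_\Sigma$ and dominated convergence, and then obtain $\mathcal{J}(\Sigma')=\mathcal{J}(\Sigma)+\langle D\mathcal{J}(\Sigma),\Sigma'-\Sigma\rangle+o(|\Sigma'-\Sigma|)$ by the fundamental theorem of calculus along the segment $s\mapsto(1-s)\Sigma+s\Sigma'$, which stays inside the convex set $Sym^2_{\geq 0}$.
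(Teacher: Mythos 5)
Your proposal is essentially correct, but it follows a genuinely different route from the paper, which does not prove this proposition at all: it simply quotes \cite[Proposition 4.7]{RuanLi}, where the argument is (in outline) the ``hands-on alternative'' you sketch at the end --- $L^2$-stability of the residual $r_\Sigma$ up to the boundary, dominated convergence in the first-variation formula, and the fundamental theorem of calculus along segments in the convex set $Sym^2_{\geq 0}$. Your main line of attack instead packages everything into the resolvent identity $\mathcal{J}(\Sigma;\lambda)=\tfrac{\lambda}{2}\langle g,(T_\Sigma+\lambda I)^{-1}g\rangle+\text{const}$ and reduces the whole proposition to showing $\Sigma\mapsto T_\Sigma$ is $C^1$ into Hilbert--Schmidt operators, which is where the hypotheses $\norm{\mathcal{K}'}_\infty<\infty$ and $\E[|X|^4]<\infty$ enter cleanly; this is more structural, makes the uniform resolvent bound $\norm{(T_\Sigma+\lambda I)^{-1}}\leq 1/\lambda$ (from positive semidefiniteness of $K_\Sigma$ even for degenerate $\Sigma$) do the work that the stability estimate does in \cite{RuanLi}, and recovers the first-variation formula of Theorem \ref{thm:firstvariation} as a byproduct.

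Two points need tightening, though neither is fatal. First, $Y$ is not an element of $L^2$ of the law of $X$, so the resolvent formula should be applied to $g=\E[Y|X]$ with the $\Sigma$-independent constant $\tfrac{1}{2}\E[\mathrm{Var}(Y|X)]$ added; this does not affect differentiability, and the cross terms involving $Y-g$ drop out of the final expectation by conditional independence, so the displayed identity for $D\mathcal{J}$ survives. Second, $\mathcal{K}$ is only defined on $\R_{\geq 0}$, so $T_\Sigma$ is not defined for indefinite $\Sigma$ and your claim of a $C^1$ extension to all of $Sym^2V^*$ (and to a neighbourhood of $Sym^2_{\geq 0}$) is not literally meaningful as stated; either extend $\mathcal{K}$ to a bounded $C^1$ function on $\R$ first, or note that Definition \ref{def:C1ext} only requires the expansion for $\Sigma,\Sigma'\in Sym^2_{\geq 0}$, so you can stay inside the convex cone, where positivity already gives the resolvent bound without any perturbation argument. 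With those repairs the proof goes through.
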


	The following Corollary is needed for applying ODE existence results.

	\begin{cor}\label{cor:Lipextension}\cite[Corollary 4.10]{RuanLi}
		Suppose furthermore that $\mathcal{K}''$ is bounded, and $\E[|X|^8]<+\infty$. Then  $D\mathcal{J}$ extends to a Lipschitz continuous tensor valued function on $Sym^2 V^*$.
	\end{cor}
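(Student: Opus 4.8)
The plan is to upgrade the first-variation formula of Theorem~\ref{thm:firstvariation} into a \emph{uniform} Lipschitz estimate
\[
\bigl|D\mathcal J(\Sigma;\lambda) - D\mathcal J(\Sigma';\lambda)\bigr| \le C\,|\Sigma - \Sigma'|, \qquad \Sigma,\Sigma'\in Sym^2_+,
\]
with $C$ depending only on $\lambda$, $\norm{\mathcal K'}_\infty$, $\norm{\mathcal K''}_\infty$ and finitely many moments of $(X,Y)$. Since $Sym^2_+$ is dense in $Sym^2_{\geq 0}$ and $D\mathcal J$ already extends continuously there by Proposition~\ref{prop:continuousextensionJ}, the bound passes to $Sym^2_{\geq 0}$; and because $Sym^2_{\geq 0}$ is a closed subset of the finite-dimensional vector space $Sym^2 V^*$, a coordinatewise McShane extension then produces a globally Lipschitz $Sym^2 V$-valued function on all of $Sym^2 V^*$ (losing at most a dimensional constant), which is exactly the regularity required to apply Picard--Lindel\"of to the flow \eqref{eqn:gradientflow}.

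For the uniform estimate I would exploit the product structure of the integrand in Theorem~\ref{thm:firstvariation}: it is the product of the two residuals $r_\Sigma(X,Y)$, $\overline{r_\Sigma(X',Y')}$, the scalar $\mathcal K'(|X-X'|_\Sigma^2)$, and the $\Sigma$-independent tensor $(X-X')\otimes(X-X')$. Telescoping $D\mathcal J(\Sigma;\lambda)-D\mathcal J(\Sigma';\lambda)$ into three terms, each with a single difference of $\Sigma$-dependent factors, I need the remaining factors bounded uniformly and the differences bounded by $|\Sigma-\Sigma'|$ times an integrable weight. The uniform bounds are elementary: the trivial inequality $\mathcal J(\Sigma;\lambda)\le\frac12\E[|Y|^2]$ gives $\norm{F_\Sigma}_{\mathcal H_\Sigma}^2\le\E[|Y|^2]/\lambda$, so by the embedding $\norm{F_\Sigma}_{C^0}\le\norm{F_\Sigma}_{\mathcal H_\Sigma}$ we get $|r_\Sigma(X,Y)|\le|Y|+\sqrt{\E[|Y|^2]/\lambda}$ uniformly in $\Sigma$, while $|\mathcal K'(\,\cdot\,)|\le\norm{\mathcal K'}_\infty$ and the tensor contributes the weight $|X-X'|_V^2$.

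Two Lipschitz-in-$\Sigma$ inputs remain. For the kernel factor, the mean value theorem gives at once
\[
\bigl|\mathcal K'(|X-X'|_\Sigma^2)-\mathcal K'(|X-X'|_{\Sigma'}^2)\bigr|\le\norm{\mathcal K''}_\infty\bigl|\langle(X-X')\otimes(X-X'),\Sigma-\Sigma'\rangle\bigr|\le\norm{\mathcal K''}_\infty\,|X-X'|_V^2\,|\Sigma-\Sigma'|.
\]
For the residual I would use the representer theorem: $F_\Sigma$ is determined by the solution $\alpha_\Sigma\in L^2(\mathrm{law}\,X)$ of $(\mathcal T_\Sigma+\lambda)\alpha=Y$, where $\mathcal T_\Sigma$ is the integral operator with kernel $K(|x-x'|_\Sigma^2)$ against the law of $X$. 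Since $\mathcal K$ is bounded, $\norm{\mathcal T_\Sigma}_{\mathrm{op}}$ is bounded uniformly, so $(\mathcal T_\Sigma+\lambda)^{-1}$ has operator norm $\le 1/\lambda$; the Hilbert--Schmidt estimate $\norm{\mathcal T_\Sigma-\mathcal T_{\Sigma'}}_{\mathrm{op}}\le\norm{\mathcal K'}_\infty\,\E[|X-X'|_V^4]^{1/2}|\Sigma-\Sigma'|$ together with the resolvent identity then gives $\norm{\alpha_\Sigma-\alpha_{\Sigma'}}_{L^2}\le C|\Sigma-\Sigma'|$; feeding this back through the representer formula and again using boundedness of $\mathcal K$ yields the pointwise bound $|F_\Sigma(x)-F_{\Sigma'}(x)|\le C(1+|x|_V^2)|\Sigma-\Sigma'|$, uniformly in $\Sigma,\Sigma'$. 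This last step is essentially the content of \cite[Section 4.2]{RuanLi}.

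Substituting the three bounds into the telescoped difference, the integrand is dominated pointwise by $C\,|\Sigma-\Sigma'|$ times a product of at most one power each of $|Y|$, $|Y'|$ and at most $|X-X'|_V^4$ (the fourth power coming from the $\mathcal K''$ term times the tensor). Taking expectations and using $|X-X'|_V^4\le 8(|X|_V^4+|X'|_V^4)$, the independence of $(X,Y)$ and $(X',Y')$, and Cauchy--Schwarz, finiteness reduces to $\E[|Y|^2]<+\infty$ and $\E[|X|^8]<+\infty$, the binding term being of order $\E[|Y|^2]\,\E[|X|_V^8]^{1/2}$; this produces the desired $C$ and completes the proof after the extension step. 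The one genuinely delicate point is the residual estimate: one must make the resolvent perturbation bound quantitative in the moments of $X$ and check that it does not degenerate as $\Sigma$ approaches a rank-deficient form, nor as $\Sigma$ escapes to infinity (where $F_\Sigma\to 0$); everything else is routine bookkeeping with Cauchy--Schwarz and the moment hypotheses.
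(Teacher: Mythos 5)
Corollary \ref{cor:Lipextension} is not proved in this paper at all: it is imported verbatim from the companion paper as \cite[Corollary 4.10]{RuanLi}, so there is no in-paper argument to compare yours against. Judged on its own terms, your proposal is a correct and natural route to the statement: telescoping the first-variation formula of Theorem \ref{thm:firstvariation} into three differences, bounding the residuals uniformly by the trivial energy bound $\lambda\norm{F_\Sigma}_{\mathcal H_\Sigma}^2\le \E[|Y|^2]$ together with $\norm{F_\Sigma}_{C^0}\le\norm{F_\Sigma}_{\mathcal H_\Sigma}$ (which uses the normalization $\int k_V=1$), handling the kernel factor by the mean value theorem with $\norm{\mathcal K''}_\infty$, and handling the $\Sigma$-dependence of the residual through the population ridge-regression identity $\lambda F_\Sigma=\E[r_\Sigma(X,Y)K_\Sigma(\cdot,X)]$, equivalently the resolvent perturbation bound you sketch. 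The moment bookkeeping also comes out right: the worst terms are of the form $(1+|X|_V^2)^2|X-X'|_V^4$ and $|X-X'|_V^8$ after Cauchy--Schwarz, which is exactly where $\E[|X|^8]<+\infty$ enters, and $\E[|Y|^2]<+\infty$ covers the residual factors. The final extension from the closed set $Sym^2_{\geq 0}$ to all of $Sym^2 V^*$ by a McShane-type (or nearest-point projection) extension is routine and harmless.

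One reassurance on the point you flag as delicate: the uniformity of the residual estimate does not in fact degenerate near the boundary strata or as $\Sigma\to\infty$. For every $\Sigma\in Sym^2_{\geq 0}$ the kernel $K_\Sigma(x,x')=\mathcal K(|x-x'|_\Sigma^2)$ is a positive semi-definite kernel, so the integral operator $\mathcal T_\Sigma$ on $L^2(\mathrm{law}\,X)$ is positive and $\norm{(\mathcal T_\Sigma+\lambda)^{-1}}_{\mathrm{op}}\le 1/\lambda$ holds uniformly; combined with $\norm{\mathcal T_\Sigma-\mathcal T_{\Sigma'}}_{\mathrm{op}}\le \norm{\mathcal K'}_\infty\E[|X-X'|_V^4]^{1/2}|\Sigma-\Sigma'|$ and $\norm{\E[Y|X]}_{L^2}\le\E[|Y|^2]^{1/2}$, all constants depend only on $\lambda$, $\norm{\mathcal K'}_\infty$, $\norm{\mathcal K''}_\infty$, $\mathcal K(0)$ and the stated moments. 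Alternatively, one can avoid the operator formalism entirely by subtracting the two stationarity identities $\lambda F_\Sigma=\E[r_\Sigma K_\Sigma(\cdot,X)]$ and $\lambda F_{\Sigma'}=\E[r_{\Sigma'}K_{\Sigma'}(\cdot,X)]$ and absorbing the $F_\Sigma-F_{\Sigma'}$ term, which yields the same weighted pointwise bound $|F_\Sigma(x)-F_{\Sigma'}(x)|\le C(1+|x|_V^2)|\Sigma-\Sigma'|$; whether \cite{RuanLi} proceeds this way or via the resolvent, your argument is essentially the expected one.
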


	\begin{rmk}
		As a caveat, for the Sobolev kernel, $\mathcal{K}$ only has a finite number of derivatives at zero, so we can only expect $\mathcal{J}(\Sigma;\lambda)$ to be differentiable to finite order on the boundary of $Sym^2_{\geq 0}$, instead of being smooth.
	\end{rmk}

	\section{Riemannian gradient flow}

	\subsection{Riemannian metric in the eigenbasis}\label{Sect:Riemannianmetriceigenbasis}

	We now seek to understand the Riemannian metric $\mathfrak{g}$ in the diagonal form. Given a point $\Sigma\in Sym^2_+$, we can simultaneously diagonalize $\Sigma$ and the given inner product $|\cdot|_V^2$ in an orthogonal basis $\{ e_i\}$ of $V$,
	\[
	C_{ij}= \delta_{ij}, \quad \Sigma_{ij}=\lambda_i \delta_{ij}. 
	\]
	Then given any $A\in T_\Sigma^* Sym^2_+\simeq Sym^2 V$, we can write $A= \sum A^{ij} e_i\otimes e_j$ for some symmetric matrix $A_{ij}$, and
	\[
	|A|_{\mathfrak{g}}^2= \Tr (A\Sigma A) =\sum_{i,j}  A^{ij} \lambda_jA^{ji} =\sum_{i,j} \lambda_i |A^{ij}|^2= \frac{1}{2} \sum_{i,j} (\lambda_i+\lambda_j) |A^{ij}|^2. 
	\]
	In terms of the tangent vectors $B\in T_\Sigma Sym^2_+\simeq Sym^2 V^*$, we can write $B=\sum B_{ij}e_i^*\otimes e_j^* $, and 
	\begin{equation}\label{eqn:Riemannianmetrictangentspace}
		|B|_{\mathfrak{g}}^2=  \sum_{i,j} \frac{2}{(\lambda_i+\lambda_j)} |B_{ij}|^2.
	\end{equation}
	Along the boundary strata $\mathcal{S}_l\subset Sym^2_{\geq 0}$, the Riemannian metric on the tangent bundle of $Sym^2_+$ blows up in the directions transverse to $\mathcal{S}_l$, while the Riemannian metric on the cotangent bundle of $Sym^2_+$ extends to a continuous tensor field on $Sym^2_{\geq 0}$.



	\subsection{Extension of the flow to $Sym^2_{\geq 0}$}\label{sect:extensionflow}

	Our goal is to extend the flow from $Sym^2_+$ to $Sym^2_{\geq 0}$, even though the Riemannian metric degenerates on the boundary of $Sym^2_{\geq 0}$.

	We assume the setting of Theorem \ref{thm:longtimeexistence}. By Corollary \ref{cor:Lipextension}, 
	$D\mathcal{J}$ extends to a Lipschitz  vector field on the vector space $Sym^2 V^*$, which we still denote as $D\mathcal{J}$. Then by Picard-Lindel\"of, the flow 
	\begin{equation}\label{eqn:gradientflow2}
		\frac{d\Sigma_{ab}}{dt}= - \frac{1}{2} (\Sigma_{ac} C_{bd} (D \mathcal{J})^{cd}   +  \Sigma_{bd} C_{ac}  (D \mathcal{J})^{cd}    )
	\end{equation}
	exists for some definite time for initial data in any given compact set in $Sym^2 V^*$, and depends continuously on the initial data.

	\begin{lem}
		Under the flow (\ref{eqn:gradientflow2}), if the initial data $\Sigma$ lies on the stratum $\mathcal{S}_l$, then the flow $\Sigma(t)$ stays in the stratum $\mathcal{S}_l$. Hence the flow preserves the semi-positive cone $Sym^2_{\geq 0}= Sym^2_+\cup \bigcup_l \mathcal{S}_l$, and in particular the restriction of the flow to $Sym^2_{\geq 0}$ does not depend on the extension of $D\mathcal{J}$.
	\end{lem}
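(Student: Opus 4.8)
The plan is to exploit the \emph{congruence structure} of the right-hand side of \eqref{eqn:gradientflow2}. Write $C=(C_{ij})$ for the inverse covariance matrix — symmetric and positive definite — and abbreviate $G(\Sigma):=(D\mathcal{J}(\Sigma;\lambda))^{ij}$, a symmetric matrix. Since $C$ and $G$ are symmetric, \eqref{eqn:gradientflow2} reads, in matrix notation,
\[
\dot\Sigma \;=\; -\frac12\bigl(\Sigma\,G(\Sigma)\,C + C\,G(\Sigma)\,\Sigma\bigr) \;=\; -\frac12\bigl(\Sigma\,A(\Sigma) + A(\Sigma)^{T}\Sigma\bigr),\qquad A(\Sigma):=G(\Sigma)\,C,
\]
using that $A(\Sigma)^{T} = C\,G(\Sigma)$. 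A flow of this Lyapunov type, with the endomorphism $A$ held fixed, acts on $\Sigma$ by congruence, and congruence preserves both rank and inertia; the job is to make this rigorous when $A$ itself depends on $\Sigma$.

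To do so, take the solution $\Sigma(\cdot)$ of \eqref{eqn:gradientflow2} on its interval of existence $[0,T)$ furnished by Picard--Lindel\"of, with initial value $\Sigma(0)\in\mathcal{S}_l$. Because $D\mathcal{J}$ is (Lipschitz) continuous on $Sym^2 V^*$ by Corollary \ref{cor:Lipextension} and $\Sigma(\cdot)$ is $C^1$, the map $t\mapsto A(t):=A(\Sigma(t))$ is continuous on $[0,T)$. Now solve the \emph{linear} ODE $\dot R(t)=-\frac12\,R(t)\,A(t)$ with $R(0)=\mathrm{Id}$; this has a unique solution on all of $[0,T)$, and Liouville's formula gives $\det R(t)=\exp\!\bigl(-\frac12\int_0^t\Tr A(s)\,ds\bigr)\neq 0$, so $R(t)\in GL(V)$ for every $t$. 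Differentiating $\Theta(t):=R(t)^{T}\Sigma(0)R(t)$ yields $\dot\Theta=-\frac12(\Theta A(t)+A(t)^{T}\Theta)$ with $\Theta(0)=\Sigma(0)$, the very same linear ODE (with continuous coefficients) that $\Sigma(t)$ satisfies; by uniqueness, $\Sigma(t)=R(t)^{T}\Sigma(0)R(t)$ for all $t\in[0,T)$.

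With the factorization in hand, the conclusions follow from elementary linear algebra. Since $R(t)$ is invertible, Sylvester's law of inertia shows $\Sigma(t)$ has the same rank and signature as $\Sigma(0)$; if $\Sigma(0)\in\mathcal{S}_l$ then $\Sigma(t)$ remains positive semi-definite of rank $l$, i.e.\ $\Sigma(t)\in\mathcal{S}_l$, and the same argument with $l$ replaced by $d$ shows $Sym^2_+$ is preserved. Hence the flow preserves $Sym^2_{\geq 0}=Sym^2_+\cup\bigcup_l\mathcal{S}_l$. Finally, a flow line issuing from $Sym^2_{\geq 0}$ never leaves it, so it only ever evaluates $D\mathcal{J}$ on $Sym^2_{\geq 0}$, where $D\mathcal{J}$ is intrinsically defined by Proposition \ref{prop:continuousextensionJ}; therefore the restriction of the flow to $Sym^2_{\geq 0}$ does not depend on which Lipschitz extension of $D\mathcal{J}$ to $Sym^2 V^*$ was chosen.

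The only point needing care is the apparent circularity in $A=A(\Sigma)$, and it is dispatched by the standard device above: first produce the solution $\Sigma(t)$, then regard $t\mapsto A(\Sigma(t))$ as a prescribed continuous time-dependent coefficient and compare two solutions of the resulting \emph{linear} equation for $\Theta$. No genuine obstacle remains — the congruence identity $\dot\Theta=-\frac12(\Theta A+A^{T}\Theta)$ for $\Theta=R^{T}\Sigma(0)R$ is a one-line computation, and the invertibility of $R(t)$ is immediate from Liouville's formula.
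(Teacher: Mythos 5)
Your proof is correct, and it takes a more explicit route than the paper's. The paper's own argument is a one-line invariant-submanifold observation: in an orthonormal basis the right-hand side of \eqref{eqn:gradientflow2} has the form $-\frac12(A^{T}\Sigma+\Sigma A)$, which is a tangent vector to the stratum $\mathcal{S}_l$ at $\Sigma$, hence the flow stays in $\mathcal{S}_l$. You instead \emph{integrate} this infinitesimal congruence action: by freezing the trajectory, treating $t\mapsto A(\Sigma(t))$ as a prescribed continuous coefficient, solving the auxiliary linear ODE for $R(t)$, and invoking uniqueness for the resulting linear equation, you obtain the explicit factorization $\Sigma(t)=R(t)^{T}\Sigma(0)R(t)$ with $R(t)\in GL(V)$, and then conclude by Sylvester's law of inertia. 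The two arguments rest on the same underlying structure (the vector field is an infinitesimal congruence), but yours buys something extra: it is self-contained where the paper's tangency claim implicitly relies on the standard ``tangent vector field on a (non-closed) submanifold plus uniqueness implies invariance'' argument, and it shows that the full inertia of $\Sigma(0)$ --- not merely the rank within the positive semi-definite cone --- is preserved, which makes the preservation of each $\mathcal{S}_l$ and of $Sym^2_{\geq 0}$ immediate. Your closing remark on independence of the extension is also right, though to be fully precise one should add that two different Lipschitz extensions of $D\mathcal{J}$ yield two flows, each of which preserves $Sym^2_{\geq 0}$ by the above, and since the two vector fields agree on $Sym^2_{\geq 0}$, Picard--Lindel\"of uniqueness forces the two flows to coincide there.
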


	\begin{proof}
		We work in a fixed orthonormal basis of $|\cdot|_V^2$, so $D\mathcal{J}$ is represented by a symmetric matrix.
		For any matrix $A$, the matrix $A^T\Sigma+ \Sigma A$ represents a tangent vector at the point $\Sigma$ for the submanifold $\mathcal{S}_l\subset Sym^2 V^*$, hence  the flow stays within $\mathcal{S}_l$.
	\end{proof}

	\subsection{Eigenvalue evolution and long time existence}\label{sect:longtimeexistence}

	We shall prove that the Riemannian gradient flow exists for long time. This essentially follows  from the bound on the evolution of the eigenvalues.

	\subsubsection{Diagonal picture}\label{sect:Diagonalpicture}

	In an orthonormal basis of $|\cdot|_V^2$, the $\Sigma(t)$ is represented by symmetric matrices, and the gradient flow reads
	\[
	\frac{d}{dt}\Sigma(t)=-\frac{1}{2}\Sigma D\mathcal{J}(\Sigma;\lambda)- \frac{1}{2}D\mathcal{J}(\Sigma;\lambda) \Sigma.
	\]
	At any given time $t$ as long as the flow exists, we can diagonalize the matrix $\Sigma(t)=\sum \lambda_i(t) e_i^*(t)\otimes e_i^*(t)$, with $\lambda_1\leq \ldots\leq \lambda_d$, and $e_1,\ldots e_d$ here denote the corresponding eigenbasis.

	By the first variation formula in Theorem \ref{thm:firstvariation},
	\[
	D_\Sigma \mathcal{J}(\Sigma, \lambda)=  - \frac{1}{2\lambda}   \E[  r_\Sigma(X,Y) \overline{ r_\Sigma(X',Y') }    K_{\gamma}'( |X-X'|_\Sigma^2) (X-X')\otimes (X-X')].
	\]
	Thus the matrix component $(\frac{d}{dt}\Sigma(t))_{ij}$ is
	\[
	\frac{\lambda_i+\lambda_j}{4\lambda}   \E[  r_\Sigma(X,Y) \overline{  r_\Sigma(X',Y') }    K_{\gamma}'( |X-X'|_\Sigma^2) (X-X')_i(X-X')_j].
	\]
	In particular, the diagonal entries 
	\[
	\frac{d}{dt}\Sigma(t)_{ii} = \frac{\lambda_i}{2\lambda}   \E[  r_\Sigma(X,Y)  \overline{r_\Sigma(X',Y')  }   K_{\gamma}'( |X-X'|_\Sigma^2) |(X-X')_i|^2].
	\]

	\begin{lem}\label{lem:eigenvalueevolution}
		Suppose all the eigenvalues are distinct. Then the eigenvalues and eigenvectors depend smoothly on $t$, and
		\[
		\frac{d}{dt} \lambda_i(t)= \frac{\lambda_i}{2\lambda}   \E[  r_\Sigma(X,Y)  \overline{r_\Sigma(X',Y') }    K_{\gamma}'( |X-X'|_\Sigma^2) |(X-X')_i|^2],
		\]
		and the evolution of the eigenvector is given by
		\[
		\begin{split}
			&  \frac{d}{dt} e_i(t)= \sum_{j\neq i} \frac{ 1 }{\lambda_j-\lambda_i  } (\frac{d \Sigma(t)}{dt})_{ij} e_j
			\\
			= & \sum_{j\neq i} \frac{\lambda_i+\lambda_j}{4\lambda(\lambda_j-\lambda_i)}   \E[  r_\Sigma(X,Y)  \overline{r_\Sigma(X',Y')  }   K_{\gamma}'( |X-X'|_\Sigma^2) (X-X')_i(X-X')_j] e_j.
		\end{split}
		\]

	\end{lem}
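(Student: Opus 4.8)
The plan is to derive the eigenvalue/eigenvector evolution from standard first-order perturbation theory for symmetric matrices, then read off the explicit formulas using the matrix entries $(\tfrac{d}{dt}\Sigma)_{ij}$ already computed in Section~\ref{sect:Diagonalpicture}. First I would note that the matrix $\Sigma(t)$ evolves smoothly in $t$ for as long as the flow exists, since the right-hand side $-\tfrac12(\Sigma D\mathcal{J} + D\mathcal{J}\,\Sigma)$ is a smooth (indeed real-analytic in $\Sigma$, Lipschitz in the relevant estimates) function of $\Sigma$, and $D\mathcal{J}(\Sigma;\lambda)$ is $C^1$. Under the hypothesis that the eigenvalues $\lambda_1(t)<\dots<\lambda_d(t)$ are distinct at the time in question, they remain distinct on a neighborhood, and by the implicit function theorem applied to the characteristic polynomial (or directly to the eigenprojections, which are given by contour integrals of the resolvent), both the eigenvalues $\lambda_i(t)$ and a choice of unit eigenvectors $e_i(t)$ depend smoothly on $t$.

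Next I would carry out the perturbation computation. Differentiating the eigenvalue relation $\Sigma(t)e_i(t)=\lambda_i(t)e_i(t)$ in $t$ and pairing with $e_i$ (using $\langle e_i,e_j\rangle=\delta_{ij}$ and self-adjointness of $\Sigma$) gives
\[
\dot\lambda_i=\big\langle e_i,\dot\Sigma\, e_i\big\rangle=\Big(\frac{d}{dt}\Sigma(t)\Big)_{ii},
\]
which is exactly the diagonal entry displayed just before the Lemma, namely $\tfrac{\lambda_i}{2\lambda}\,\E[r_\Sigma(X,Y)\overline{r_\Sigma(X',Y')}K_\gamma'(|X-X'|_\Sigma^2)|(X-X')_i|^2]$. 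For the eigenvectors, pairing the differentiated relation with $e_j$ for $j\neq i$ yields $\langle e_j,\dot\Sigma\,e_i\rangle+\lambda_j\langle \dot e_i,e_j\rangle=\lambda_i\langle\dot e_i,e_j\rangle$, hence $\langle\dot e_i,e_j\rangle=\tfrac{1}{\lambda_j-\lambda_i}(\dot\Sigma)_{ij}$; normalization $|e_i|=1$ forces $\langle\dot e_i,e_i\rangle=0$, so expanding $\dot e_i$ in the orthonormal basis $\{e_j\}$ gives $\dot e_i=\sum_{j\neq i}\tfrac{1}{\lambda_j-\lambda_i}(\dot\Sigma)_{ij}e_j$. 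Substituting the off-diagonal entry $(\dot\Sigma)_{ij}=\tfrac{\lambda_i+\lambda_j}{4\lambda}\,\E[\,r_\Sigma(X,Y)\overline{r_\Sigma(X',Y')}K_\gamma'(|X-X'|_\Sigma^2)(X-X')_i(X-X')_j\,]$ from the same display produces the stated formula.

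There is essentially no serious obstacle here; the only point requiring a little care is the smoothness claim, i.e. justifying that the eigen-decomposition is differentiable, which is exactly where the distinctness of eigenvalues is used — with repeated eigenvalues the individual eigenvectors need not vary smoothly (only the eigenprojections onto the full eigenspaces do). I would handle this by the resolvent/spectral-projection argument: for distinct eigenvalues one can enclose each $\lambda_i(t_0)$ in a small circle $\Gamma_i$ in $\C$ separating it from the others, and $P_i(t)=\tfrac{1}{2\pi i}\oint_{\Gamma_i}(z-\Sigma(t))^{-1}dz$ is smooth in $t$ near $t_0$ since $\Sigma(t)$ is; then $\lambda_i(t)=\Tr(\Sigma(t)P_i(t))$ is smooth, and a smooth unit eigenvector is obtained by normalizing $P_i(t)e_i(t_0)$. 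Alternatively one can simply invoke the classical theorem (e.g. Kato) that simple eigenvalues and corresponding eigenvectors of a smoothly varying family of symmetric matrices are smooth. Everything else is the linear-algebra identity above together with direct substitution of the already-established first variation formula, so the proof is short.
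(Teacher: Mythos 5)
Your approach---smoothness of the eigendecomposition for simple eigenvalues (Kato/resolvent projections), first-order perturbation of $\Sigma(t)e_i(t)=\lambda_i(t)e_i(t)$, and substitution of the entries of $\tfrac{d\Sigma}{dt}$ computed in the diagonal picture---is exactly the argument the paper relies on (the lemma is stated there without further proof), and it establishes the eigenvalue formula correctly. One step does not survive scrutiny, however: from your own displayed identity $\langle e_j,\dot\Sigma\,e_i\rangle+\lambda_j\langle\dot e_i,e_j\rangle=\lambda_i\langle\dot e_i,e_j\rangle$ the conclusion is $\langle\dot e_i,e_j\rangle=(\dot\Sigma)_{ij}/(\lambda_i-\lambda_j)$, not $(\dot\Sigma)_{ij}/(\lambda_j-\lambda_i)$; your ``hence'' flips a sign in order to land on the statement as printed. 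The standard perturbation formula is $\dot e_i=\sum_{j\neq i}\frac{(\dot\Sigma)_{ij}}{\lambda_i-\lambda_j}\,e_j$, so the denominator in the lemma's first display (and correspondingly the factor $4\lambda(\lambda_j-\lambda_i)$ in the second) appears to carry a sign typo which your derivation, carried out consistently, would actually correct; since $(\dot\Sigma)_{ij}$ is symmetric in $i,j$, no convention choice absorbs the discrepancy. Apart from this sign bookkeeping, the proof is complete and coincides with the intended argument.
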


	\subsubsection{Long time existence}

	\begin{lem}
		(Matrix maximum principle)
		Suppose $\tilde{\Sigma}(t)$ is a smooth one-parameter family of positive definite inner products, and $\beta$ is a constant, such that
		\[
		\tilde{\Sigma}(0) \geq c|\cdot|_V^2 , \text{ resp. } \tilde{\Sigma}(0) \leq c |\cdot|_V^2 ,
		\]
		and at any time $t$,  for any lowest eigenvector $w$ (resp. highest eigenvector) of the self adjoint operator defined by $\tilde{\Sigma}$ with respect to $|\cdot|_V^2$, we have
		\[
		\frac{d}{dt} \tilde{\Sigma}(t)(w, w) \geq \beta  \tilde{\Sigma}(t) (w, w), \text{ resp. } \frac{d}{dt} \tilde{\Sigma}(t) (w, w) \leq \beta  \tilde{\Sigma}(t)  (w,w).
		\]
		Then for any $t\geq 0$, we have
		$
		\tilde{\Sigma} (t) \geq e^{\beta t} c|\cdot|_V^2 ,
		$
		(resp. $
		\tilde{\Sigma} (t) \leq e^{\beta t}c|\cdot|_V^2  . )
		$

	\end{lem}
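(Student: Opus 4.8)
The plan is to reduce the tensorial estimate to a one-dimensional Gronwall argument for the smallest (resp.\ largest) eigenvalue of $\tilde{\Sigma}(t)$ relative to $|\cdot|_V^2$. Let $S(t)$ denote the $|\cdot|_V^2$-self-adjoint operator with $\tilde{\Sigma}(t)(v,v)=(S(t)v,v)_V$, and set
\[
\mu(t)=\min_{|w|_V=1}\tilde{\Sigma}(t)(w,w),\qquad M(t)=\max_{|w|_V=1}\tilde{\Sigma}(t)(w,w),
\]
the smallest and largest eigenvalues of $S(t)$. Since $\tilde{\Sigma}(t)\geq e^{\beta t}c\,|\cdot|_V^2$ is equivalent to $\mu(t)\geq e^{\beta t}c$ (and likewise the upper bound corresponds to $M(t)\leq e^{\beta t}c$), it suffices to establish the corresponding scalar inequalities.

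First I would record that $\mu$ and $M$ are locally Lipschitz in $t$: as $t\mapsto S(t)$ is smooth, Weyl's inequality gives $|\mu(t)-\mu(s)|\leq\|S(t)-S(s)\|_{\mathrm{op}}\leq C|t-s|$ on compact intervals, and similarly for $M$. Hence $\mu$ is absolutely continuous, so differentiable at a.e.\ $t$. The key step is to identify $\mu'(t)$ at such a $t$: choosing a unit lowest eigenvector $w=w_t$ of $S(t)$, the function $g(s):=\tilde{\Sigma}(s)(w,w)-\mu(s)$ is nonnegative, vanishes at $s=t$, and is differentiable there, so $g'(t)=0$, i.e.
\[
\mu'(t)=\frac{d}{dt}\tilde{\Sigma}(t)(w_t,w_t).
\]
This holds irrespective of the multiplicity of the lowest eigenvalue; eigenvalue crossings occur only on the null set where $\mu$ fails to be differentiable, and are invisible to the integration below. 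Substituting into the hypothesis gives $\mu'(t)\geq\beta\,\tilde{\Sigma}(t)(w_t,w_t)=\beta\mu(t)$ for a.e.\ $t\geq 0$.

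Finally, setting $\nu(t)=e^{-\beta t}\mu(t)$, one has $\nu$ absolutely continuous with $\nu'(t)=e^{-\beta t}\bigl(\mu'(t)-\beta\mu(t)\bigr)\geq 0$ for a.e.\ $t$, hence $\nu$ is nondecreasing and $\nu(t)\geq\nu(0)=\mu(0)\geq c$; thus $\mu(t)\geq e^{\beta t}c$ and $\tilde{\Sigma}(t)\geq e^{\beta t}c\,|\cdot|_V^2$. The $\leq$ statement is proved identically with $M$ in place of $\mu$: now $g(s)=\tilde{\Sigma}(s)(w,w)-M(s)\leq 0$ attains a maximum at $t$, the hypothesis yields $M'(t)\leq\beta M(t)$ a.e., and $e^{-\beta t}M(t)$ is nonincreasing, giving $M(t)\leq e^{\beta t}c$.

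The only delicate point is the eigenvalue-perturbation step; the argument above is self-contained, but one may alternatively invoke Danskin's theorem or standard first-order perturbation theory of eigenvalues. Since the difficulty is entirely localized at eigenvalue crossings and $\mu$ (resp.\ $M$) is Lipschitz, these form a Lebesgue-null set of times over which $\mu$ is recovered by integrating $\mu'$, so there is no real obstruction; the remainder is a routine scalar Gronwall estimate with integrating factor $e^{-\beta t}$.
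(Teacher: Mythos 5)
Your proof is correct, but it takes a genuinely different route from the paper's. You work directly with the extreme eigenvalues $\mu(t)=\min_{|w|_V=1}\tilde\Sigma(t)(w,w)$ and $M(t)=\max_{|w|_V=1}\tilde\Sigma(t)(w,w)$: they are locally Lipschitz (Weyl/min--max), hence absolutely continuous, and at a.e.\ $t$ the first-order optimality argument ($g(s)=\tilde\Sigma(s)(w_t,w_t)-\mu(s)\ge 0$ with $g(t)=0$) identifies $\mu'(t)$ with $\tfrac{d}{dt}\tilde\Sigma(t)(w_t,w_t)$, after which the hypothesis plus the integrating factor $e^{-\beta t}$ gives the scalar Gronwall conclusion; the same works for $M$. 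The paper instead rescales to reduce to $\beta=0$, passes to the dual inner product to reduce the two cases to one, strengthens the hypothesis to a strict inequality by a limiting argument, and then argues by contradiction using convex geometry: at the first time $\tilde\Sigma(t)$ crosses the boundary of the convex set $\{\tilde\Sigma\le c|\cdot|_V^2\}$, the outward normal cone is generated by $w\otimes w$ over highest eigenvectors, and pairing $\tfrac{d}{dt}\tilde\Sigma$ against a convex combination of these contradicts the strict hypothesis. Your approach buys a self-contained, symmetric treatment of both bounds with no duality or $\epsilon$-perturbation step, at the price of the measure-theoretic ingredients (Lipschitz eigenvalues, a.e.\ differentiability, FTC for absolutely continuous functions); the paper's convex-cone argument avoids those ingredients and is the style that generalizes to maximum principles over more general convex constraint sets. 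One small inaccuracy in your write-up: the claim that eigenvalue crossings occur only on the null set where $\mu$ fails to be differentiable is not quite right (crossings can happen at differentiability points), but your argument never uses it --- the derivative identity holds at every point of differentiability regardless of multiplicity, as you note.
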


	\begin{proof}
		By replacing $\tilde{\Sigma}(t)$ with $\tilde{\Sigma}(t) e^{-\beta t}$, we can reduce to proving the case of $\beta=0$. By replacing $\tilde{\Sigma}$ with the dual inner product $\tilde{\Sigma}^{-1}$, we only need to prove the `resp.' statement. By a standard limiting argument in the proof of strong maximum principles, we can assume the stronger condition
		\[
		\frac{d}{dt} \tilde{\Sigma}(t)(w, w) < 0,
		\]
		and prove $
		\tilde{\Sigma} (t) \leq c I$,
		for all $t\geq 0$.

		Suppose the contrary, then at some time $t$, the inner product $\tilde{\Sigma}(t)$ crosses the boundary of the convex subset $\{ \tilde{\Sigma} \leq c |\cdot|_V^2 \}$. At the boundary point $\tilde{\Sigma}(t)$ of this convex set, the outward normals to the supporting hyperplanes form a convex cone, generated by $w\otimes w$ where $w$ ranges among all the eigenvectors of $\tilde{\Sigma}(t)$ with the highest eigenvalue $c$. By assumption $\tilde{\Sigma}$ crosses some supporting hyperplane of the convex set, so there is some convex combination 
		\[
		\sum_i a_i w_i\otimes w_i,\quad a_i\geq 0,\quad \sum a_i=1, \quad \text{$w_i$ are highest eigenvectors},
		\]
		such that 
		\[
		\langle \frac{d}{dt}\tilde{\Sigma}(t) ,  \sum_i a_i w_i\otimes w_i \rangle \geq 0. 
		\]
		This contradicts the hypothesis. 
	\end{proof}

	We  rule out the divergence to infinity or degeneration in finite time, by an upper bound and a lower bound on $\Sigma(t)$.

	\begin{prop}
		\label{prop:eigenvalueboundevolution}
		In the setting of Theorem \ref{thm:longtimeexistence},
		let
		\[
		\beta_0= \frac{1}{\lambda} \E[|Y|^2 ] \sup_{|w|_V=1}  \E[  |(X, w)_V|^4  ]^{1/2} .
		\]
		Then as long as the flow exists, $\Sigma(t)$  satisfies the bound in time
		\[
		\Sigma (t)\leq  C(\Sigma(0)) |\cdot|_V^2 e^{C(K)\beta_0 t}.
		\]
		where $C(K)$ depends only on the kernel function, and $C(\Sigma(0))$ depends only on  the norm bound of $\Sigma(0)$. 
	\end{prop}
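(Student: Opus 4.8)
The plan is to derive a pointwise-in-time differential inequality for the largest eigenvalue of $\Sigma(t)$ and then feed it into the matrix maximum principle established just above. Fix a time $t$ at which the flow exists and let $w\in V$ be any eigenvector, of unit $|\cdot|_V^2$-length, of the $|\cdot|_V^2$-self-adjoint operator associated to $\Sigma(t)$ with the largest eigenvalue $\lambda_d(t)=\Sigma(t)(w,w)$. Completing $w$ to an orthonormal eigenbasis as in Section \ref{sect:Diagonalpicture} and using the diagonal evolution formula there (equivalently Lemma \ref{lem:eigenvalueevolution}), one has
\[
\Bigl(\frac{d}{dt}\Sigma(t)\Bigr)(w,w)=\frac{\lambda_d(t)}{2\lambda}\,\E\!\left[\,r_\Sigma(X,Y)\,\overline{r_\Sigma(X',Y')}\,\mathcal{K}'(|X-X'|_\Sigma^2)\,(X-X',w)_V^2\,\right].
\]
So the proposition reduces to bounding the absolute value of this expectation by $C(K)\beta_0$, uniformly in the admissible $\Sigma$ and in the unit vector $w$.

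First I would control the residual: comparing $F_\Sigma$ with the zero test function gives $\frac{1}{2}\E[|r_\Sigma(X,Y)|^2]\le \mathcal{J}(\Sigma;\lambda)\le \frac{1}{2}\E[|Y|^2]$, hence $\E[|r_\Sigma(X,Y)|^2]\le\E[|Y|^2]$, a bound independent of $\Sigma$. Next, since $\mathcal{K}'$ is bounded by a kernel-dependent constant and $(X,Y),(X',Y')$ are independent, Cauchy--Schwarz on the product probability space yields
\[
\Bigl|\E\!\left[r_\Sigma(X,Y)\,\overline{r_\Sigma(X',Y')}\,\mathcal{K}'(|X-X'|_\Sigma^2)\,(X-X',w)_V^2\right]\Bigr|\le C(K)\,\E[|r_\Sigma(X,Y)|^2]\,\E\!\left[(X-X',w)_V^4\right]^{1/2},
\]
and by Minkowski's inequality in $L^4$ together with $|w|_V=1$, $\E[(X-X',w)_V^4]^{1/2}\le 4\,\sup_{|v|_V=1}\E[(X,v)_V^4]^{1/2}$. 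Combining the three estimates gives $\bigl|(\tfrac{d}{dt}\Sigma(t))(w,w)\bigr|\le C(K)\beta_0\,\Sigma(t)(w,w)$ for every top eigenvector $w$, after absorbing the numerical factor into $C(K)$.

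It then remains to apply the matrix maximum principle (the Lemma above) with $\tilde\Sigma(t)=\Sigma(t)$, $\beta=C(K)\beta_0$, and $c=C(\Sigma(0))$ taken to be the operator norm of $\Sigma(0)$ relative to $|\cdot|_V^2$, so that $\Sigma(0)\le c|\cdot|_V^2$; its hypotheses are exactly what we have just verified, and $\Sigma(t)$ is $C^1$ in $t$ because it solves the Lipschitz ODE (\ref{eqn:gradientflow2}). The conclusion $\Sigma(t)\le C(\Sigma(0))\,e^{C(K)\beta_0 t}|\cdot|_V^2$ is then immediate. (If the flow starts on a boundary stratum $\mathcal{S}_l$ one first restricts to the corresponding subspace, where the problem is again nondegenerate; along $Sym^2_+$ everything is as above.)

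I expect the only delicate point to be presentational rather than analytic: the diagonal evolution formula rests on the eigendecomposition of $\Sigma(t)$, which is smooth in $t$ only away from eigenvalue collisions. However, the matrix maximum principle is formulated precisely to finesse this — it requires only smoothness of $\Sigma(t)$ itself, which holds, together with the differential inequality for all highest eigenvectors at each time, which is a pointwise algebraic statement insensitive to collisions. Hence no separate regularization of the eigenflow is needed, and the remaining estimates are routine applications of Cauchy--Schwarz and the moment hypothesis $\E[|X|^4]<\infty$.
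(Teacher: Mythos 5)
Your proposal is correct and follows essentially the same route as the paper's own proof: evaluate the flow equation at a highest unit eigenvector via the first variation formula, bound the resulting expectation by $C(K)\beta_0\,\Sigma(t)(w,w)$ using the boundedness of $\mathcal{K}'$, Cauchy--Schwarz with independence, Minkowski in $L^4$, and the trivial bound $\E[|r_\Sigma|^2]\le\E[|Y|^2]$, then invoke the matrix maximum principle, noting (as the paper does) that treating $w$ as fixed in time avoids any issue with eigenvalue collisions.
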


	\begin{proof}
		Let $w$ be a unit eigenvector for the self-adjoint operator defined by $\Sigma(t)$ with respect to the fixed inner product $|\cdot|_V^2$. Then as in Section \ref{sect:Diagonalpicture}, the first variation formula and the definition of the gradient flow imply that
		\[
		\begin{split}
			& \frac{d \Sigma(t)}{dt}  (w,w)
			= 
			\frac{\Sigma(t)(w,w)}{2\lambda}   \E[  r_\Sigma(X,Y)  \overline{r_\Sigma(X',Y') }    K_{\gamma}'( |X-X'|_\Sigma^2)  |(X-X', w)_V|^2 ].
		\end{split}
		\]
		We comment that by treating $w$ as a constant vector in time,  the following argument does not rely on the assumption that the eigenvalues are distinct.

		By assumption the function $K_\gamma'$ is uniformly bounded. Thus by the H\"older inequality, we deduce
		\[
		\begin{split}
			|\frac{d\log \Sigma(t)}{dt}  (w,w)|
			\leq & \frac{C(K)}{\lambda} \E[|r_\Sigma(X,Y)\overline{r_\Sigma(X',Y')}|^2 ]^{1/2} \E[  |(X-X', w)_V|^4  ]^{1/2}
			\\
			= &  \frac{C(K)}{\lambda} \E[|r_\Sigma(X,Y)|^2 ]  \E[   |(X-X', w)_V|^4   ]^{1/2} 
			\\
			\leq &  \frac{C(K)}{ \lambda} \E[|Y-f_U(UX)|^2 ]  \E[  |(X, w)_V|^4 ]^{1/2}
			\\
			\leq &  \frac{C(K)}{\lambda} \E[|Y|^2 ]  \E[  |(X, w)_V|^4 ]^{1/2}.
		\end{split}
		\]
		Here the second line uses the independence of $(X,Y)$ and $(X',Y')$, the third line uses the Minkowski inequality
		\[
		\E[  |X-X'|^4  ]^{1/4} \leq \E[  |X|^4  ]^{1/4} + \E[  |X'|^4  ]^{1/4} =2 \E[  |X|^4  ]^{1/4} ,
		\]
		and the fourth line uses the trivial upper bound \cite[Lemma 2.4]{RuanLi}
		\[
		\lambda \norm{f_U}_\mathcal{H}^2+ \E[ |Y-f_U(UX)|^2] \leq \E[|Y|^2].
		\]
		Thus 
		\[
		|\frac{d\Sigma(t)}{dt} (w,w)| \leq C(K) \beta_0\Sigma(w,w). 
		\]
		so the matrix maximum principle implies the result. 
	\end{proof}

	\begin{proof}
		(Long time existence Theorem \ref{thm:longtimeexistence}). Suppose the maximal time of existence is $T<+\infty$. Then for any $0<t<T$, the flow remains inside a bounded region in $Sym^2_{\geq 0}$ depending only on $T$. Thus the flow can be further extended, contradiction. 
	\end{proof}

	\subsection{Comparison of flows}

	\subsubsection{Riemannian gradient flow vs. Euclidean flow for $U\in \End(V)$}

	\begin{prop}\label{prop:Euclideanflow}
		Under the map
		\[
		\pi: \End(V) \to Sym^2_{\geq 0}, \quad U\mapsto \Sigma= U^T U,
		\]
		the flow on $\End{V}$ defined by  
		\begin{equation}\label{eqn:Uflow}
			\frac{dU_a^i}{dt}= - \frac{1}{2} C_{ac} U_d^i (D\mathcal{J})^{cd}
		\end{equation}
		lifts the flow (\ref{eqn:gradientflow}) on $Sym^2_{\geq 0}$. Moreover, the flow (\ref{eqn:Uflow}) is equivalent to the \emph{Euclidean gradient flow} (\ref{eqn:Euclideanflow}) for the functional $J(U,\lambda)$ on $\End{V}$.
	\end{prop}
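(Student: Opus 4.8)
The proposition contains two assertions, which I would treat separately and both of which reduce to index bookkeeping: (i) that the $\End(V)$-flow (\ref{eqn:Uflow}) descends under $\pi$ to the Riemannian gradient flow (\ref{eqn:gradientflow}); and (ii) that (\ref{eqn:Uflow}) is literally the same ODE as the Euclidean gradient flow (\ref{eqn:Euclideanflow}). Throughout I would fix a basis of $V$ so that $C_{ij}$, $C^{ij}$, $\Sigma_{ab}$ and $(D\mathcal{J})^{ab}$ are honest symmetric matrices, and use these symmetries freely when renaming contracted indices. Here $D\mathcal{J}$ is always understood to be evaluated at the current $\Sigma = U^TU$, using the $C^1$-extension of Corollary \ref{cor:Lipextension}; in particular the right-hand side of (\ref{eqn:Uflow}) is locally Lipschitz in $U$ (being a product of $U$ with a Lipschitz function of the quadratic map $U\mapsto U^TU$), so the $U$-flow is itself well posed by Picard--Lindel\"of.

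For (i) I would differentiate $\Sigma_{ab}(t) = (U^TU)_{ab} = C_{ij}U^i_a U^j_b$ in $t$ and substitute $\dot U^i_a = -\tfrac12 C_{ac}U^i_d(D\mathcal{J})^{cd}$. Each of the two resulting terms again contains the combination $C_{ij}U^i_\bullet U^j_\bullet$, which is a copy of $\Sigma$; collecting them gives $\dot\Sigma_{ab} = -\tfrac12\big(\Sigma_{db}C_{ac}(D\mathcal{J})^{cd} + \Sigma_{da}C_{bc}(D\mathcal{J})^{cd}\big)$, and a relabelling of summation indices using the symmetry of $\Sigma$, $C$ and $D\mathcal{J}$ identifies this with (\ref{eqn:gradientflow2}), hence with (\ref{eqn:gradientflow}). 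Since $\pi$ maps onto $Sym^2_{\geq 0}$ and the identity is purely algebraic, this is the claimed lift over all of $Sym^2_{\geq 0}$, boundary strata included.

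For (ii) I would start from $J(U,\lambda) = \mathcal{J}(U^TU,\lambda)$, which is part of (\ref{eqn:twominimizationproblems}), and compute $\partial J/\partial U^j_c$ by the chain rule through $\Sigma = U^TU$. The one delicate point is the variation of the symmetric square: $U \mapsto U + \delta U$ gives $\delta\Sigma_{ab} = C_{kl}(U^k_a\delta U^l_b + \delta U^k_a U^l_b)$, and when this is paired against $D\mathcal{J}$ in the sense of Definition \ref{def:C1ext}, the two terms coincide after an $a\leftrightarrow b$, $k\leftrightarrow l$ relabelling, producing the characteristic factor of $2$; thus $\partial J/\partial U^j_c = 2\,C_{kj}U^k_a(D\mathcal{J})^{ac}$. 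Contracting with $C^{ij}$ collapses $C^{ij}C_{kj} = \delta^i_k$, leaving $C^{ij}\,\partial J/\partial U^j_c = 2\,U^i_a(D\mathcal{J})^{ac}$, and substituting into (\ref{eqn:Euclideanflow}) reproduces exactly $-\tfrac12 C_{ac}U^i_d(D\mathcal{J})^{cd}$, i.e.\ (\ref{eqn:Uflow}). This incidentally exhibits (\ref{eqn:Euclideanflow}) as $-\tfrac14$ times the gradient of $J$ with respect to the flat metric on $\End(V)\simeq V\otimes V^*$ built from $|\cdot|_V^2$ and its dual, the $\tfrac14$ being a harmless normalisation.

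The computations are routine; the step I would slow down on is the symmetric-tensor chain rule in (ii) — correctly tracking the factor of $2$, equivalently not double-counting the independent entries of $\Sigma$ — together with consistently distinguishing the contractions that use $C_{ij}$ (the inner product $|\cdot|_V^2$) from those that use its inverse $C^{ij}$ (the covariance $\Cov(X)$). Once the conventions for $D\mathcal{J}$ and for the metric on $\End(V)$ are pinned down, the two flows match on the nose, constants included.
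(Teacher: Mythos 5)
Your proposal is correct and follows essentially the same route as the paper: differentiate $\Sigma_{ab}=C_{ij}U^i_aU^j_b$ and substitute (\ref{eqn:Uflow}) to recover (\ref{eqn:gradientflow2}), then pull back $D\mathcal{J}$ through $U\mapsto U^TU$ by the chain rule, where the symmetry of $C$ and $D\mathcal{J}$ yields the factor of $2$ and the constant $\tfrac14$ matches. The added remarks on well-posedness of the $U$-flow and on the metric convention are harmless extras not needed for the statement.
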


	\begin{proof}
		We compute in the index notation, to make manifest the role of the inner product $|\cdot|^2_V= C_{ij}dx^idx^j$; the interested reader may work out the matrix notation proof.

		Since $\Sigma_{ab}= U_a^i C_{ij} U_b^j$, we have $\frac{d}{dt}\Sigma_{ab}= \frac{d U_a^i}{dt} C_{ij} U_b^j+ U^i_a C_{ij} \frac{dU_b^j}{dt}$. We can rewrite the Riemannian gradient flow (\ref{eqn:gradientflow2}) in terms of $U$:
		\[
		\frac{d U_a^i}{dt} C_{ij} U_b^j+ U^i_a C_{ij} \frac{dU_b^j}{dt}= - \frac{1}{2} (U_a^i C_{ij} U_c^j C_{bd} (D \mathcal{J})^{cd}   + U_b^j U_d^i C_{ij} C_{ac}  (D \mathcal{J})^{cd}    ).
		\]
		This is implied by the flow (\ref{eqn:Uflow}) on $U$. This shows that the flow (\ref{eqn:Uflow}) projects down to the Riemannian gradient flow (\ref{eqn:gradientflow2}) under $\pi: \End(V)\to Sym^2_{\geq 0}$.

		Furthermore,  we compute the differential of $J(U,\lambda)= \mathcal{J}(\Sigma)$ by the Leibniz rule and the chain rule:
		\[
		\begin{split}
			DJ= \pi^* D\mathcal{J}= & (D\mathcal{J})^{cd} \pi^* d\Sigma_{cd} = (D\mathcal{J})^{cd} d( U_c^i C_{ij} U_d^j  )
			\\
			= &   (D\mathcal{J})^{cd} C_{ij} U_d^j  dU_c^i  + (D\mathcal{J})^{cd} U_c^i C_{ij} dU_d^j 
			\\
			= &  2(D\mathcal{J})^{cd} C_{ij} U_d^j  dU_c^i  .
		\end{split}
		\]
		The last line here uses the symmetry of $(D\mathcal{J})^{cd}$ and $C_{ij}$. But $DJ= (DJ)_i^c dU_c^i$, so
		\[
		(DJ)_i^c = 2(D\mathcal{J})^{cd} C_{ij} U_d^j  ,\quad    C^{ij}(DJ)^c_j= 2(D\mathcal{J})^{cd} U_d^i. 
		\]
		Thus the flow (\ref{eqn:Uflow}) is equivalent to
		\[
		\frac{dU_a^i}{dt}= - \frac{1}{2} C_{ac} U_d^i (D\mathcal{J})^{cd}= - \frac{1}{4} C_{ac} C^{ij} (DJ)^c_j
		\]
		Up to a constant factor, this is just the Euclidean gradient flow for $J(U,\lambda)$ on the vector space $\End(V)$ with the Euclidean metric induced by $|\cdot|_V^2$.
	\end{proof}

	\begin{enumerate}
		\item  	Even though the Euclidean gradient flow on $\End(V)$ lifts the Riemannian gradient flow on $Sym^2_{\geq 0}$, the Euclidean metric on $\End(V)$ is very far from the pullback of the Riemannian metric on $Sym^2_+$, which vanishes along the fibre directions of $\pi: \End(V)\to Sym^2_{\geq 0}$.

		\item Euclidean gradient flow on $\End(U)$  for a generic objective function fails to preserve the rank of $U$. However, the Euclidean flow for  the functional $J(U,\lambda)$ does preserve the rank, since  the (extended) Riemannian gradient flow preserves the boundary strata $\mathcal{S}_l\subset Sym^2_{\geq 0}$.

		\item The lift of the Riemannian gradient flow to $\End(V)$ is highly non-unique, since we can twist the flow by the action of the orthogonal group $O(V)$ in a time-dependent manner. This $O(V)$  freedom will eventually be exploited in the proof of Theorem \ref{thm:montonicity}, to put $U$ into an advantageous upper triangular form.
	\end{enumerate}

	\subsubsection{Riemannian gradient flow vs. Euclidean flow for $\Sigma\in Sym^2_+$}

	We compare our Riemannian gradient flow, with a natural  Euclidean gradient flow which one may come across in the first attempt.

	A canonical choice of the metric on $Sym^2_+$ is the Euclidean metric, given on the cotangent space of $Sym^2_+$ in terms of the index notation by
	\[
	|A|_{\mathfrak{g}}^2= A^{ac} A^{bd} C_{ab} C_{cd},
	\]
	In an orthonormal basis of $|\cdot|_V^2$, this Euclidean gradient flow is
	\[
	\frac{d}{dt}\Sigma(t)= - D\mathcal{J}(\Sigma;\lambda),
	\]
	while our Riemannian gradient flow is
	\[
	\frac{d}{dt}\Sigma(t)= -\frac{1}{2}\Sigma D\mathcal{J}(\Sigma;\lambda)-  \frac{1}{2}D\mathcal{J}(\Sigma;\lambda) \Sigma. 
	\]
	Both flows respect the rotational symmetry.

	\begin{enumerate}
		\item 
		When it comes to eigenvalue evolution, the difference between these two choices in the gradient flow, is the extra factor $\lambda_i$ in Lemma \ref{lem:eigenvalueevolution}. This factor slows down the evolution when the eigenvalue is small, and accelerates the evolution when the eigenvalue is large. 
		As for the eigenvectors, in the simple case that $\lambda_i\ll \lambda_j$, then the factor $\frac{\lambda_i+\lambda_j}{\lambda_j-\lambda_i}$ in Lemma \ref{lem:eigenvalueevolution} is comparable to one, while the Euclidean gradient descent would produce a factor $ \frac{1}{\lambda_j-\lambda_i} $, making the rotation speed of the eigenvector more dependent on the scale of the eigenvalues.

		\item  In the absence of this damping factor, in the Euclidean gradient flow the smallest eigenvalue can drop to zero in finite time, so \emph{the long time existence cannot hold for the Euclidean gradient flow.}

		\item The Riemannian gradient flow naturally preserves the positive cone $Sym^2_{\geq 0}$ together with the rank stratification on its boundary. The Euclidean gradient flow on $Sym^2_+$ has no such good property.

	\end{enumerate}

	The upshot is that the Riemannian gradient flow is much better behaved.

	\subsection{Time evolution formula}


	Suppose we are given a fixed orthogonal decomposition
	\[
	V=V_1\oplus V_2
	\]
	with respect to the inner product $|\cdot|_V^2$. Let
	$
	\mathcal{U}=\{ U\in \End(V): U(V_1)\subset V_1  \}.
	$
	We have a map $\mathcal{U}\to Sym^2_{\geq 0}$ by $U\mapsto \Sigma=U^T U$, and the differential of this map at a given point $U$ is 
	\begin{equation}\label{eqn:differentialdotU}
		\delta{U}\mapsto U^T \delta{U}+ \delta{U}^T U,\quad \delta{U}\in T_U\mathcal{U} .
	\end{equation}
	We denote 
	\[
	I_{V_1}= \sum_1^{d_{V_1}} e_i\otimes e_i \in Sym^2 V\simeq T_\Sigma^* Sym^2_{\geq 0} ,
	\]
	where $\{ e_i\}$ is any orthonormal basis of $V_1$ with respect to the inner product $|\cdot|_V^2$, and let 
	\[
	\dot{U}= \frac{1}{2} \sum_{i,j=1}^{d_{V_1}} U^i_j \partial_{U^i_j} \in T_U \mathcal{U}
	\]
	denote half of the Euler vector field\footnote{On any finite dimensional vector space $V$, the Euler vector field is $\sum_i x^i\partial_{x^i}$ in any linear coordinate system $x^i$.} on $\End(V_1)$, viewed as an element of 
	\[
	T_U\mathcal{U}\simeq \{ A \in \End{V}: A(V_1)\subset V_1\} \simeq \End(V_1)\oplus \Hom(V_2, V_1)\oplus \End(V_2)
	\]
	using the decomposition $V=V_1\oplus V_2$. The $\Hom(V_2, V_1)\oplus \End(V_2)$ component of $\dot{U}$ is zero.

	\begin{lem}
		Suppose $\Sigma\in Sym^2_+$.
		Under the metric $\mathfrak{g}$, the element of $T_\Sigma Sym^2_+$ metric dual to $I_{V_1}$ is given by the image under (\ref{eqn:differentialdotU}) of the tangent vector $\dot{U}\in T_U\mathcal{U}$. More concretely, the following two equivalent characterizations hold:
		\begin{equation}\label{eqn:IV1dual1}
			\langle U^T \dot{U}+ \dot{U}^T U, A\rangle = (A, I_{V_1})_{\mathfrak{g}}, \quad \forall A\in T_\Sigma^* Sym^2_+\simeq Sym^2 V,
		\end{equation}
		\begin{equation}\label{eqn:IV1dual2}
			( U^T \dot{U}+ \dot{U}^T U, \tilde{A} )_{ \mathfrak{g} } = \langle \tilde{A}, I_{V_1}\rangle , \quad \forall \tilde{A}\in T_\Sigma Sym^2_+\simeq Sym^2 V^*,
		\end{equation}
		where the angle bracket denotes the natural bilinear pairing of the dual spaces $Sym^2 V$ and $Sym^2 V^*$, while $(, )_{\mathfrak{g}}$ denotes the metric inner product on the cotangent/tangent spaces of $Sym^2_+$ respectively.
	\end{lem}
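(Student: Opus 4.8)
The plan is to verify the two displayed identities directly in the simultaneous eigenbasis of $\Sigma$ and $|\cdot|_V^2$ introduced in Section \ref{Sect:Riemannianmetriceigenbasis}, and then observe that \eqref{eqn:IV1dual1} and \eqref{eqn:IV1dual2} are equivalent reformulations of the same statement via the metric duality $\mathfrak{g}$. Since $\mathfrak{g}$ identifies $T_\Sigma^* Sym^2_+$ with $T_\Sigma Sym^2_+$, establishing \eqref{eqn:IV1dual1} for all $A\in Sym^2 V$ is exactly the assertion that the tangent vector $U^T\dot U+\dot U^T U$ is the metric-dual of $I_{V_1}$; pairing that tangent vector against an arbitrary cotangent vector and unwinding the definition of $(,)_{\mathfrak g}$ yields \eqref{eqn:IV1dual2}. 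So it suffices to prove one of them, say \eqref{eqn:IV1dual1}.

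First I would compute the left-hand tangent vector explicitly. Because $\dot U=\tfrac12\sum_{i,j\le d_{V_1}} U^i_j\,\partial_{U^i_j}$ is half the Euler vector field on $\End(V_1)$, acting as a derivation on $U$ it simply returns half of the $V_1$-block of $U$; concretely, writing $U$ in block form with respect to $V=V_1\oplus V_2$, the map $\delta U\mapsto U^T\delta U+\delta U^T U$ applied to $\dot U$ produces $U^T P_{V_1} U$ up to the factor coming from the two symmetric terms, where $P_{V_1}$ is the orthogonal projection onto $V_1$. The key point is that $U^T P_{V_1} U$, as an element of $Sym^2 V^*\simeq T_\Sigma Sym^2_+$, has a clean description: choosing the eigenbasis that simultaneously diagonalizes $\Sigma=\operatorname{diag}(\lambda_i)$ and $C=\mathrm{Id}$, one gets a concrete symmetric matrix whose entries involve the $\lambda_i$'s and the $V_1$-vs-$V_2$ index split. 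Then I would pair this against a general $A=\sum A^{ij}e_i\otimes e_j$ using the natural bilinear pairing, and separately compute $(A,I_{V_1})_{\mathfrak g}$ using formula \eqref{eqn:Riemannianmetric} (equivalently the diagonal form $|A|_{\mathfrak g}^2=\sum_{i,j}\lambda_i|A^{ij}|^2$, polarized), and check the two expressions agree term by term.

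The one genuine subtlety — the step I expect to be the main obstacle — is the interaction between the $GL(V_1)$-level object $\dot U$ and the fact that $\Sigma$ need not be block-diagonal with respect to $V_1\oplus V_2$, so that the eigenbasis of $\Sigma$ is not aligned with the decomposition $V=V_1\oplus V_2$. One must be careful that $I_{V_1}=\sum_{i\le d_{V_1}}e_i\otimes e_i$ is defined using an orthonormal basis of $V_1$ for $|\cdot|_V^2$ (hence is $P_{V_1}$ raised to $Sym^2 V$ via $C$), while the metric $\mathfrak g$ is read off in the $\Sigma$-eigenbasis; reconciling these requires tracking how $U$ intertwines the two. The cleanest route is probably to avoid diagonalizing and instead argue invariantly: show that for any $\delta U\in T_U\mathcal U$, writing $B=U^T\delta U+\delta U^T U$ for the corresponding tangent vector, the metric pairing $(U^T\dot U+\dot U^T U,\,B)_{\mathfrak g}$ equals $\tfrac12\langle B, I_{V_1}\rangle$ by a direct substitution of the definition of $\mathfrak g$ in terms of $\Sigma$ and $C$ together with $\Sigma=U^TCU$, reducing everything to the identity $\langle U^T P_{V_1}U, \text{(raise/lower)}\rangle$; this sidesteps the eigenbasis bookkeeping entirely and makes the factors of $\tfrac12$ transparent. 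I would present the invariant computation as the main line and relegate the eigenbasis check to a remark or omit it.
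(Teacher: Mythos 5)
Your overall strategy---compute the metric dual of $I_{V_1}$ explicitly and match it with the image of $\dot U$---is the same as the paper's, but two of your concrete intermediate claims are wrong and would derail the computation if carried through as stated. First, the image of $\dot U=\tfrac12 UP_{V_1}$ under (\ref{eqn:differentialdotU}) is $U^T\dot U+\dot U^TU=\tfrac12(\Sigma I_{V_1}+I_{V_1}\Sigma)$ (working in a $|\cdot|_V^2$-orthonormal basis adapted to $V_1\oplus V_2$), \emph{not} $U^TP_{V_1}U$; these agree only when $U_{12}=0$. For example, with $d=2$ and $U_{11}=U_{12}=U_{22}=1$, $U_{21}=0$, one finds $U^TP_{V_1}U=\left(\begin{smallmatrix}1&1\\1&1\end{smallmatrix}\right)$ while $\tfrac12(\Sigma I_{V_1}+I_{V_1}\Sigma)=\left(\begin{smallmatrix}1&1/2\\1/2&0\end{smallmatrix}\right)$; pairing $U^TP_{V_1}U$ against a symmetric $A$ gives $\Tr(AU^TP_{V_1}U)$, which is not the required $(A,I_{V_1})_{\mathfrak{g}}=\Tr(A\Sigma I_{V_1})$. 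Second, the identity you propose to verify, $(U^T\dot U+\dot U^TU,B)_{\mathfrak{g}}=\tfrac12\langle B,I_{V_1}\rangle$, carries a spurious factor of $\tfrac12$ relative to (\ref{eqn:IV1dual2}); the factor $\tfrac12$ has already been absorbed into the definition of $\dot U$ as \emph{half} of the Euler vector field.

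The ``subtlety'' you flag about the $\Sigma$-eigenbasis not being aligned with $V_1\oplus V_2$ is a red herring: no diagonalization of $\Sigma$ is needed anywhere. The invariant route you gesture at is precisely the paper's one-line argument. In any orthonormal basis of $|\cdot|_V^2$ whose first $d_{V_1}$ vectors span $V_1$, formula (\ref{eqn:Riemannianmetric}) reads $(A,A')_{\mathfrak{g}}=\tfrac12\Tr(A\Sigma A')+\tfrac12\Tr(A'\Sigma A)$ and $I_{V_1}$ is represented by $\text{diag}(1,\ldots,1,0,\ldots,0)$. The dual element $B$ is characterized by $\Tr(AB)=(A,I_{V_1})_{\mathfrak{g}}=\Tr\bigl(A\cdot\tfrac12(\Sigma I_{V_1}+I_{V_1}\Sigma)\bigr)$ for all symmetric $A$, whence $B=\tfrac12(\Sigma I_{V_1}+I_{V_1}\Sigma)=\tfrac12(U^TUI_{V_1}+I_{V_1}U^TU)=U^T\dot U+\dot U^TU$. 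Once you correct the two points above, your computation collapses to exactly this.
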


	\begin{proof}
		(Matrix notation proof)
		We compute in an orthonormal basis $\{e_i\}$ of the inner product $|\cdot|^2$, such that $e_1,\ldots e_{d_1}$ give an orthonormal basis of $V_1$, and $e_{d_1+1},\ldots e_d$ is an orthonormal basis of $V_2$. In this basis, the Riemannian metric $\mathfrak{g}$ on $T_\Sigma^* Sym^2_+$ defined in (\ref{eqn:Riemannianmetric}) can be written in matrix notation as 
		\[
		(A,A')_{\mathfrak{g}} = \frac{1}{2}\Tr (A\Sigma A')+ \frac{1}{2}\Tr (A'\Sigma A) ,
		\]
		where $A, A'$ are represented by symmetric matrices. 
		The element $I_{V_1}$ is represented by the diagonal matrix $\text{diag}(1,\ldots 1, 0,\ldots 0)$. The metric dual element of $I_{V_1}$ is by definition the unique element $B\in T_\Sigma Sym^2_+\simeq Sym^2 V^*$, such that for any $A\in T_\Sigma^* Sym^2_+$,
		\[
		\Tr (AB)= (A, I_{V_1})_{\mathfrak{g}}= \frac{1}{2} \Tr (A\Sigma I_{V_1}) + \frac{1}{2}  \Tr (I_{V_1}\Sigma A),
		\]
		so $B$ is represented by the matrix
		\[
		B= \frac{1}{2} (\Sigma I_{V_1}+ I_{V_1}\Sigma) = \frac{1}{2} (U^T U I_{V_1}+ I_{V_1} U^T U)=   (U^T \dot{U} + \dot{U}^T U).
		\]
		This is the image of the element $\dot{U}$ as required.
	\end{proof}

	\begin{proof}
		(Tensor notation proof) In any linear coordinate system $\{x^i\}$ on $V$, 
		the inner product $|\cdot|_V^2$ is $C_{ij}dx^idx^j$. The invertible linear map $U$ is $(U^i_j)$ in coordinates, and the relation $\Sigma=U^T U$ reads
		$
		\Sigma_{ij}= U_i^k C_{kl} U_j^l.
		$
		Here one needs to take care that the adjoint $U^T$ of $U$ involves the inner product $C_{ij}dx^idx^j$. Likewise, the image of $\dot{U}$ under the differential is
		\[
		U^i_c C_{ij} \dot{U}^j_d +  U^i_d C_{ij} \dot{U}^j_c.
		\]
		Using the definition (\ref{eqn:Riemannianmetric}) of the metric $\mathfrak{g}$, we can compute the dual element of $I_{V_1}$ as
		\[
		\frac{1}{2} (\Sigma_{ac} C_{bd} I_{V_1}^{ ab } + \Sigma_{bd} C_{ac} I_{V_1}^{ab})= \frac{1}{2} (U_a^i C_{ij} U_c^j C_{bd} I_{V_1}^{ ab } + U_b^i C_{ij} U_d^j C_{ac} I_{V_1}^{ab}) 
		\]
		This is the image of the tangent vector
		$
		(\dot{U})^i_j=  \frac{1}{2} U^i_a I_{V_1}^{ab} C_{bj}.
		$

		Now we observe that $I_{V_1}^{ab} C_{bj}$ defines the orthogonal projection operator $P_{V_1}$ to $V_1$ with respect to $|\cdot|_V^2$, so $\frac{1}{2} U^i_a I_{V_1}^{ab} C_{bj}$ defines the endomorphism $\frac{1}{2} U P_{V_1}$. By definition $U$ preserves the subspace $V_1$, while $P_{V_1}$ is identity on $V_1$ and zero on $V_2$. Thus $\dot{U}$ agrees with half of the Euler vector field on $\End(V_1)$. 
	\end{proof}

	We can define the partial trace of any $\Sigma\in Sym^2_{\geq 0}$ on the fixed subspace $V_1$,
	\[
	\Tr_{V_1} \Sigma= \sum_1^{d_{V_1}} \Sigma(e_i, e_i)= \langle \Sigma, I_{V_1}\rangle.
	\]
	where $\{ e_i\}$ is an orthonormal basis of $V_1$ with respect to the fixed inner product $|\cdot|_V^2$. Under the gradient flow (\ref{eqn:gradientflow}), this quantity evolves in time according to the following elegant formula. 
	\begin{prop}\label{lem:timeevolution}
		(Time evolution formula)
		\begin{equation}\label{eqn:timeevolutionformulapartialtrace}
			\frac{d}{dt} \Tr_{V_1} \Sigma(t)= -\langle DJ(U,\lambda), \dot{U} \rangle,
		\end{equation}
		where the RHS is the directional derivative of the functional $J(U,\lambda)$ on $\mathcal{U}$ evaluated at the point $U$ along the tangent vector $\dot{U}$ as above. 
	\end{prop}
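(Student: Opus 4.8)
The plan is to differentiate the defining identity $\Tr_{V_1}\Sigma(t)=\langle \Sigma(t),I_{V_1}\rangle$ and push the result through the two characterizations (\ref{eqn:IV1dual1})--(\ref{eqn:IV1dual2}) of the metric dual of $I_{V_1}$ established in the preceding lemma, closing with the chain rule for $J=\mathcal{J}\circ\pi$ already recorded in the proof of Proposition \ref{prop:Euclideanflow}.

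Concretely, since $I_{V_1}\in Sym^2 V$ is a fixed tensor, depending only on the fixed subspace $V_1$ and the fixed ambient inner product $|\cdot|_V^2$ and not on $\Sigma$, I would write
\[
\frac{d}{dt}\Tr_{V_1}\Sigma(t)=\Big\langle \tfrac{d}{dt}\Sigma(t),\,I_{V_1}\Big\rangle=-\big\langle \nabla^{\mathfrak{g}}\mathcal{J}(\Sigma;\lambda),\,I_{V_1}\big\rangle
\]
by the gradient flow equation (\ref{eqn:gradientflow}). Applying (\ref{eqn:IV1dual2}) with the tangent vector $\tilde A=-\nabla^{\mathfrak{g}}\mathcal{J}$ turns the right-hand side into $(U^T\dot U+\dot U^T U,\,-\nabla^{\mathfrak{g}}\mathcal{J})_{\mathfrak{g}}$, and the defining property of the Riemannian gradient, $(v,\nabla^{\mathfrak{g}}\mathcal{J})_{\mathfrak{g}}=\langle D\mathcal{J},v\rangle$ for every tangent vector $v$, collapses this to $-\langle D\mathcal{J}(\Sigma;\lambda),\,U^T\dot U+\dot U^T U\rangle$. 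Finally, since the differential of $\pi\colon U\mapsto U^T U$ at $U$ sends $\delta U$ to $U^T\delta U+\delta U^T U$, the chain rule gives $\langle DJ(U,\lambda),\delta U\rangle=\langle D\mathcal{J}(\Sigma;\lambda),\,U^T\delta U+\delta U^T U\rangle$, and specializing $\delta U=\dot U$ identifies the expression above with $-\langle DJ(U,\lambda),\dot U\rangle$, which is the claim. (One could equally route through (\ref{eqn:IV1dual1}), using $\langle \nabla^{\mathfrak{g}}\mathcal{J},I_{V_1}\rangle=(D\mathcal{J},I_{V_1})_{\mathfrak{g}}$ and then (\ref{eqn:IV1dual1}) with $A=D\mathcal{J}$.)

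The one point that genuinely needs care — and what I regard as the only real obstacle — is that the right-hand side refers to a lift $U\in\mathcal{U}$ of $\Sigma$, which is not canonical. I would first note that every $\Sigma\in Sym^2_+$ admits such a lift, by block (Schur-complement) factorization relative to $V=V_1\oplus V_2$, and that any two lifts in $\mathcal{U}$ differ by left multiplication by some $Q=Q_1\oplus Q_2\in O(V_1)\times O(V_2)$, since an orthogonal map carrying one lift to another must preserve $V_1$ and hence $V_2=V_1^{\perp}$. Because $J(QU)=\mathcal{J}(U^TQ^TQU)=J(U)$, while the tensor computation in the preceding lemma gives $\dot U=\tfrac12 UP_{V_1}$, which transforms as $\dot U\mapsto Q\dot U$ under $U\mapsto QU$, the scalar $\langle DJ(U,\lambda),\dot U\rangle$ depends only on $\Sigma$; this makes the statement well posed and shows it is immaterial whether $t\mapsto U(t)$ is taken to be the lifted flow (\ref{eqn:Uflow}) or merely an arbitrary smooth local lift of $\Sigma(t)$. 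Everything else is bookkeeping of the canonical pairing of $Sym^2 V$ with $Sym^2 V^*$ against the musical isomorphism of $\mathfrak{g}$, with no analytic input beyond the $C^1$ extension of $\mathcal{J}$ already assumed.
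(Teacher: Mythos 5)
Your proposal is correct and follows essentially the same route as the paper: differentiate $\langle\Sigma(t),I_{V_1}\rangle$ along the flow, convert via the metric-dual characterization of $I_{V_1}$ from the preceding lemma, and finish with the chain rule for $J=\mathcal{J}\circ\pi$. Your added verification that $\langle DJ(U,\lambda),\dot U\rangle$ is independent of the choice of lift $U\in\mathcal{U}$ is a worthwhile supplement not spelled out in the paper; the only thing you omit is the paper's one-line continuity argument extending the formula from $Sym^2_+$ to the boundary strata of $Sym^2_{\geq 0}$.
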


	\begin{proof}
		We first assume $\Sigma\in Sym^2_+$. 
		By the definition of the gradient flow, the time derivative is computed in terms of the gradient component,
		\[
		\frac{d}{dt} \Tr_{V_1} \Sigma= \langle \frac{d\Sigma(t) }{dt} , I_{V_1}\rangle= -\langle \nabla^{\mathfrak{g}} \mathcal{J}, I_{V_1}\rangle= -( D\mathcal{J}(\Sigma;\lambda), I_{V_1})_{\mathfrak{g}}.  
		\]
		Now using the characterization (\ref{eqn:IV1dual2}) of the metric dual element, we see
		\[
		\frac{d}{dt} \Tr_{V_1} \Sigma=  -( D\mathcal{J}(\Sigma;\lambda), I_{V_1})_{\mathfrak{g}}= - \langle D\mathcal{J}(\Sigma;\lambda), U^T \dot{U} + \dot{U}^T U\rangle. 
		\]
		Under the map $\mathcal{U}\to Sym^2_+$ defined by $\Sigma= U^T U$, the functional $\mathcal{J}(\Sigma, \lambda)$ pulls back to $J(U,\lambda)$, and the tangent vector $\dot{U}$ push forwards to $U^T \dot{U} + \dot{U}^T U$, so by the chain rule the above expression is equal to the directional derivative $-\langle DJ(U,\lambda), \dot{U} \rangle$.

		By continuity, the same formula holds for $\Sigma$ on the boundary of $Sym^2_{\geq 0}$.
	\end{proof}

	\section{Convergence to stationary points}\label{sect:convergence}

	In this Section we prove Theorem \ref{thm:convergence}.
	We assume that $X$ is a continuous random variable, and work in the setting of Theorem \ref{thm:longtimeexistence}. Thus by \cite[Corollary 6.2]{RuanLi},
	\begin{equation}
		\lim_{ \Sigma\to +\infty} \mathcal{J}(\Sigma;\lambda) = \frac{1}{2} \E[|Y|^2].  
	\end{equation}
	Without loss $\E[|  \E[Y|X]  |^2]>0$, so we have the trivial upper bound for $\Sigma\in Sym^2_+$ \cite[Lemma 2.3]{RuanLi} 
	\[
	\mathcal{J}(\Sigma;\lambda) < \frac{1}{2} \E[|Y|^2].
	\]
	Since $\mathcal{J}(\Sigma)$ is monotone non-increasing under the gradient flow, we know $\Sigma(t)$ stays within a bounded region inside $Sym^2_{\geq 0}$ (See Remark \ref{rmk:partialcompactification}). Moreover by (\ref{eqn:Jdecreases}), 
	\[
	\int_0^\infty |D\mathcal{J} |_{\mathfrak{g}}^2  dt\leq \mathcal{J}(\Sigma(0)) \leq \frac{1}{2}\E[|Y|^2].
	\]
	Working in an orthonormal basis of $|\cdot|_V^2$, we regard $D\mathcal{J}$ as symmetric matrices, and $|D\mathcal{J}|_{\mathfrak{g}}^2= \Tr(D\mathcal{J} \Sigma D\mathcal{J} ) $ implies
	\[
	\int_0^\infty \Tr( D\mathcal{J} \Sigma D\mathcal{J}      ) dt\leq \mathcal{J}(\Sigma(0)) \leq \frac{1}{2}\E[|Y|^2].
	\]
	This holds not just for the flow in $Sym^2_+$, but also for flows on the boundary of $Sym^2_{\geq 0}$, by a continuity argument.


	By compactness, we deduce
	there is a subsequence of time $t_i\to +\infty$, such that 
	$\Sigma(t_i)$ converges to some $\Sigma_0\in Sym^2_{\geq 0}$, and 
	$ \Tr( D\mathcal{J} \Sigma D\mathcal{J}      ) (t_i) \to 0.$
	However, by the continuity of the tensor field $D\mathcal{J}$, 
	\[
	\Tr( D\mathcal{J} \Sigma D\mathcal{J}      ) (t_i) \to  \Tr( D\mathcal{J}(\Sigma_0;\lambda) \Sigma_0 D\mathcal{J} (\Sigma_0;\lambda)     )=0.
	\]

	By Gram-Schmidt, we can write $\Sigma_0=U_0^TU_0$ for some $U_0\in \End(V)$. Thus
	\[
	\Tr (    (U_0 D\mathcal{J}(\Sigma_0;\lambda))^T (D\mathcal{J} (\Sigma_0;\lambda)U_0)       ) = \Tr( D\mathcal{J}(\Sigma_0;\lambda) \Sigma_0 D\mathcal{J} (\Sigma_0;\lambda)     )=0.
	\]
	hence $U_0 D\mathcal{J}(\Sigma_0;\lambda)=0$. This shows $D\mathcal{J} (\Sigma_0;\lambda) \Sigma_0=\Sigma_0 D\mathcal{J}(\Sigma_0;\lambda)=0$, hence for any matrix $A$, 
	\[
	\Tr(D\mathcal{J} (\Sigma_0;\lambda) (A \Sigma_0+ \Sigma_0 A^T)     ) =0.
	\]
	But the matrices $A \Sigma_0+ \Sigma_0 A^T$ span the tangent space of $\mathcal{S}_l$ at $\Sigma_0$, hence $D\mathcal{J}(\Sigma_0;\lambda)$ vanishes on the tangent space of 
	$\mathcal{S}_l$, namely $\Sigma_0$ is a stationary point.

	\begin{rmk}
		In applications, if we are looking for stationary points with low rank $l\leq l_0\ll d$, then we can initialize the flow in the low rank strata $\bigcup_{l\leq l_0} \mathcal{S}_l$.
		Under the conditions of Section \ref{sect:extensionflow}, 
		this subset is preserved under the flow, and is closed inside $Sym^2 V^*$, hence contains the subsequential limits of $\Sigma(t_i)$ as $t_i\to +\infty$. The low rank condition may lead to significant improvement in computational efficiency.
	\end{rmk}

	\section{Gradient flow dynamics: de-noising}

	\subsection{Gaussian noise: single variable case}\label{sect:Gaussiansingle}

	We start with the following simple situation. Consider a vector space $V\simeq \R^d$ with a given direct sum decomposition into $\R\oplus V'$, where $\R$ is a one-dimensional subspace. As usual $X$ is a random variable valued in $V$, so $X$ has two components $(X_1, X')\in \R\oplus V'$. We assume that $X_1$ is a \emph{Gaussian variable} $X_1\sim N(0, \alpha^2) $  independent of $X'$, and the random variable $\E[Y|X]=\E[Y|X']$ depends only on $X'$, and $\E[|Y|^2]<+\infty$. The distribution law of $X'$ is arbitrary. 
	In this section, the kernel function is not required to be rotationally invariant.

	Let $U\in \End(V)$ be block triangular, namely
	\[
	U= \begin{bmatrix}
		U_{11}, & U_{12}\\
		0, & U_{22}
	\end{bmatrix} \in \begin{bmatrix}
		\R, & \Hom(V', \R) \\
		0, & \End(V')
	\end{bmatrix}.
	\]
	We denote $H_s$ as the \emph{convolution type operation} 
	\[
	H_sf(x)= \frac{1}{\alpha \sqrt{2\pi s}  } \int_{ \R } f(x_1+ U_{11} y, x' ) \exp( -\frac{|y|^2}{2\alpha^2 s}) dy,\quad s > 0.
	\]
	This has a more probablistic interpretation: take a Gaussian random variable $Z\sim N(0,\alpha^2s)$, then
	\[
	H_s f(x)= \E_Z[   f(x+ (U_{11} Z,0) )  ]
	\]
	Note that $\{H_s\}_{s > 0}$ forms a Markov semigroup.

	\begin{lem}\label{lem:convolution1}
		Let $f$ be any bounded function on $V$, then
		\[
		\E [| Y-(H_s f)( (\sqrt{1-s} U_{11}X_1 +U_{12} X', U_{22} X'))|^2] \leq \E[|Y-f(UX)|^2].
		\]

	\end{lem}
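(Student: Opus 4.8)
The plan is to first exploit the hypothesis $\E[Y\mid X]=\E[Y\mid X']$ to replace $Y$ by its conditional mean (an $L^2$-orthogonality step), and then to recognise the convolved quantity as a conditional expectation of a distributional copy of $f(UX)$, so that a conditional Jensen inequality closes the estimate.

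First I would set $m(X'):=\E[Y\mid X]=\E[Y\mid X']$ and write $\Phi_0(X):=f(UX)$ and $\Phi_1(X):=(H_sf)\big((\sqrt{1-s}\,U_{11}X_1+U_{12}X',\,U_{22}X')\big)$, noting that both are bounded Borel functions of $X=(X_1,X')$ since $f$, hence $H_sf$, is bounded. Because $Y-m(X')$ is $L^2$-orthogonal to every square-integrable function of $X$, the Pythagorean identity gives, for $i=0,1$,
\[
\E[\,|Y-\Phi_i(X)|^2\,]=\E[\,|Y-m(X')|^2\,]+\E[\,|m(X')-\Phi_i(X)|^2\,],
\]
so the lemma reduces to proving $\E[\,|m(X')-\Phi_1(X)|^2\,]\le \E[\,|m(X')-\Phi_0(X)|^2\,]$, a statement that no longer involves $Y$.

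Next I would introduce, on an enlarged probability space, a variable $Z\sim N(0,\alpha^2 s)$ independent of $(X_1,X',Y)$ and set $W:=\sqrt{1-s}\,X_1+Z$ (which requires $s\le 1$). Because $X_1\sim N(0,\alpha^2)$ is independent of $X'$ while $Z$ is independent of everything, $W\sim N(0,\alpha^2)$ is independent of $X'$, so $(W,X')$ and $(X_1,X')$ have the same joint law; since $m$ depends only on $X'$, it follows that the pair $(m(X'),\,f(U_{11}W+U_{12}X',U_{22}X'))$ has the same law as $(m(X'),\Phi_0(X))$. Unwinding the definition of $H_s$,
\[
\Phi_1(X)=\E_Z[\,f(U_{11}(\sqrt{1-s}\,X_1+Z)+U_{12}X',\,U_{22}X')\,]=\E[\,f(U_{11}W+U_{12}X',\,U_{22}X')\mid X_1,X'\,].
\]
Writing $\Psi:=f(U_{11}W+U_{12}X',U_{22}X')$, I then have $\Phi_1(X)=\E[\Psi\mid X_1,X']$ together with $\E[\,|m(X')-\Psi|^2\,]=\E[\,|m(X')-\Phi_0(X)|^2\,]$, and since $m(X')$ is $\sigma(X_1,X')$-measurable, conditional Jensen gives
\[
\E[\,|m(X')-\Phi_1(X)|^2\,]=\E[\,|\E[\,m(X')-\Psi\mid X_1,X'\,]|^2\,]\le \E[\,|m(X')-\Psi|^2\,]=\E[\,|m(X')-\Phi_0(X)|^2\,].
\]
Combined with the first reduction, this is the claim.

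I expect the only genuinely delicate point to be the first step: one cannot couple $Y$ directly against a resampled $X_1$, because the full conditional law of $Y$ given $X$ may depend on $X_1$ even though its mean does not, and it is precisely the orthogonal projection onto functions of $X$ that discards this dependence. Everything afterwards is a routine conditional-Jensen estimate, and the rescaling by $\sqrt{1-s}$ is exactly the factor that makes $\sqrt{1-s}\,X_1+Z$ again $N(0,\alpha^2)$-distributed, so that trading $X_1$ for $W$ costs nothing.
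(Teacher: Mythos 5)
Your proof is correct and takes essentially the same route as the paper's: reduce to the case where $Y$ is replaced by $\E[Y\mid X]=m(X')$ (your Pythagorean step is the paper's subtraction of $\E[\mathrm{Var}(Y\mid X)]$ from both sides), split the $N(0,\alpha^2)$ variable into independent Gaussian pieces of variances $\alpha^2 s$ and $\alpha^2(1-s)$, recognize $H_s f$ as a conditional expectation, and close with conditional Jensen. The only difference is cosmetic -- the paper writes $X_1\stackrel{d}{=}Z_1+Z_2$ and conditions on $(Z_2,X')$, while you adjoin a fresh $Z$ to form $W=\sqrt{1-s}\,X_1+Z$ and condition on $(X_1,X')$, which is the same coupling viewed from the other end.
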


	\begin{proof}
		We can replace $Y$ by $\E[Y|X]$ on both sides, by subtracting the same quantity $\E[  {\rm Var}(Y|X)  ]$ on both sides. By assumption $\E[Y|X]=\E[Y|X']$ depends only on $X'$, so without loss $Y$ is a function of $X'$.

		The Gaussian variable $X_1$ is equivalent in law to the sum of two independent Gaussian variables 
		$Z_1, Z_2$ of variance $\alpha^2 s$ and $\alpha^2(1-s)$. Since $X_1, Z_1, Z_2$ are independent from $X'$, we have
		\[
		\E[|Y-f(UX)|^2] = \E[  |Y-f(U(X_1, X'))|^2  ]=  \E[|Y-f(U(Z_1+Z_2, X'))|^2] .
		\]
		Conditional on the value of $Z_2, X'$, the probablistic interpretation of $H_sf$ gives
		\begin{equation*}
			\begin{split}
				&     | Y-(H_s f)(U(Z_2, X'))|^2
				\\
				=&| Y-\E_{Z_1}[ f(U(Z_1+Z_2, X'))|  Z_2, X']  |^2
				\\
				=|& \E[ Y- f(U(Z_1+Z_2, X'))|  Z_2, X']  |^2
				\\
				\leq &  \E[      |Y-f(U(Z_1+Z_2), X')   |^2        |Z_2, X'  ].
			\end{split}
		\end{equation*}
		The third line uses that $Y$ 
		is a function of $X'$, and the fourth line uses Cauchy-Schwarz. 
		By integrating further in $Z_2, X'$, we obtain
		\begin{equation}
			\E [| Y-(H_s f)(U(Z_2, X'))|^2]\leq   \E[|Y-f(U(Z_1+Z_2, X'))|^2] =   \E[|Y-f(UX)|^2].
		\end{equation}

		On the other hand, by the change of variable formula $Z_2$ is equivalent in law to the Gaussian variable $\sqrt{1-s} X_1$, and since $Z_2, X_1$ are both independent from $Y, X'$, we deduce
		\[
		\begin{split}
			\E [| Y-(H_s f)(U(Z_2, X'))|^2] 
			= \E [| Y-(H_s f)(U( (\sqrt{1-s} X_1, X'))|^2] .
		\end{split}
		\]
		Combining the above,
		\[
		\E [| Y-(H_s f)(U( (\sqrt{1-s} X_1, X'))|^2] \leq \E[|Y-f(UX)|^2]
		\]
		as required.
	\end{proof}

	\begin{lem}\label{lem:convolution2}
		Let $f\in \mathcal{H}$. 
		The Fourier transform of the function $H_sf $ is 
		\[
		\widehat{ H_s f}(\omega)=  e^{-2\pi^2\alpha^2 U_{11}^2\omega_1^2 s} \hat{f}(\omega).
		\]
	\end{lem}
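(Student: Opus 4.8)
The plan is to compute the Fourier transform of $H_s f$ directly from its integral definition, exploiting the fact that $H_s$ only convolves in the $x_1$-direction along the fixed vector $U_{11}$. Concretely, writing $x=(x_1,x')\in\R\oplus V'$ and recalling
\[
H_sf(x)= \frac{1}{\alpha\sqrt{2\pi s}}\int_{\R} f(x_1+U_{11}y, x')\exp\!\Big(-\frac{|y|^2}{2\alpha^2 s}\Big)\,dy,
\]
I would first change variables $z=U_{11}y$ (assuming $U_{11}\neq 0$; the degenerate case $U_{11}=0$ gives $H_s=\mathrm{id}$ and the claimed formula trivially holds since the exponential prefactor is then $1$) to rewrite $H_s f$ as a genuine convolution in the first coordinate: $H_sf(x)=(g_s *_{x_1} f)(x)$ where $g_s$ is the centered Gaussian density on $\R$ with variance $\alpha^2 U_{11}^2 s$.

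Next I would invoke the convolution theorem in the $x_1$-variable only, keeping $x'$ as a parameter: the Fourier transform factorizes as $\widehat{H_sf}(\omega_1,\omega') = \widehat{g_s}(\omega_1)\,\hat f(\omega_1,\omega')$. Then I would substitute the standard Gaussian Fourier transform identity — with the paper's convention $\hat h(\omega)=\int h(x)e^{-2\pi i\langle x,\omega\rangle}dx$ (consistent with the kernel definitions earlier, e.g. Example~\ref{eg:Gaussiankernel} where a Gaussian of spatial width scales to $e^{-\pi^2\beta^{-1}|\omega|^2}$ in frequency) — giving $\widehat{g_s}(\omega_1)=\exp(-2\pi^2 \alpha^2 U_{11}^2 \omega_1^2 s)$. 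Matching this against the Gaussian kernel computation in Example~\ref{eg:Gaussiankernel} is a useful sanity check on the constant: a density $\propto e^{-z^2/(2\sigma^2)}$ has Fourier transform $e^{-2\pi^2\sigma^2\omega^2}$, and here $\sigma^2=\alpha^2U_{11}^2 s$. Combining the two displayed identities yields exactly $\widehat{H_sf}(\omega)=e^{-2\pi^2\alpha^2 U_{11}^2\omega_1^2 s}\hat f(\omega)$.

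The only genuine subtlety — not really an obstacle, but the point requiring a word of care — is justifying that $H_sf$ lies in a class where the Fourier transform and the convolution theorem apply, given only $f\in\mathcal{H}$. Since $\mathcal{H}$ embeds continuously in $C^0$ with $\norm{f}_{C^0}\le\norm{f}_{\mathcal{H}}$, the function $f$ is bounded, so the Gaussian-weighted integral defining $H_sf$ converges absolutely and $H_sf$ is bounded; moreover convolution against an $L^1$ Gaussian is a bounded operation and the identity $\widehat{g_s * h}=\widehat{g_s}\,\hat h$ holds in the tempered-distribution sense, which is all that is needed since $\hat f$ is already a well-defined function (indeed $|\hat f|^2/k_V$ is integrable). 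Alternatively, one can avoid distributions entirely by first verifying the formula for $f$ in the dense subclass spanned by reproducing kernels $K(x,\cdot)$ — whose Fourier transforms are explicit, $k_V(\omega)e^{2\pi i\langle\omega,x\rangle}$ — where everything is classical Schwartz-type computation, and then passing to the limit using continuity of both sides in the $\mathcal{H}$-norm. I would present the direct computation as the main line and relegate the integrability/justification remark to a single sentence.
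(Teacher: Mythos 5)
Your proposal is correct and is essentially the paper's argument: the paper computes $\widehat{H_sf}$ by a direct Fubini interchange (the inner $x$-integral produces $\hat f(\omega)e^{2\pi i\omega_1 U_{11}y}$, and the remaining $y$-integral is the Gaussian Fourier transform), which is exactly your convolution-theorem computation in the $x_1$-variable, with the same constant $e^{-2\pi^2\alpha^2U_{11}^2\omega_1^2 s}$. Your extra remarks on the degenerate case $U_{11}=0$ and on justifying the interchange (via $\hat f\in L^1$, which the paper also notes follows from $f\in\mathcal{H}$) are welcome but do not change the route.
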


	\begin{proof}
		Since $f\in \mathcal{H}$, we know $\hat{f}\in L^1$.
		We compute the Fourier transform 
		\[
		\begin{split}
			& \int_{V} H_sf(x)e^{-2\pi i \langle \omega, x\rangle} dx
			\\
			& = \frac{1}{\alpha \sqrt{2\pi s}  } \int_{ \R }\int_{V} f(x_1+ U_{11} y, x' ) \exp( -\frac{|y|^2}{2\alpha^2 s}) e^{-2\pi i \langle \omega, x\rangle} dx dy
			\\
			& =
			\frac{1}{\alpha \sqrt{2\pi s}  } \hat{f}(\omega) \int_{ \R } \exp( -\frac{|y|^2}{2\alpha^2 s}) e^{2\pi i \langle \omega_1, U_{11}y\rangle}  dy
			\\
			& = e^{-2\pi^2\alpha^2U_{11}^2 \omega_1^2 s} \hat{f}(\omega)
		\end{split}
		\]
		as  required.
	\end{proof}

	We now consider a family of matrices 
	\[
	U_s= \begin{bmatrix}
		\sqrt{1-s}U_{11}, & U_{12}\\
		0, & U_{22}
	\end{bmatrix} \in \begin{bmatrix}
		\R, & \Hom(V', \R) \\
		0, & \End(V')
	\end{bmatrix}.
	\]
	We will be interested in the minimizer $f_U$ of the loss function
	\[
	I(f, U, \lambda)= \frac{1}{2} \E[|Y-f(UX)|^2] + \frac{\lambda}{2} \norm{f}_{\mathcal{H}}^2,\quad J(U,\lambda)=\min_f I(f, U, \lambda).
	\]

	\begin{cor}\label{cor:convolution}
		We have
		\[
		J(U_s,\lambda)\leq  J(U,\lambda)-  \frac{\lambda}{2}\int_{V^*} (1-e^{-4\pi^2\alpha^2U_{11}^2\omega_1^2 s}) \frac{ |\hat{f}_U|^2}{k_V} d\omega.
		\]
	\end{cor}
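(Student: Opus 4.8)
The plan is to feed the convolved function $H_s f_U$ into the variational problem defining $J(U_s,\lambda)$ as a test function, and to estimate the two terms of the loss separately: the fitting error by Lemma \ref{lem:convolution1}, and the Hilbert norm by Lemma \ref{lem:convolution2}.

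First I would record that $f_U\in\mathcal{H}$ is automatically a bounded function, since $\mathcal{H}$ embeds continuously into $C^0$ with $\norm{f_U}_{C^0}\leq\norm{f_U}_{\mathcal{H}}$. Hence both Lemma \ref{lem:convolution1} (which requires $f$ bounded) and Lemma \ref{lem:convolution2} (which requires $f\in\mathcal{H}$) are applicable with $f=f_U$. This is the only point that needs a moment of care; the rest is assembly.

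Next, by definition of $J(U_s,\lambda)$ as the infimum of $I(\cdot,U_s,\lambda)$ over $\mathcal{H}$,
\[
J(U_s,\lambda)\leq I(H_sf_U,U_s,\lambda)=\frac{1}{2}\E[|Y-(H_sf_U)(U_sX)|^2]+\frac{\lambda}{2}\norm{H_sf_U}_{\mathcal{H}}^2.
\]
For the first term, note that $U_sX=(\sqrt{1-s}\,U_{11}X_1+U_{12}X',\,U_{22}X')$, so Lemma \ref{lem:convolution1} applied to $f=f_U$ yields $\E[|Y-(H_sf_U)(U_sX)|^2]\leq\E[|Y-f_U(UX)|^2]$. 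For the second term, Lemma \ref{lem:convolution2} gives $\widehat{H_sf_U}(\omega)=e^{-2\pi^2\alpha^2U_{11}^2\omega_1^2 s}\hat{f}_U(\omega)$, whence, by the definition of the $\mathcal{H}$-norm,
\[
\norm{H_sf_U}_{\mathcal{H}}^2=\int_{V^*}e^{-4\pi^2\alpha^2U_{11}^2\omega_1^2 s}\frac{|\hat{f}_U|^2}{k_V}\,d\omega=\norm{f_U}_{\mathcal{H}}^2-\int_{V^*}(1-e^{-4\pi^2\alpha^2U_{11}^2\omega_1^2 s})\frac{|\hat{f}_U|^2}{k_V}\,d\omega.
\]

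Combining the two estimates,
\[
J(U_s,\lambda)\leq\frac{1}{2}\E[|Y-f_U(UX)|^2]+\frac{\lambda}{2}\norm{f_U}_{\mathcal{H}}^2-\frac{\lambda}{2}\int_{V^*}(1-e^{-4\pi^2\alpha^2U_{11}^2\omega_1^2 s})\frac{|\hat{f}_U|^2}{k_V}\,d\omega,
\]
and since $f_U$ is the minimizer for $I(\cdot,U,\lambda)$, the first two terms on the right equal $I(f_U,U,\lambda)=J(U,\lambda)$. This is exactly the claimed inequality. I do not anticipate a genuine obstacle: the argument is a direct combination of the two preceding lemmas plus the boundedness of $f_U$ furnished by the RKHS embedding, and no extra integrability or regularity input is needed beyond $\E[|Y|^2]<+\infty$, which is already in force.
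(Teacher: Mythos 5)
Your proposal is correct and follows exactly the paper's argument: use $H_s f_U$ as a test function for $J(U_s,\lambda)$, bound the fitting term by Lemma \ref{lem:convolution1}, and compute the Hilbert-norm contraction from Lemma \ref{lem:convolution2}. The only cosmetic difference is that you justify the boundedness of $f_U$ via the $C^0$ embedding of $\mathcal{H}$, whereas the paper notes $\hat{f}_U\in L^1$; both are immediate and equivalent for this purpose.
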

	
	\begin{proof}
		Given $U$, the minimizer $f=f_U$ is in $\mathcal{H}$, so in particular $\hat{f}\in L^1$ and $f$ is bounded. We observe the norm contraction property
		\[
		\norm{ H_s f}_{\mathcal{H}}^2= \int_{V^*} e^{-4\pi^2\alpha^2U_{11}^2\omega_1^2 s} \frac{ |\hat{f}(\omega)|^2}{k_V} d\omega\leq  \int_{V^*}  \frac{ |\hat{f}|^2}{k_V} d\omega= \norm{f}_{\mathcal{H}}^2<+\infty,
		\]
		so $H_s f\in \mathcal{H}$. We use $H_sf$ as a test function. By Lemma \ref{lem:convolution1} and \ref{lem:convolution2},
		\[
		\begin{split}
			& I(H_s f, U_s, \lambda)=
			\frac{1}{2}\E [| Y-(H_s f)( U_s X)|^2]+ \frac{ \lambda  }{2} \norm{H_sf}_{\mathcal{H}}^2 
			\\
			& \leq \frac{1}{2}\E[|Y-f(UX)|^2]+ \frac{  \lambda }{2}\int_{V^*} e^{-4\pi^2\alpha^2U_{11}^2\omega_1^2 s} \frac{ |\hat{f}|^2}{k_V} d\omega
			\\
			& = J(U,\lambda)-  \frac{\lambda }{2}\int_{V^*} (1-e^{-4\pi^2\alpha^2U_{11}^2\omega_1^2 s}) \frac{ |\hat{f}|^2}{k_V} d\omega
		\end{split}.
		\]
		This implies the Corollary.
	\end{proof}

	\begin{cor}\label{cor:Gaussianonevariable} 
		
		The `partial derivative' of $J(U,\lambda)$ fixing $U_{12}, U_{22}$ satisfies
		\[
		\limsup_{s\to 0^+} \frac{J(U_s, \lambda)-J(U,\lambda)}{s/2}  \leq -4\pi^2\alpha^2 U_{11}^2 \lambda\int_{V^*} \omega_1^2  \frac{ |\hat{f}_U|^2}{k_V} d\omega \leq 0.
		\]
		
	\end{cor}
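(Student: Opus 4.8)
The plan is to extract Corollary~\ref{cor:Gaussianonevariable} from Corollary~\ref{cor:convolution} by dividing through by $s/2$ and passing to the limit $s\to 0^+$. Starting from the inequality in Corollary~\ref{cor:convolution},
\[
\frac{J(U_s,\lambda)-J(U,\lambda)}{s/2} \leq -\lambda\int_{V^*} \frac{1-e^{-4\pi^2\alpha^2 U_{11}^2\omega_1^2 s}}{s}\,\frac{|\hat f_U|^2}{k_V}\,d\omega,
\]
so the task reduces to controlling the limit of the right-hand side. First I would fix the (finite) quantity $M=\int_{V^*}\omega_1^2\,|\hat f_U|^2/k_V\,d\omega$, which is finite because $f_U\in\mathcal H$ and, under the standing smoothness/decay assumptions on the kernel, the weight $k_V(\omega)^{-1}$ dominates a polynomial in $\omega$ of degree at least two — concretely this is exactly the statement that $\|x\mapsto\partial_1 f_U\|_{\mathcal H'}$ is finite for a slightly weaker RKHS, or simply that $f_U$ lies in the domain of the generator of the semigroup $\{H_s\}$; for the Sobolev kernel with $\gamma>1$ and the Gaussian kernel this is immediate. (If one wants to avoid any extra hypothesis, one can instead argue directly with Fatou, see below.)

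The key elementary inequality is $0\leq 1-e^{-a}\leq a$ for $a\geq0$, applied with $a=4\pi^2\alpha^2U_{11}^2\omega_1^2 s$. This gives the pointwise bound
\[
0\leq \frac{1-e^{-4\pi^2\alpha^2U_{11}^2\omega_1^2 s}}{s}\,\frac{|\hat f_U|^2}{k_V} \leq 4\pi^2\alpha^2U_{11}^2\,\omega_1^2\,\frac{|\hat f_U|^2}{k_V},
\]
whose right-hand side is integrable with integral $4\pi^2\alpha^2U_{11}^2 M<\infty$. As $s\to0^+$, the integrand converges pointwise to $4\pi^2\alpha^2U_{11}^2\,\omega_1^2\,|\hat f_U|^2/k_V$ (since $(1-e^{-as})/s\to a$). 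By the dominated convergence theorem the integral converges to $4\pi^2\alpha^2U_{11}^2 M$, and therefore
\[
\limsup_{s\to0^+}\frac{J(U_s,\lambda)-J(U,\lambda)}{s/2} \leq -4\pi^2\alpha^2U_{11}^2\lambda\int_{V^*}\omega_1^2\,\frac{|\hat f_U|^2}{k_V}\,d\omega,
\]
which is the claimed bound; nonnegativity of the integral gives the final $\leq 0$. Alternatively, without invoking finiteness of $M$ one may simply apply Fatou's lemma to the nonnegative integrands to get $\liminf_{s\to0}\int(\cdots)\geq 4\pi^2\alpha^2U_{11}^2 M$, which combined with the sign already yields the same $\limsup$ inequality with $M$ possibly $+\infty$ (in which case the right-hand side is $-\infty$ and the statement holds trivially, with the second inequality reading $-\infty\leq0$).

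The only genuinely delicate point — and the one I would be most careful about — is the interchange of limit and integral, i.e.\ justifying that the dominating function $\omega_1^2\,|\hat f_U|^2/k_V$ is integrable; everything else is the one-line inequality $1-e^{-a}\le a$ plus monotone/dominated convergence. I expect this integrability to follow cleanly from the kernel assumptions already in force (which, via Proposition~\ref{prop:continuousextensionJ} and its hypotheses, force $k_V^{-1}$ to grow at least quadratically), but if one prefers a hypothesis-free argument the Fatou route above sidesteps it entirely, at the cost of allowing the bound to be vacuous when $M=+\infty$. No delicacy arises from $U_s$ depending on $s$, since $U_s\to U$ and we are only comparing $J(U_s,\lambda)$ to $J(U,\lambda)$ through the already-established Corollary~\ref{cor:convolution}.
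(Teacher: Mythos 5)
Your proposal is correct, and its ``alternative'' route is in fact the paper's own proof: the paper divides the inequality of Corollary~\ref{cor:convolution} by $s/2$ and applies Fatou's lemma to the nonnegative integrands $\frac{1-e^{-4\pi^2\alpha^2U_{11}^2\omega_1^2 s}}{s}\,\frac{|\hat f_U|^2}{k_V}$, exactly as in your fallback argument; this covers the case where $\int_{V^*}\omega_1^2\,|\hat f_U|^2/k_V\,d\omega=+\infty$, which the paper explicitly allows (see the caveat in the proof of Lemma~\ref{lem:expdecayrate}). Your primary route via dominated convergence is also valid \emph{when} that integral is finite, but the justification you sketch for finiteness is not sound: the observation that $k_V(\omega)^{-1}$ grows at least quadratically works \emph{against} integrability rather than for it (a larger weight makes the weighted $L^2$ condition more demanding), and membership $f_U\in\mathcal H$ alone does not place $f_U$ in the domain of the generator of $\{H_s\}$. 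Since the statement only claims a $\limsup$ bound and tolerates a right-hand side equal to $-\infty$, the Fatou argument should be taken as the proof, with the dominated-convergence refinement reserved for situations where the extra integrability is separately established; with that adjustment your write-up matches the paper.
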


	\begin{proof}
		By Corollary \ref{cor:convolution},
		\[
		\begin{split}
			&	\limsup_{s\to 0^+} \frac{J(U_s, \lambda)-J(U,\lambda)}{s/2}  \leq \limsup_{s\to 0^+} - \lambda \int_{V^*} \frac{(1-e^{-4\pi^2\alpha^2  U_{11}^2 \omega_1^2 s})}{s} \frac{ |\hat{f}_U|^2}{k_V} d\omega
			\\
			\leq &  -4\pi^2\alpha^2 U_{11}^2\lambda\int_{V^*} \omega_1^2  \frac{ |\hat{f}_U|^2}{k_V} d\omega .
		\end{split}
		\]
		Here the second line follows from Fatou's Lemma. 
	\end{proof}

	\begin{lem}\label{lem:expdecayrate}
		Suppose $\E[|Y|^2] <+\infty$.
		For $U$ inside any given compact subset in $\End(V)$ where $\E[Y|UX]$ is nonzero (or equivalently, $J(U,\lambda)<\frac{1}{2}\E[|Y|^2]$), we have
		\[
		\inf_U\int_{V^*} \omega_1^2  \frac{ |\hat{f}_U|^2}{k_V} d\omega >0.
		\]

	\end{lem}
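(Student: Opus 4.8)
The plan is a weak--compactness and contradiction argument. Write $\mathcal{C}$ for the given compact subset of $\End(V)$, on which, by hypothesis and \cite[Lemma 2.3]{RuanLi}, $J(U,\lambda)<\tfrac12\E[|Y|^2]$ for every $U$. Suppose the conclusion fails, so there is a sequence $U_n\in\mathcal{C}$ with $\int_{V^*}\omega_1^2\,|\hat f_{U_n}|^2/k_V\,d\omega\to 0$. The trivial upper bound \cite[Lemma 2.4]{RuanLi} gives $\lambda\norm{f_{U_n}}_{\mathcal{H}}^2\leq\E[|Y|^2]$, so the minimizers $f_{U_n}$ lie in a fixed ball of $\mathcal{H}$. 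Passing to a subsequence, I would arrange simultaneously $U_n\to U_\infty\in\mathcal{C}$ (compactness of $\mathcal{C}$) and $f_{U_n}\rightharpoonup f_\infty$ weakly in $\mathcal{H}$.

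Next I would identify $f_\infty=0$. Under the isometry $\mathcal{H}\to L^2(V^*)$, $f\mapsto\hat f/\sqrt{k_V}$, the functional $f\mapsto\int_{V^*}\omega_1^2|\hat f|^2/k_V\,d\omega$ becomes $g\mapsto\int_{V^*}\omega_1^2|g|^2\,d\omega$, the quadratic form of the positive (possibly unbounded) multiplication operator by $\omega_1^2$. Such a form is weakly sequentially lower semicontinuous: writing it as $\sup_R\int_{V^*}\min(\omega_1^2,R^2)|g|^2\,d\omega$, each truncated form $g\mapsto\langle W_R g,g\rangle$ with $W_R=\min(\omega_1^2,R^2)$ bounded and nonnegative satisfies $\liminf_n\langle W_R g_n,g_n\rangle\geq\langle W_R g,g\rangle$ whenever $g_n\rightharpoonup g$ (split $g_n=g+r_n$, $r_n\rightharpoonup0$; the cross terms vanish and $\langle W_R r_n,r_n\rangle\geq0$), and a supremum of weakly l.s.c.\ functions is weakly l.s.c. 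Hence $\int_{V^*}\omega_1^2|\hat f_\infty|^2/k_V\,d\omega\leq\liminf_n\int_{V^*}\omega_1^2|\hat f_{U_n}|^2/k_V\,d\omega=0$, so $\hat f_\infty$ is supported on the Lebesgue-null hyperplane $\{\omega_1=0\}\subset V^*$, forcing $\hat f_\infty\equiv0$ and $f_\infty=0$.

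Then I would push this limit through the loss functional. For each $x$, $f_{U_n}(U_nx)=\langle f_{U_n},K(U_nx,\cdot)\rangle_{\mathcal{H}}$, and since $\norm{K(a,\cdot)-K(b,\cdot)}_{\mathcal{H}}^2=2K(0)-2\,\mathrm{Re}\,K(a-b)\to0$ as $a\to b$ (the reproducing kernel $K$ being continuous, as the Fourier transform of $k_V\in L^1$), the vectors $K(U_nx,\cdot)$ converge strongly in $\mathcal{H}$ to $K(U_\infty x,\cdot)$; pairing a strongly convergent sequence with the weakly convergent $f_{U_n}$ yields $f_{U_n}(U_nx)\to f_\infty(U_\infty x)=0$ for every $x$. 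The pointwise bound $|f_{U_n}(U_nx)|\leq\norm{f_{U_n}}_{\mathcal{H}}\leq(\E[|Y|^2]/\lambda)^{1/2}$ together with dominated convergence (expanding the square, the quadratic term $\E[|f_{U_n}(U_nX)|^2]$ and the cross term $\E[Y\,\overline{f_{U_n}(U_nX)}]$ both tend to $0$) gives $\E[|Y-f_{U_n}(U_nX)|^2]\to\E[|Y|^2]$, hence $J(U_n,\lambda)\geq\tfrac12\E[|Y-f_{U_n}(U_nX)|^2]\to\tfrac12\E[|Y|^2]$. On the other hand $J(\cdot,\lambda)=\inf_f I(f,\cdot,\lambda)$ is upper semicontinuous on $\End(V)$ (for fixed $f\in\mathcal{H}$, $U\mapsto I(f,U,\lambda)$ is continuous by dominated convergence, using $\norm{f}_{C^0}\leq\norm{f}_{\mathcal{H}}$), so it attains a maximum on the compact set $\mathcal{C}$, which by the hypothesis is strictly below $\tfrac12\E[|Y|^2]$; thus $J(U_n,\lambda)\leq\sup_{\mathcal{C}}J<\tfrac12\E[|Y|^2]$, contradicting $J(U_n,\lambda)\to\tfrac12\E[|Y|^2]$.

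I expect the main obstacle to be the ``moving evaluation point'': one must pass to the limit in $f_{U_n}(U_nX)$ where both the function and the point at which it is evaluated vary, and weak convergence of $f_{U_n}$ alone is insufficient. This is exactly what the norm-continuity of $x\mapsto K(x,\cdot)\in\mathcal{H}$ repairs, converting the evaluation into a weak--strong pairing; the other delicate point, the lower semicontinuity of the unbounded weighted Dirichlet-type functional, is handled by the truncation argument above.
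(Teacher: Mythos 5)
Your proof is correct, and it takes a genuinely different route from the paper's. The paper argues directly that $U\mapsto\int_{V^*}\omega_1^2|\hat f_U|^2/k_V\,d\omega$ is sequentially lower semicontinuous on the compact set, and the key input is the \emph{strong} $\mathcal{H}$-norm continuity of the minimizer map $U\mapsto f_U$ imported from the companion paper (\cite[Corollary 2.9]{RuanLi}); combined with the truncation $\{|\omega|\leq R\}$ (the same device as your truncated weights $W_R$) and the observation that a vanishing limit value would force $f_{U_\infty}=0$, hence $\E[Y|U_\infty X]=0$, this settles the lemma in a few lines. You avoid that continuity result entirely: you only use the trivial bound $\lambda\norm{f_{U_n}}_{\mathcal{H}}^2\leq\E[|Y|^2]$, weak compactness, weak lower semicontinuity of the weighted form, the reproducing-kernel identity $f_{U_n}(U_nx)=\langle f_{U_n},K(U_nx,\cdot)\rangle_{\mathcal{H}}$ to convert the moving evaluation point into a weak--strong pairing, and upper semicontinuity of $J=\inf_f I(f,\cdot,\lambda)$ to get the uniform gap $\sup_{\mathcal{C}}J<\tfrac12\E[|Y|^2]$. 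What each buys: your argument is self-contained at the level of this paper (no appeal to the companion paper's stability theory for minimizers, only to the trivial upper bound and the equivalence $\E[Y|UX]\neq 0\Leftrightarrow J(U,\lambda)<\tfrac12\E[|Y|^2]$), at the cost of being longer; the paper's argument is shorter and, via strong convergence, does not need to identify the weak limit or re-derive the degeneration of the loss, but it rests on a nontrivial external ingredient. All the delicate steps you flag (weak l.s.c.\ of the unbounded multiplication form, the weak--strong pairing, dominated convergence with the uniform $C^0$ bound $\norm{f}_{C^0}\leq\norm{f}_{\mathcal{H}}$, and attainment of the maximum of an u.s.c.\ function on a compact set) are handled correctly.
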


	\begin{proof}
		It suffices to show that for any convergent sequence $U_i\to U_\infty$ in this compact set, then 
		\[
		\liminf \int_{V^*} \omega_1^2  \frac{ |\hat{f}_{U_i}|^2}{k_V} d\omega  \geq  \int_{V^*}  \omega_1^2  \frac{ |\hat{f}_{U_\infty}|^2}{k_V} d\omega .
		\]
		If the RHS is zero, then $f_{U_\infty}=0$, so $J(U_\infty,\lambda)=\frac{1}{2}\E[|Y|^2]$ saturates the trivial upper bound, or equivalently $\E[Y|U_\infty X  ]=0$ by \cite[Lem. 2.3]{RuanLi}; this contradiction shows the RHS is strictly positive. As a caveat, we allow that $\int_{V^*} \omega_1^2  \frac{ |\hat{f}_U|^2}{k_V} d\omega $ may be infinite.

		By \cite[Corollary 2.9]{RuanLi}, the minimizer function $f_{U_i}\to f_{U_\infty}$ strongly in the $\mathcal{H}$-norm. Recall that $\norm{f}_{\mathcal{H}}^2= \int_{V^*}  \frac{ |\hat{f}_U|^2}{k_V} d\omega$ is a weighted $L^2$-norm on the Fourier transform. Thus for any $R>0$, we have
		\[
		\liminf \int_{ V^*}  \omega_1^2  \frac{ |\hat{f}_{U_i}|^2}{k_V} d\omega\geq 	\lim_{i\to +\infty}	\int_{ |\omega|\leq R}  \omega_1^2  \frac{ |\hat{f}_{U_i}|^2}{k_V} d\omega = \int_{ |\omega|\leq R}  \omega_1^2  \frac{ |\hat{f}_{U_\infty}|^2}{k_V} d\omega
		\]
		Since this holds for all $R>0$, the claim follows.
	\end{proof}

	\subsection{Gaussian noise: multiple variable case}\label{sect:Gaussianmultiple}

	We now generalize the situation to multiple Gaussian variables, as in the setting of Theorem \ref{thm:montonicity}. We suppose $V$ is given a direct sum decomposition into $W\oplus W'$, so the random variable $X$ has two components $(X_W, X_{W'})\in W\oplus W'$. We assume that $X_W$ is a \emph{Gaussian vector} $X_W\sim N(\E[X_W], \Cov(X_W)) $  independent of $X_{W'}$, and the random variable $\E[Y|X]$ is a function of $X_{W'}$ only, and $\E[|Y|^2]<+\infty$. The distribution law of $X_{W'}$ is arbitrary, subject to the requirement that $\E|X'|^2<+\infty$. We choose $|\cdot|_V^2$ to be defined by the inverse covariance matrix. By the independence of $X_W, X_{W'}$,  the decomposition $W\oplus W'$ is \emph{orthogonal}.

	The key advantage of \emph{rotational invariance} of the kernel is that the minimization problem $J(U,\lambda)$ depends only on $\Sigma=U^T U$, so we will use the flexibility to \emph{choose the factorization} $\Sigma=U^TU$ to our advantage.

	\begin{thm}\label{thm:Gaussiannoisemonotonicity}
		(Monotonicity formula in the flow) In the setting of Theorem \ref{thm:longtimeexistence},
		given any unit vector $w\in W$ with respect to $|\cdot|_V^2$, then along the Riemannian gradient flow, we have
		\begin{equation}\label{eqn:monotonicity}
			\frac{d}{dt}|w|_{\Sigma(t)}^2 \leq - 2\pi^2\lambda|w|_{\Sigma(t)}^2 \int_{V^*} \omega_1^2 \frac{ |\hat{f}_U|^2}{k_V} d\omega\leq 0.
		\end{equation}
		In particular, if the initial data $\Sigma(0)$ satisfies $\mathcal{J}(\Sigma(0);\lambda)<\frac{1}{2}\E[|Y|^2]$, and $\Sigma(t)$ stays in a compact region of $Sym^2_{\geq 0}$, then $|w|_{\Sigma(t)}$ decays exponentially in $t$. 
		Consequently, any stationary point $\Sigma\in Sym^2_{\geq 0}$ satisfies $|w|_\Sigma=0$, or else $\E[Y|UX]=0$ (or equivalently $J(U,\lambda)=\frac{1}{2}\E[|Y|^2]$).
		
	\end{thm}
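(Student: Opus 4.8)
The plan is to derive the multi-variable monotonicity from the one-dimensional analysis of Section~\ref{sect:Gaussiansingle}, by choosing, at each time and for each fixed direction $w$, an adapted orthogonal splitting of $V$ together with an adapted factorization $\Sigma(t)=U^TU$; this is exactly the flexibility in the choice of factorization that rotational invariance of the kernel provides. We may assume $\E[X]=0$, since translating $X$ changes neither $\mathcal{J}$ nor the metric $\mathfrak{g}$, hence not the flow.

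\textbf{Step 1 (reduction to the single-variable setting).} Fix a unit vector $w\in W$ and put $V_1=\R w$ and $V_2=V_1^{\perp}$, the orthogonal complement for $|\cdot|_V^2=\Cov(X)^{-1}$. I claim $(V_1,V_2)$ meets the hypotheses of Section~\ref{sect:Gaussiansingle}. Since $X_W$ and $X_{W'}$ are independent and $W\perp W'$, the restriction of $|\cdot|_V^2$ to $W$ equals $\Cov(X_W)^{-1}$; whitening $X_W$ exhibits it as a standard Gaussian, so its component $X_1$ along the unit vector $w$ is $N(0,1)$ and independent of the complementary component of $X_W$ and of $X_{W'}$, hence independent of the $V_2$-component $X'$ of $X$; in particular $\alpha^2=1$. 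Also $\E[Y|X]$ is $\sigma(X_{W'})$-measurable and $X_{W'}$ is a linear function of $X'$, so $\E[Y|X]$ is a function of $X'$. Therefore Corollary~\ref{cor:Gaussianonevariable} and Lemma~\ref{lem:expdecayrate} apply to any $U$ that is block-triangular with respect to $V_1\oplus V_2$.

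\textbf{Step 2 (the time derivative as a one-sided directional derivative).} At a time $t$ with $\Sigma(t)\in Sym^2_+$, write $\Sigma(t)=U^TU$ with $U$ in $\mathcal{U}=\{U\in\End(V):U(V_1)\subset V_1\}$, e.g.\ the Cholesky factor for an ordered orthonormal basis $e_1=w,e_2,\dots,e_d$ of $|\cdot|_V^2$, which depends smoothly on $\Sigma(t)$. Since $V_1$ is one-dimensional and $|w|_V=1$, one has $\Tr_{V_1}\Sigma(t)=|w|_{\Sigma(t)}^2$ and $U_{11}^2=|Uw|_V^2=|w|_{\Sigma(t)}^2$. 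The time-evolution formula (Proposition~\ref{lem:timeevolution}) gives $\frac{d}{dt}|w|_{\Sigma(t)}^2=-\langle DJ(U,\lambda),\dot U\rangle$, where $\dot U$ is half the Euler vector field on $\End(V_1)\cong\R$. The curve $U_s$ of Section~\ref{sect:Gaussiansingle} (scale $U_{11}\mapsto\sqrt{1-s}\,U_{11}$, fix the other blocks) satisfies $\frac{d}{ds}\big|_{s=0}U_s=-\dot U$, and since $J(U,\lambda)=\mathcal{J}(U^TU;\lambda)$ is $C^1$ in $U$ (Proposition~\ref{prop:continuousextensionJ}),
\[
\lim_{s\to0^+}\frac{J(U_s,\lambda)-J(U,\lambda)}{s/2}=-2\langle DJ(U,\lambda),\dot U\rangle=2\,\frac{d}{dt}|w|_{\Sigma(t)}^2 .
\]
Combining this with Corollary~\ref{cor:Gaussianonevariable} (with $\alpha^2=1$ and $U_{11}^2=|w|_{\Sigma(t)}^2$) gives exactly (\ref{eqn:monotonicity}). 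When $\Sigma(t)$ lies in a boundary stratum the same identity and inequality hold by the continuity argument already used in Proposition~\ref{lem:timeevolution}, taking $U\in\mathcal{U}$ via Gram--Schmidt.

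\textbf{Step 3 (consequences and the main difficulty).} If $\mathcal{J}(\Sigma(0);\lambda)<\tfrac12\E[|Y|^2]$ and $\Sigma(t)$ stays in a compact subset of $Sym^2_{\geq0}$, then monotonicity of $\mathcal{J}$ along the flow keeps $J(U(t),\lambda)<\tfrac12\E[|Y|^2]$, so Lemma~\ref{lem:expdecayrate} bounds $\int_{V^*}\omega_1^2|\hat f_{U(t)}|^2/k_V\,d\omega$ below by a constant $c_0>0$, and Gr\"onwall's inequality yields $|w|_{\Sigma(t)}^2\le|w|_{\Sigma(0)}^2\,e^{-2\pi^2\lambda c_0 t}$. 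At a stationary point $\Sigma_0$, the gradient formula together with the vanishing of $D\mathcal{J}(\Sigma_0)$ on the stratum forces $\Sigma_0\,D\mathcal{J}(\Sigma_0)=0$, hence $\nabla^{\mathfrak g}\mathcal{J}(\Sigma_0)=0$ and the flow through $\Sigma_0$ is constant, so $\frac{d}{dt}|w|_{\Sigma(t)}^2=0$; then (\ref{eqn:monotonicity}) forces $|w|_{\Sigma_0}^2\int_{V^*}\omega_1^2|\hat f_U|^2/k_V\,d\omega=0$, so either $|w|_{\Sigma_0}=0$ or (as $k_V>0$) $\hat f_U\equiv0$, i.e.\ $f_U=0$ and $J(U,\lambda)=\tfrac12\E[|Y|^2]$, equivalently $\E[Y|UX]=0$. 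The substantive point is Step~1: once one sees that isolating one direction $w$ inside the joint-Gaussian block and orthogonalizing against $\Cov(X)^{-1}$ reproduces the one-variable hypotheses verbatim, the contraction property of the Gaussian convolution semigroup $H_s$ does all the work. The remaining effort --- checking $\alpha=1$, the identity $U_{11}^2=|w|_\Sigma^2$, the factor $\tfrac12$ relating $\dot U$ to $\tfrac{d}{ds}U_s$, and the continuous (including boundary) dependence of the factorization $\Sigma(t)=U(t)^TU(t)$ --- is routine bookkeeping.
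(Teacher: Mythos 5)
Your proposal is correct and follows essentially the same route as the paper's own proof: the reduction to the single-variable setting of Section~\ref{sect:Gaussiansingle} via the orthogonal splitting $V=\R w\oplus(W_1\oplus W')$ and an upper-triangular (Gram--Schmidt) factorization $\Sigma=U^TU$, the identification of $\frac{d}{dt}|w|_{\Sigma(t)}^2$ with the directional derivative along half the Euler vector field via Proposition~\ref{lem:timeevolution}, and then Corollary~\ref{cor:Gaussianonevariable} and Lemma~\ref{lem:expdecayrate}, with the constants ($\alpha^2=1$, $U_{11}^2=|w|_{\Sigma}^2$, the factor $1/2$ between $\dot U$ and $\frac{d}{ds}U_s$) checked exactly as in the paper. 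Your explicit handling of the stationary-point consequence (deducing $\Sigma_0 D\mathcal{J}(\Sigma_0)=0$ so the flow through $\Sigma_0$ is constant, then reading off the dichotomy from \eqref{eqn:monotonicity}) fills in a detail the paper leaves implicit, but is not a different approach.
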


	\begin{proof}
		Given the unit vector $w\in W$, we can choose its orthogonal complement $W_1$ in $W$ with respect to $|\cdot|_V^2$. This induces an orthogonal decomposition
		\[
		V=W\oplus W'= \R w\oplus W_1 \oplus W'= \R w\oplus  (W_1 \oplus W').
		\]
		The random variable $X$ can be decomposed into the $\R$-component $X_1$, which is by definition the coefficient of $w$, and the $(W_1 \oplus W')$-component $X'$. 
		By assumption $X_1$ is independent of $X_{W'}$, and by the covariance matrix construction and the Gaussian property, $X_1$ is also independent of the component of $X'$ in $W_1$. Since the RKHS is translation invariant, we may assume $\E[X]=0$.

		Given any $\Sigma\in Sym^2_{\geq 0}$, by the Gram-Schmidt process we can choose $U$ to be an upper triangular matrix with respect to the direct sum decomposition $\R\oplus  (W_1 \oplus W')$, such that $\Sigma=U^T U$. (This works even if $\Sigma$ is only semi-positive.) The upshot is that we have \emph{reduced the multi-variable case to the single variable case}. It remains to reinterpret the conclusions of Section \ref{sect:Gaussiansingle}.

		By Proposition \ref{lem:timeevolution}, the time derivative
		\[
		\frac{d}{dt}|w|_{\Sigma(t)}^2 =- \langle DJ(U,\lambda), \dot{U} \rangle,
		\]
		which is simply the directional derivative of $J(U,\lambda)$ with respect to the tangent vector $\frac{1}{2}U_{11}\partial_{U_{11}}$. 
		But the LHS expression in Corollary \ref{cor:Gaussianonevariable} is precisely
		\[
		\lim_{s\to 0} \frac{J(U_s, \lambda)-J(U,\lambda)}{s/2} = - \langle DJ(U,\lambda), U_{11}\partial_{U_{11}} \rangle= 2\frac{d}{dt}|w|_{\Sigma(t)}^2, 
		\]
		so Corollary \ref{cor:Gaussianonevariable} implies
		\[
		\frac{d}{dt}|w|_{\Sigma(t)}^2\leq -2\pi^2  U_{11}^2 \Cov(X_1,X_1)\lambda \int_{V^*} \omega_1^2  \frac{ |\hat{f}_U|^2}{k_V} d\omega\leq 0.
		\]

		Finally, since $w$ is a unit vector with respect to the inverse covariance matrix $|\cdot|_V^2$, the factor
		$
		\Cov(X_1, X_1)= \frac{1}{ |w|_V^2 }=1.$ Since $\Sigma = U^T U$, and $U$ is upper triangular, we have  
		\begin{equation*}
			|w|_\Sigma^2 = |Uw|_V^2 = U_{11}^2 |w|_V^2 = U_{11}^2. 
		\end{equation*}
		Thus $U_{11}^2 \text{Cov}(X_1,X_1)= |w|_\Sigma^2$, hence (\ref{eqn:monotonicity}).

		If $\Sigma(t)$ stays in a compact region, then so does the corresponding $U$ satisfying $\Sigma= U^T U$.  By the monotone decreasing of $\mathcal{J}(\Sigma;\lambda)$ in time, the condition $J(U,\lambda)<\frac{1}{2}\E[|Y|^2]$ is preserved by the flow. 
		The exponential decay statement then follows from Lemma \ref{lem:expdecayrate}.
	\end{proof}

	\section*{Postscript: comparison with 2-layer networks}

	\subsection*{Analogy with 2-layer neural networks}

	The kernel learning problem fits into a family of algorithms known as composition models (\ie involving composition of functions), of which the 2-layer neural network is the best known example.

	We recall the setup of the \emph{2-layer neural network} \cite{Bach}.  Let $X$ be a random variable valued in a $d$-dimensional vector space $V$, and let $Y$ be a complex valued random variable with $\E[|Y|^2]<+\infty$. Let $\sigma: \R\to \R$ be a fixed choice of `activation function', such as the ReLU function. Let $N\gg 1$ be a large integer, known as the width of the network. We take the parameters $\underline{\theta}=(a_i, w_i, b_i)_{i=1}^N$, where $a_i\in \C, w_i\in V^*, b_i\in \R$. We can write
	\[
	F: V\to \C,\quad	F(x; \underline{\theta} )= \sum_1^N a_i \sigma( \langle w_i, x\rangle + b_i).
	\]
	The goal is to 
	dynamically update the choice of $a_i, w_i, b_i$, so that the random variable $F(X; \underline{\theta})$ best approximates $Y$, in a manner which is generalizable to unseen data sets. Modulo implementation details, the algorithm is the following:

	\begin{itemize}
		\item Define the \emph{loss function} 
		\[
		L(X; \underline{\theta},\lambda )= \frac{1}{2} | Y- F(X; \underline{\theta})|^2 + \frac{\lambda}{2} \mathcal{R}(\underline{\theta}).
		\]
		The first term here measures how well $F(X;\underline{\theta})$ approximates $Y$, and the second term is designed for \emph{regularization effects}, which ensures good generalization behavior to new data sets. Here $\lambda>0$ is a small parameter, and $\mathcal{R}$ is a function of $\underline{\theta}$, typically chosen to be some $l^2$ or $l^1$ type norm function in the parameters $a_i, w_i, b_i$, such as $\sum |a_i|^2$.

		\item  Upon the \emph{choice of a  metric} in the space of parameters $\underline{\theta}$, one can compute the \emph{partial gradient} of the loss function in the $\underline{\theta}$ variable $\nabla_{\underline{\theta}} L(X;\underline{\theta},\lambda) $.

		\item We sample from the training data set, to compute the empirical expectation $\E_{emp}(  \nabla_{\underline{\theta}} L(X;\underline{\theta})  )$. We then update the parameters $\underline{\theta}$ by the \emph{discretized gradient descent}
		\[
		\underline{\theta}\to \underline{\theta}- \tau \E_{emp}(  \nabla_{\underline{\theta}} L(X;\underline{\theta},\lambda)  )
		\]
		where $\tau>0$ is a small time step parameter. We \emph{iterate} this algorithm until the loss function reaches an approximate \emph{local minimum}, a process known as \emph{training}. Often in practice, one performs the stochastic gradient descent instead, which requires a smaller number of data samples, and is computationally faster.
		
	\end{itemize}

	We can now draw the analogy between our kernel learning problem and the 2-layer neural network. 
	
	\begin{enumerate}
		\item The \emph{loss function} $I(f,U,\lambda)$ is the analogue of $\E[L(X; \underline{\theta},\lambda)]$. Here we are separating the parameters $\underline{\theta}$ into two parts. The \emph{outer layer} coefficient parameters $a_i$ are the analogue of the function $f$, in the sense that the loss function depends \emph{quadratically} on $a_i$ (resp. $f$).
		The \emph{inner layer} parameters $w_i\in V^*$ are analogous to $U\in \End{V}\simeq V^*\otimes V$. The reason that $b_i$ does not have an explicit analogue in our kernel learning problem, is that our RKHS is translation invariant, so a translation parameter can be eliminated by the replacement $f(\cdot)\to f(\cdot-c)$. The dimension for the linear span of $w_i$ is analogous to the rank of  $U$.

		\item   To go further we need to make some simplifications. First, we will ignore the difference between the \emph{expectation}  vs. finite sample \emph{empirical expectation}. Much progress has been made in the statistics literature~\cite{DevroyeGyLugosi, GyorfiKoKrWa}.

		Second, we will ignore the effect of \emph{time discretization}, and treat the discretized gradient descent as a gradient flow. This is actively studied in the optimization literature~\cite{HelmkeMoore, KushnerYin}.

		Third, we will separate the training process in the 2-layer neural network into two steps. For fixed choice of $\theta=(w_i, b_i)$, the training on the outer layer parameters $a_i$  is just linear regression, for which very fast algorithms are available. We will thus first minimize $\E[L(X; \underline{\theta} ,\lambda   )]$ with respect to $a_i$ while fixing $\theta=(w_i, b_i)$, and compute 
		\[
		J_{NN}(\theta, \lambda)= \min_{a_i} \E[  L(X; \underline{\theta}=(a_i, w_i,b_i) ,\lambda   )         ]
		\]
		This $J_{NN}(\theta,\lambda)$ is a function of the inner layer parameters only, and this is the analogue of $J(U,\lambda)$ in the kernel learning problem. Unlike the outer layer, the function $J_{NN}$ (resp. $J(U,\lambda))$ is generally speaking highly nonlinear.

		\item  Now the 2-layer neural network \emph{training dynamics} becomes the adiabatic motion of the parameters $\theta$:
		\begin{equation}\label{eqn:gradflowNN}
			\frac{d\theta }{dt}= - \nabla_\theta J_{NN} (\theta, \lambda).
		\end{equation}
		This is a \emph{gradient flow on the parameter space} for $\theta$, which is $\bigoplus_1^N (V^*\oplus \R)$.
		As an issue not often emphasized in the neural network literature,  to identify the differential $DJ_{NN}$ as a gradient vector field $\nabla J_{NN}$, one has to make a choice of a Riemannian metric on the parameter space, and this choice is part of the design of the gradient descent algorithm \cite{AbsilMaSe08, Boumal20}.

		In practice, one usually choose a \emph{Euclidean metric} on the vector space of parameters. Furthermore, a common technique known as \emph{batch normalization}~\cite{IoffeSzegedy}, morally means that the choice of this metric should be specified by the \emph{inverse covariance matrix} of the random variable $X$, namely $Cov(X)^{-1}=C_{ij}dx^idx^j$. \footnote{In coordinates, the differential of a function is $df=\partial_i fdx^i$, while the gradient vector field is $\nabla f= g^{ij} \partial_i f \frac{\partial}{\partial x^j}$, which requires choosing a metric tensor $g^{ij}$. Since in practice people usually work inside some $\R^N$, the standard Euclidean metric is often taken for granted. Thus the process of normalization on $X$ by applying a linear change of coordinate, can be interpreted as a choice of the Euclidean metric on $V$.}

		Here is the main point of the analogy. The gradient flow (\ref{eqn:gradflowNN}) for the neural network  is analogous to the gradient flow on the vector space $\End(V)$, namely the parameter space of $U$:
		\[
		\frac{dU}{dt}   = -  \nabla_U J(U,  \lambda).
		\]
		Again this gradient requires a choice of the metric. The upshot of this paper is that the  \emph{Euclidean gradient flow} (\ref{eqn:Euclideanflow}) on $U\in \End{V}$ with respect to the functional $J(U,\lambda)$, which corresponds to the \emph{Riemannian gradient flow} (\ref{eqn:Riemannianmetric}) under the map $U\mapsto \Sigma=U^TU$, is the \emph{canonical choice} that satisfies the remarkable properties of noise reduction (Theorem \ref{thm:montonicity}) and  preservation of rank stratification (Theorem \ref{thm:longtimeexistence}). The choice of metric is precisely the Euclidean metric on $\End(V)$ induced by the inverse covariance matrix $C_{ij}dx^idx^j$ on $V$.

		No such results seem to have been proven on the neural network side of the analogy.

		\item  On the negative side of the analogy, we point out that 2-layer neural network is \emph{not} the same algorithm as our kernel learning problem.  The main differences lie in how the functions are represented ($F(X;\underline{\theta})$ vs. $f(UX)$ for some $f\in \mathcal{H}$), and the choice of regularization terms ($\mathcal{R}(\underline{\theta})$ vs. $\norm{f}_{\mathcal{H}}^2$).

	\end{enumerate}

	\subsection*{Some major problems in 2-layer neural networks}

	While the huge literature on 2-layer neural networks have made substantial progress on the topics of approximation theory (e.g., \cite{Cybenko, Barron}, \cite[Chapter 4]{Bach}), the gradient descent algorithm (e.g., \cite{BachChizat}, \cite[Chapter 5]{Bach}), the infinite width limit (e.g., \cite{RotskoffVandenEijnden, MeiMontanariNguyen}) etc, the following central questions are still largely open.

	We refer to questions concerning stationary points of the loss function as \emph{static}, and questions related to the training process as \emph{dynamic}.

	\begin{Question}
		(Landscape)
		Can we describe the landscape of the stationary points of the expected loss function  $\E[ L(X; \underline{\theta}, \lambda)  ]$?
	\end{Question}

	This is a very difficult question, since in general the functional $\E[ L(X; \underline{\theta},\lambda)  ]$ is a highly \emph{nonconvex} function in its parameters $\underline{\theta}$. A notable exception is the infinite width limit, where the loss functional becomes convex \cite{BachChizat}.

	In general one expects that there may be very many stationary points. But not all of them are equally significant: if we encounter \emph{saddle points} or \emph{shallow local minima}, then the gradient descent algorithm will avoid them after small perturbation, so we do not expect them to show up as infinite time limit of the training process~\cite{Pemantle90}. The upshot is that the significant stationary points should be \emph{deep local minima}.

	\begin{Question}
		(Interpretability)  What kind of information do the deep local minima signify? How far can humans interpret this information, and how closely does the algorithm reflect human thinking process?
	\end{Question}

	This question is inherently interdisciplinary, but mathematics is an important perspective.

	\begin{Question}
		(Dynamics)
		What happens to the long time behavior of the training process? Under what conditions can we guarantee convergence to some stationary point? How does the algorithm improve the parameters $\underline{\theta}$, beyond the monotonicity of the loss function?
	\end{Question}

	Apart from the infinite width limit, the known results usually assumes very specific distributions of $X$, and $Y$ is a very specific function of $X$ (See the survey in \cite[Appendix B]{MisiakiewiczMo23}).

	\subsection*{A brief recap of results and insights}

	The above three questions can be asked verbatim for the kernel learning problem. The companion paper \cite{RuanLi} is devoted to the static questions, while this paper makes progress on the dynamical questions. We now make an informal survey of some results therein.

	To address the \textbf{landscape question}, we need to first understand how $J(U,\lambda)$ depends on $U$, or equivalently how $\mathcal{J}(\Sigma;\lambda)$ depends on $\Sigma$. There are two basic ways a sequence of $\Sigma\in Sym^2_+$ can degenerate, and we need to know the \emph{limiting behavior} of $\mathcal{J}(\Sigma;\lambda)$; the mathematical framework is known as the \emph{partial compactification} of $Sym^2_+$.
	\begin{itemize}
		\item The sequence of $\Sigma$ can tend to a boundary point of $Sym^2_{\geq 0}$. In \cite[Section 4]{RuanLi}, we deal with the question of \emph{continuous extension} of the functional $\mathcal{J}(\Sigma;\lambda)$ and its first variation $D\mathcal{J}$. This information is essential for defining the Riemannian gradient flow in this paper.

		\item Or $\Sigma$ can tend to infinity. We make an intriguing observation in \cite[Section 3]{RuanLi} that the limiting behavior of $J(U,\lambda)$ is \emph{sensitive to whether the marginal distribution for $X$ along a linear projection has any discrete structure}.
	\end{itemize}

	We then study the local minima of $\mathcal{J}(\Sigma;\lambda)$, refered to as \emph{vacua} (=energy local minima in physics) in \cite{RuanLi}. In \cite[Section 5]{RuanLi}, we give criterion to test whether a boundary stationary point $\Sigma$ is locally minimizing. From a more applied perspective, local minima $\Sigma$ on the boundary strata of $Sym^2_{\geq 0}$ means that $U$ has \emph{lower rank} than $d=\dim V$, so that $f(UX)$ depends only on a linear projection of $X$, a phenomenon known as \emph{variable selection}(=\emph{feature learning}), and is useful for compressing high dimensional data into a smaller number of degrees of freedoms.

	In \cite[Section 6, 7]{RuanLi}  we develop more tools to analyze what happens to $\mathcal{J}(\Sigma;\lambda)$ when the distribution of $X$ is the superposition of a number of \emph{clusters}, and each cluster has some characteristic \emph{scale parameter}, that measures the local concentration of probability density. In the presence of many scale parameters, or far separated clusters, we exhibit examples where $\mathcal{J}(\Sigma;\lambda)$ have \emph{many deep vacua}, a phenomenon  opposite to what happens for convex functionals. Furthermore, under some additional conditions, one can classify all the deep vacua $\Sigma$ up to uniform equivalence, and show that these vacua precisely \emph{detect the scale parameters of the individual clusters}.

	This goes some way towards the \textbf{interpretability question}, and to explain this we will make some philosophical digression. Humans perceive two kinds of information: \emph{qualitative/conceptual information} is modelled on \emph{discrete random variables}, and \emph{quantitative infomation} is modelled on the \emph{continuous random variables}. In a world of imprecise observations, an underlying discrete structure is manifested in the \emph{concentration} of probability density, as in the case of approximate delta functions. Human learning is largely about the following tasks:
	\begin{enumerate}
		
		\item Understand the \emph{essential degrees of freedom} that most strongly affects the prediction of $Y$. This is the variable selection task.
		
		\item  Extract the qualitative information, which means uncovering an underlying discrete structure in data, that may not be presented to us in an a priori explicit way. The presence of many \emph{clusters} can be viewed a \emph{discrete classification} of the learning problem into several cases.
		
		\item  Understand the \emph{scale parameters}, which determines the domain of validity for the distinct subcases of the discrete classification. 
	\end{enumerate}

	This discussion suggests that the problem of finding the vacua of $\mathcal{J}(\Sigma;\lambda)$ agrees with key aspects of human learning. The main limitation of our kernel learning algorithm, is that we need to assume clustering for the marginal probability density of \emph{linear projections} of $X$ in \cite{RuanLi}, or morally speaking, \emph{the feature variables are linear projections of $X$}. In the real world data, one expects that feature variables are highly nonlinear functions of $X$. This limitation is an inherent flaw for any 2-layer composition model; how to adapt the algorithm to emulate deeper neural networks is a very interesting and open question.

	To find these vacua of $\mathcal{J}(\Sigma;\lambda)$, one needs to address the \textbf{dynamics question}, and this paper makes partial progress towards this. At a qualitative level, Theorem \ref{thm:longtimeexistence}, \ref{thm:convergence} tell us that the algorithm can be run for a long time, without finite time divergence to infinity or losing the postive-semidefiniteness of $\Sigma$, and under mild conditions will yield a \emph{stationary point}. The more surprising aspects are the exponential decay of Gaussian noise (Theorem \ref{thm:Gaussiannoisemonotonicity}), and the preservation of rank stratification (Theorem \ref{thm:longtimeexistence}) on $Sym^2_{\geq 0}$. These results are surprising in the sense that no parametric assumption is made about the feature variables in $X$, or how $Y$ depends on these variables, and in particular $\mathcal{J}(\Sigma;\lambda)$ can be highly non-convex. Our result says that not only the functional $\mathcal{J}(\Sigma;\lambda)$ decreases, which holds by fiat, but \emph{a continuous family of functionals decrease in time simultaneously along the  gradient flow.}

	Finally, back to the elephant in the room: what does this mean for 2-layer neural networks?
	The majority of these results above have no known analogue that is proven for 2-layer neural networks at any degree of generality. There are two attitudes one may take, and we leave this for the reader to decide:
	\begin{enumerate}
		\item The kernel learning problem is an easier toy model of 2-layer neural networks, and one should aim to prove the analogous results in the neural network case.

		\item The kernel learning problem enjoys some very special mathematical properties that have no analogue among similar models, but neural network has much better developed numerical implementation schemes, that could be adapted to solve the kernel learning problem efficiently\footnote{In the kernel learning model $f(UX)$, one can approximate the function $f\in \mathcal{H}$ via a random feature expansion~\cite{RahimiRecht}, expressing it as a linear combination of random nonlinear features. This approximation resembles a single hidden layer in a neural network, thereby enabling neural network architectures as efficient numerical schemes for kernel-learning models.}.

	\end{enumerate}

	\subsection*{Open problems}

	We list some open questions which are important for theoretical or numerical aspects of the kernel learning problem.

	\begin{itemize}
		\item  In this paper we have focused only on the result at the level of the expectation, while in practice one only has access to the empirical distribution for finite samples. How many samples do we need to take to ensure that the same results holds with high probability?

		\item  While the definition of our Riemannian gradient flow is completely canonical, any discretized numerical scheme for the gradient flow involves additional choices, such as the time step choice. What are the optimal choices of these parameters?

		\item  When the dimension $d$ is high, then $\Sigma$ has $O(d^2)$ entries, and a priori is not close to being low rank. How can one make the algorithm computationally efficient?

		\item 
		
		If we want to find all the deep vacua of $\mathcal{J}(\Sigma;\lambda)$, what is the optimal way to initialize the choice of $\Sigma$?

		\item  Is there any way to generalize our kernel learning problem to emulate the deep neural networks?

	\end{itemize}

	\begin{Acknowledgement}
		Yang Li thanks C. Letrouit, B. Geshkovski and E. Cornacchia for discussions related to machine learning. Feng Ruan thanks K. Liu for discussions related to kernel methods and their relevance to contemporary machine learning.
	\end{Acknowledgement}


\begin{thebibliography}{99}
		\bibitem{AbsilMaSe08}
		P.-A.~Absil, R.~Mahony, and R.~Sepulchre.
		\newblock \textit{Optimization Algorithms on Matrix Manifolds}.
		\newblock Princeton University Press, 2008.
		
		
		\bibitem{Aronszajn} N. Aronszajn, \textit{Theory of reproducing kernels}, Trans. Amer. Math. Soc. \textbf{68} (1950), 337--404.
		
		\bibitem{Bach} F. Bach, \textit{Learning Theory from First Principles}, MIT Press, 2024.
		
		\bibitem{BachChizat} F. Bach and L. Chizat, \textit{Gradient descent on infinitely wide neural networks: global convergence and generalization}, in \textit{Proceedings of the International Congress of Mathematicians (ICM)}, Vol.~7, Sections 15--20, EMS Press, Berlin, 2023, pp.~5398--5419.
		
		\bibitem{Barron} A. R. Barron, \textit{Universal approximation bounds for superpositions of a sigmoidal function}, IEEE Trans. Inf. Theory \textbf{39} (1993), no.~3, 930--945.
		
		\bibitem{Boumal20}
		N.~Boumal.
		\newblock \textit{An Introduction to Optimization on Smooth Manifolds}.
		\newblock Cambridge University Press, 2023.
		
		
		
		\bibitem{CLLR} Y. Chen, Y. Li, K. Liu, and F. Ruan, \textit{Layered models can ``automatically'' regularize and discover low-dimensional structures via feature learning}, arXiv preprint arXiv:2310.11736, 2023.
		
		\bibitem{Smale} F. Cucker and S. Smale, \textit{On the mathematical foundations of learning}, Bull. Amer. Math. Soc. (N.S.) \textbf{39} (2002), no.~1, 1--49.
		
		\bibitem{Cybenko} G. Cybenko, \textit{Approximation by superpositions of a sigmoidal function}, Math. Control Signals Syst. \textbf{2} (1989), no.~4, 303--314.
		
		\bibitem{DevroyeGyLugosi} L. Devroye, L. Györfi, and G. Lugosi, \textit{A Probabilistic Theory of Pattern Recognition}, Springer, 1996.
		
		\bibitem{GyorfiKoKrWa} L. Györfi, M. Kohler, A. Krzyzak, and H. Walk, \textit{A Distribution-Free Theory of Nonparametric Regression}, Springer, 2006.
		
		\bibitem{HelmkeMoore} U. Helmke and J. B. Moore, \textit{Optimization and Dynamical Systems}, Springer, 2012.
		
		\bibitem{IoffeSzegedy} S. Ioffe and C. Szegedy, \textit{Batch normalization: Accelerating deep network training by reducing internal covariate shift}, in \textit{Proceedings of the International Conference on Machine Learning (ICML)}, 2015, pp.~448--456.
		
		\bibitem{KushnerYin} H. J. Kushner and G. G. Yin, \textit{Stochastic Approximation and Recursive Algorithms and Applications}, 2nd ed., Springer, 2003.
		
		
		\bibitem{RuanLi} Y. Li and F. Ruan, \textit{On the kernel learning problem}, arXiv preprint arXiv:2502.11665, 2025.
		
		
		\bibitem{MeiMontanariNguyen} S. Mei, A. Montanari, and P.-M. Nguyen, \textit{A mean field view of the landscape of two-layer neural networks}, Proc. Natl. Acad. Sci. USA \textbf{115} (2018), no.~33, E7665--E7671.
		
		\bibitem{MisiakiewiczMo23} T. Misiakiewicz and A. Montanari, \textit{Six lectures on linearized neural networks}, J. Stat. Mech. Theory Exp. \textbf{2024} (2024), no.~10, 104006.
		
		\bibitem{Pemantle90} R. Pemantle, \textit{Nonconvergence to unstable points in urn models and stochastic approximations}, Ann. Probab. \textbf{18} (1990), no.~2, 698--712.
		
		
		\bibitem{RahimiRecht} A. Rahimi and B. Recht, \textit{Random features for large-scale kernel machines}, in \textit{Proc. NeurIPS}, 2007, pp.~1177--1184.
		
		
		\bibitem{RotskoffVandenEijnden} G. Rotskoff and E. Vanden-Eijnden, \textit{Trainability and accuracy of artificial neural networks: an interacting particle system approach}, Commun. Pure Appl. Math. \textbf{75} (2022), no.~9, 1889--1935.
		
		
		
		
	\end{thebibliography}
\end{document}